\newcommand{\R}{{\mathbb R}}
\newcommand{\T}{{\mathbb T}} 
\newcommand{\C}{{\mathbb C}}
\newcommand{\Lin}{\mathcal{L}}
 \renewcommand{\geq }{\geqslant}
 \renewcommand{\leq }{\leqslant}
\DeclarePairedDelimiter{\abs}{\lvert}{\rvert}
\DeclarePairedDelimiter{\norma}{\lVert}{\rVert} 
\lbrace\begin{array}{@{}l@{}}}%
\newcommand{\squad}[1]{{\left [ #1 \right ]}}
\newcommand{\round}[1]{{\left ( #1 \right )}}
\newcommand{\norm}[2]{{\left\| #1 \right\|}_{#2}}
\numberwithin{equation}{section}
\newtheorem{theorem}{Theorem}[section]
\newtheorem{corollary}[theorem]{Corollary}
\newtheorem{lemma}[theorem]{Lemma}
\newtheorem{proposition}[theorem]{Proposition}
\theoremstyle{definition} 
\newtheorem{definition}[theorem]{Definition}
\newtheorem{remark}[theorem]{Remark}
\begin{document} 

\title[$H^2$-scattering for HFC4]{Decay in energy space for the solution of fourth-order Hartree-Fock equations with general non-local interactions}


\author{M.~Tarulli}
\address{Department of Mathematical Analysis and Differential Equations, Technical University of Sofia, Kliment Ohridski Blvd. 8, 1000 Sofia, Bulgaria.
    Institute of Mathematics and Informatics at Bulgarian Academy of Science, Acad. Georgi Bonchev Str., Block 8, 1113 Sofia, Bulgaria. Dipartimento di Matematica, Universit\`a di Pisa,
Largo B. Pontecorvo 5, 56127 Pisa, Italy}
\email{mta@tu-sofia.bg}

\author{G.~Venkov}
\address{Department of Mathematical Analysis and Differential Equations, Technical University of Sofia, Kliment Ohridski Blvd. 8, 1000 Sofia.}
\email{gvenkov@tu-sofia.bg}

\subjclass[2010]{35J10, 35Q55, 35G50, 35P25.}
\keywords{Nonlinear fourth-order Schr\"odinger systems, high-order Hartree-Fock equation, high-order Choquard equations, scattering theory}


\begin{abstract}
We prove the decay in the energy space for the solution to the defocusing biharmonic Hartree-Fock equations with mass-supercritical and energy-subcritical Choquard-type nonlinearity in space dimension $d\geq3$. We treat both the free and the perturbed by a potential case. As a direct consequence, we obtain large-data scattering in  $H^2(\R^d)^N, \, N\geq 1$.
\end{abstract}

\maketitle

\section{Introduction}\label{sec:introduction}
The main purpose of the paper is the analysis of the decaying and scattering properties of the solution to the defocusing nonlinear fourth-order Hartree-Fock-Choquard equations (HFC4) in dimension $d \geq 3 $:

\begin{equation}\label{eq:HFC4}
\begin{cases}
i\partial_t u_j + \Delta^2 u_j -\sigma_1 \Delta u_j  +\sigma_2 V(x)  u_j +
\sum_{k=1}^N\mathcal F(u_j , u_k)=0,  \\ 
(u_j(0,\cdot))_{j=1}^N= (u_{j,0})_{j=1}^N \in H^2(\R^d)^N,
\end{cases}
\end{equation}
with $\sigma_1,\sigma_2=0,1$. We shall assume that
$V=V(x)$ is a nonnegative Schwartz  radial function such that 
\begin{equation}\label{eq.hopot0}
x \cdot \nabla V \leq 0
\end{equation}
and
\begin{equation}\label{eq.hopot}
\|V\|_{L^{\frac{d}{4}}}+\sup _{y \in \mathbb{R}^{d}} \int_{\mathbb{R}^{d}} \frac{V(x)}{|x-y|^{d-4}} d x < +\infty.
\end{equation}
The nonlinear term is given in the following general form
\begin{align}\label{eq.nonlinHF}
\mathcal F(u_j, u_k)
\\
=b_{jk} \squad{|x|^{-(d-\gamma_1)}*(|x|^{-\rho_{1}}| u_k|^p)}|x|^{-\rho_{1}} | u_j|^{p-2}  u_j
\nonumber\\
+b\round {\squad{|x|^{-(d-\gamma_2)}*|x|^{-\rho_{2}}| u_k|^2  } |x|^{-\rho_{2}}u_j-\squad{|x|^{-(d-\gamma_2)}* |x|^{-\rho_{2}}\overline u_k u_j  } |x|^{-\rho_{2}} u_k}.
\nonumber
\end{align}
Here, for all $j,k=1,\dots,N$, $u_j=u_j(t,x):\R\times\R^d\to\C$, $(u_j)_{j=1}^N=(u_1,\dots, u_N)$ and  $b, b_{jk}\geq0$
 are coupling constants. We require that the nonlinearity parameters $p$, $\gamma_{\kappa}$ and $\rho_{\kappa}$, $\kappa=1,2$, are $L^2$-$H^2$-intercritical, that is, when they satisfy the following conditions 
\begin{align}
\label{eq:bs}
\max (0, d-8)<\gamma_{\kappa}<d,\ \
 2\leq p<p^*(d), \ \  p^*(d)=
\begin{cases}
\infty \,  & \text{if} \ \  d=3,4, \\
\frac {d+\gamma_{1}+\rho_{1}}{d-4}  \,    &\text{if} \ \ d\geq5,
\end{cases}
\\
p> p_{1*}(d), \ \  p_{\kappa*}(d)=\frac{d+ \gamma_{\kappa}+4+\rho_{\kappa}}d,
\label{eq:bsII}
\\
0\leq  \rho_{\kappa}<\min\{n+\gamma_{\kappa},4(1+\gamma_{k}/d),8+\gamma_{\kappa}-d\},
\label{eq:bsIII}
\\
2\gamma_{\kappa}-4\rho_{\kappa}+d>0\quad \text{if} \quad 3\leq d\leq 4.
\label{eq:bsn}
\end{align}
We recall two important quantities linked to \eqref{eq:HFC4}. The mass
\begin{align}\label{eq.mass}
M(u_{j})(t)=\int_{\R^d}|u_{j}(t)|^2\,dx
\end{align}
and the energy  
\begin{equation}\label{eq:Ener}
\begin{split}
  E(u_{1},\dots,u_{N}) =  \sum_{j=1}^N\int_{\R^d}  \abs{\Delta{u_j}}^2\,dx+ \sigma_{1}\sum_{j=1}^N\int_{\R^d}  \abs{\nabla{u_j}}^2\,dx+ \sigma_{2} \sum_{j=1}^N\int_{\R^d} V(x) \abs{u_j}^2\,dx
  \\
 + \frac{1}{2p}\sum_{j,k=1 }^Nb_{jk} \int_{\R^d}\squad{|x|^{-(d-\gamma_1)}\ast
(|x|^{-\rho_{1}}|u_k|^{p})}|x|^{-\rho_{1}}|u_j|^{p} dx
\\
+\frac {b}2\sum_{j,k=1 }^N \int_{\R^d}\squad{|x|^{-(d-\gamma_2)}*( |x|^{-\rho_{2}}| u_k|^2  )} |x|^{-\rho_{2}}|u_j|^2\,dx
\\
-\frac {b}2\sum_{j,k=1 }^N \int_{\R^d} \squad{|x|^{-(d-\gamma_2)}*( |x|^{-\rho_{2}}\overline u_k u_j ) } |x|^{-\rho_{2}}u_k \overline u_j\, dx.
\end{split}
   \end{equation}
The Hartree-Fock system of $N$-particles is of fundamental interest from a physical point of view and plays an important role in several models of the mathematical physics. In fact, in the quantum mechanics it portrays the mean-field limit of large systems of bosons, \emph{i.e.} the so-called Bose-Einstein condensates, in view of the self-interactions of the charged particles. This scenario is considered, for example, in \cite{ES}, \cite{Len}, \cite{LR} and references therein. The Hartree-Fock equation was applied in \cite{Fo} in order to depict systems of charged fermions as well as an exchange term resulting from Pauli's principle and in \cite{Lip} to describe the fermions as an approximation of the equation, disregarding the effect of their fermionic nature. Furthermore, the Hartree-Fock-Choquard system, extensively treated in \cite{TarVenk}, ensured various other applications: in \cite{FrLe} for developing models of white dwarfs, for sketching an electron trapped in its own hole, as exhibited in \cite{ChS}, \cite{CSV} and in \cite{Pen}, for representing self-gravitating matter together with quantum entanglement and quantum information effects. On the other hand, the fourth-order Schr\"odinger equations were introduced in \cite{FIP} to portray dispersion in the propagation of intense laser beams in a medium with Kerr nonlinearity and successively employed in the theory of motion of a vortex filament in an incompressible fluid \cite{Kar}, \cite{KaS}, \cite{Se03} (see also \cite{HJ05} and \cite{HJ07}). It is natural, as a next step, to generalize the model to a higher order one, in our framework the HFC4 system \eqref{eq:HFC4}, given in a rather general way, which discusses various equations of motions. In addition, the second issue is to extend the scope of HFC4 to the short range-potentials. The relevant papers \cite{BJPSS} and \cite{BSS2} (see also the references inside) suggest also that it is necessary to bound the solutions in some $L^{2}$-based spaces, so to show that the evolution initial mixed states remains close to a (mixed) quasi-free state, evolving according to Hartree-Fock type equations, such as HFC4 systems. Influenced by this and by  \cite{CM}, \cite{CLM}, our main target is the analysis of the decay of the solutions to  \eqref{eq:HFC4} in the energy space, which means the decay of $L^r$-norms of the solutions, provided that $2<r\leq2d/(d-4)$, for $d\geq 5$, $2<r<\infty$ for $d=3,4$ and large-data in $H^2(\R^d)^N$. Namely, by carrying on the ideas initiated in \cite{Ta} and \cite{TarVenk} for systems of Schr\"odinger equations with local and non-local nonlinearities, we set up the Morawetz action, its tensor counterpart and get new bilinear Morawetz inequalities for \eqref{eq:HFC4}. A localized version of such inequalities and a contradiction argument imply the decay of $L^r$-norms of $(u_j(t,x))_{j=1}^N$, also when the HFC4 equation is lacking of translation invariance. The decay of the solution is a strong property of the HFC4 and quickly bears to the asymptotic completeness if one uses an appropriate reformulation of the theory in \cite{Ca}. We move now on the breakthroughs available in our paper. We underline that our results are new in the whole literature, not only for the HFC4 equations but also for the fourth-order Hartree-Fock equation ($b_{jk}=0$, for all $j,k=1, \dots, N$ in \eqref{eq:HFC4}).  
For what concerns the fourth-order Schr\"odinger-Choquard equation
\begin{equation}\label{eq:GenSchoq}
  \begin{cases}
i \partial_t u + (\Delta^{2} -\sigma_{1}\Delta+\sigma_{2}V)u =\lambda ({|x|^{-(d-\gamma_1)}}*
|x|^{-\rho_{1}}|u|^{p})|x|^{-\rho_{1}}|u|^{p - 2} u,\\
u(0,x)=u(x),
  \end{cases}
\end{equation}
with $\lambda<0$, that is \eqref{eq:HFC4}  when $N=1$, $\sigma_{1}=0$ and $\sigma_{2}=0$, we improve to the non-radial setting the various results available in \cite{Saa1}, \cite{Saa2}, including also $\sigma_{1}=1$, $d\geq 3$ and $\sigma_{2}>0$. Finally, we also enhance to the non-radial framework the scattering results in \cite{FWY}, because, as underlined in \cite{TarVenk}, our technique enables us to treat local and non-local terms in an unified way (see \cite{CT}). 
\\
\\
 Now, we state the first main result of this paper. Namely,

\begin{theorem}\label{thm:desl}
   Let be $2\leq p<p^*(d)$ and assume \eqref{eq.hopot0}, \eqref{eq.hopot}, \eqref{eq:bs}, \eqref{eq:bsIII}, \eqref{eq:bsn} being satisfied. Then for any $(u_{j,0})_{j=1}^N \in H^2(\R^d)^N,$ the unique global solution to \eqref{eq:HFC4}
  $(u_j(t,x))_{j=1}^N \in\mathcal C(\R, H^2(\R^d)^N)$, enjoys:
   \begin{align}\label{eq:desol0}
\lim_{t\rightarrow \pm \infty} \|u_j(t, \cdot)\|_{L^r(\R^d)}=0,
\end{align}
for all $j=1,\dots,N$, with $2<r<2d/(d-4),$ if $d\geq 5$ and $2<r<+\infty$ if $d=3, 4$, in the following cases
    \begin{itemize}
    \item[1)] if $d\geq 3$, $(b_{jj},\sigma_1)\neq(0,0),$ for all $j=1, \dots, N$, $(\rho_{1},\rho_{2})=(0,0)$ and $\sigma_2=0$;
    \item[2)] if $d\geq 5$, for  $(\rho_{1},\rho_{2}, \sigma_2)\neq(0,0,0)$.
    \end{itemize}
   \end{theorem}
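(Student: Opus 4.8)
The plan is to combine the conservation laws with a Morawetz-type monotonicity formula, tensorize it, and then close the argument by a localization-and-contradiction scheme in the spirit of \cite{Ta} and \cite{TarVenk}. First I would record the a priori control coming from the defocusing structure: since all the nonlocal kernels $|x|^{-(d-\gamma_\kappa)}$ are positive, the interaction part of the energy \eqref{eq:Ener} is nonnegative (the exchange term being dominated by the direct term by Cauchy--Schwarz inside the convolution), and $V\ge0$; hence mass and energy conservation furnish a uniform-in-time bound $\sup_t\sum_j\norm{u_j(t)}{H^2}\le C$. This is exactly the input that underlies global existence and that will feed the Morawetz estimates.

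Next I would introduce the Morawetz action attached to a convex radial weight $a=a(|x|)$ (ultimately $a(x)=|x|$, suitably smoothed), a quantity of the form $\mathcal M_a(t)=\sum_j\operatorname{Im}\dint_{\R^d}\overline{u_j}\,(\nabla a\cdot\nabla u_j)\,dx$ adapted to the biharmonic flow. Differentiating in $t$ and using \eqref{eq:HFC4}, the biharmonic term produces a quadratic form in $\nabla u_j,\Delta u_j$ weighted by derivatives of $a$, which by convexity of $a$ is sign-definite. The linear potential contributes an expression proportional to $\dint V\,(x\cdot\nabla)\abs{u_j}^2$ whose sign is fixed by hypothesis \eqref{eq.hopot0}, i.e. $x\cdot\nabla V\le0$, so it cooperates rather than competes. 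Finally the Choquard nonlinearity, being defocusing and built on positive kernels, yields a nonnegative contribution under the intercritical restrictions \eqref{eq:bs}, \eqref{eq:bsIII}, \eqref{eq:bsn}; the constraints on $\rho_\kappa,\gamma_\kappa$ in \eqref{eq:bsII}--\eqref{eq:bsn} are precisely what keeps every weighted nonlocal integral convergent and correctly signed. Integrating in time against the uniform bound on $\mathcal M_a$ then yields a global spacetime estimate.

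To convert this into information on $L^r$-norms I would tensorize the action on $\R^d\times\R^d$ with weight $a(x-y)=\abs{x-y}$, obtaining a bilinear Morawetz inequality controlling a spacetime integral of the density $\abs{u_j(t,x)}^2\abs{u_k(t,y)}^2$ against a singular weight in $\abs{x-y}$, together with the corresponding higher-order densities. A localized version, with $a$ replaced by a cutoff of $\abs{x-y}$ at scale $R$ and with the center allowed to vary, provides for each fixed ball a finite bound that does not see the loss of translation invariance caused by $V$: the monotonicity is inherited from the origin-centered estimate thanks to \eqref{eq.hopot0}. The two alternatives of the theorem correspond to which mechanism supplies the coercive bulk term: in case 1 the Laplacian ($\sigma_1=1$) or the pure $b_{jj}$-Choquard term ($\rho_1=\rho_2=0$), and in case 2, for $d\ge5$, the weighted terms with $(\rho_1,\rho_2,\sigma_2)\neq(0,0,0)$.

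Finally I would run a contradiction argument. If \eqref{eq:desol0} failed for some $j$, there would be $\eps>0$ and times $t_n\to\pm\infty$ with $\norm{u_j(t_n)}{L^r}\ge\eps$; using the equation and the uniform $H^2$ bound, the map $t\mapsto\norm{u_j(t)}{L^r}$ is uniformly continuous, so the lower bound persists on disjoint time intervals of fixed length. Gagliardo--Nirenberg together with the $H^2$ bound then forces a fixed amount of the localized bilinear density on each such interval, and summing over the infinitely many intervals contradicts the finiteness of the localized spacetime Morawetz bound. I expect the main obstacle to be the second step: deriving the biharmonic Morawetz identity with the correct sign while simultaneously taming the exchange term and all the weighted nonlocal contributions, since the fourth-order operator generates derivative terms of indefinite sign that must be absorbed by the convexity of $a$ and the parameter constraints \eqref{eq:bsII}--\eqref{eq:bsn}.
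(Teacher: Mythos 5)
Your overall architecture coincides with the paper's: uniform $H^2$ bounds from the conservation laws, a single-particle Morawetz action with convex weight, a tensorized action on $\R^{2d}$ with weight $|x-y|$, localized bilinear spacetime estimates, and a contradiction argument combining Gagliardo--Nirenberg localization with persistence of a lower bound on disjoint time intervals (your variant, via uniform continuity of $t\mapsto\|u_j(t)\|_{L^r}$ obtained by interpolating a Lipschitz-in-$H^{-2}$ bound against the uniform $H^2$ bound, is a legitimate reshuffling of the paper's localized-mass persistence step).

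There is, however, a genuine gap in your treatment of case 2, i.e. $d\geq 5$ with $(\rho_{1},\rho_{2},\sigma_2)\neq(0,0,0)$: you assert that the coercive bulk term is supplied by ``the weighted terms''. This cannot work. After tensorization the radial weights $|x|^{-\rho_{\kappa}}$ and the potential $V$ are centered at the origin of the $x$-variable, while the Morawetz weight $\nabla_x|x-y|=(x-y)/|x-y|$ points from $y$ to $x$; hence these terms have no definite sign on $\R^{2d}$ --- the sign argument you invoke is valid only for the origin-centered single-particle action (Corollary \ref{singmor}) and is destroyed by the moving center $y$. In the paper these contributions are exactly the remainder $\mathcal R(t)$ of \eqref{eq:tenmor2}--\eqref{eq:tenmor3}; the coercive bulk is instead the purely kinematic bilinear term $-\Delta_x^3|x-y|\,|u_j(x)|^2|u_{\ell}(y)|^2$ in \eqref{eq.leadtherm}, which is nonnegative precisely when $d\geq5$ by \eqref{eq.delta3} --- this is the true origin of the dimensional restriction in case 2, which your sketch leaves unexplained. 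One then needs a second idea that is missing from your plan: $\int|\mathcal R(t)|\,dt$ must itself be bounded, and this is done by running the origin-centered inequality \eqref{eq.singmor}, where \eqref{eq.hopot0} and the signs of the weighted Choquard terms enter; only then does the localized estimate of Proposition \ref{higcorrest} follow. A secondary concern is your proposal to localize by replacing $a$ with a cutoff of $|x-y|$ at scale $R$: truncating the weight produces derivative terms supported in the transition region with no sign, which would have to be estimated separately. The paper avoids this entirely by keeping $a(x,y)=|x-y|$ globally and localizing only the resulting spacetime estimate, using that the kernels $|x-y|^{-1}$, $|x-y|^{-3}$, $|x-y|^{-5}$ are bounded below on balls, see \eqref{eq.lboundA}.
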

   
 The direct consequence of this theorem is the scattering in the energy space.

\begin{theorem}\label{thm:scattHFC4}
   Let be $2\leq p<p^*(d)$, $p_{2*}(d)\leq 2$ and assume  \eqref{eq.hopot0}, \eqref{eq.hopot}, \eqref{eq:bs},  \eqref{eq:bsII}, \eqref{eq:bsIII},  \eqref{eq:bsn} being satisfied. Then for any $(u_{j,0})_{j=1}^N \in H^2(\R^d)^N,$ there exist $(u_0^{\pm})_{j=1}^N \in H^2(\R^d)^N$ such that the solution to \eqref{eq:HFC4}
  $(u_j(t,x))_{j=1}^N \in\mathcal C(\R, H^2(\R^d)^N)$, fulfills
    \begin{equation}\label{eq:scattering0}
      \lim_{t\to\pm\infty}\left\|u_j(t,\cdot)-e^{it(\Delta^2_x-\sigma_{1} \Delta_x+\sigma_{2}V)}u_{j,0}^{\pm}(\cdot)\right\|_{H^2(\R^d)}=0,
    \end{equation}
for any $j=1, \dots, N$, in the following cases
    \begin{itemize}
    \item[1)] if $d\geq 3$, $(b_{jj},\sigma_1)\neq(0,0),$ for all $j=1, \dots, N$, $p>2$, $(b, \rho_{1},\rho_{2})=(0, 0,0)$ and $\sigma_2=0$;
    \item[2)]  if $d\geq 5$, for $p=2$ or $(\rho_{1},\rho_{2}, \sigma_{2})\neq(0, 0,0)$.
    \end{itemize}
   \end{theorem}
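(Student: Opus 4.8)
\emph{Strategy.} We derive \eqref{eq:scattering0} from the $L^r$-decay \eqref{eq:desol0} of Theorem \ref{thm:desl} by the classical Cook-Duhamel construction of the wave operators, the engine being the inhomogeneous Strichartz estimates for the propagator $e^{it\Hin}$, where $\Hin:=\Delta^2-\sigma_1\Delta+\sigma_2 V$. Since $V\geq0$ is Schwartz and satisfies \eqref{eq.hopot}, $\Hin$ is self-adjoint and nonnegative, $e^{it\Hin}$ is an isometry on $H^2(\R^d)$, and its dispersive/Strichartz theory coincides, after the standard low-/high-frequency splitting, with that of the free operator $e^{it\Delta^2}$. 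First I write the Duhamel formula
\begin{equation*}
u_j(t)=e^{it\Hin}u_{j,0}+i\int_0^t e^{i(t-s)\Hin}\sum_{k=1}^N\mathcal F\big(u_j(s),u_k(s)\big)\,ds
\end{equation*}
and define the candidate asymptotic data
\begin{equation*}
u_{j,0}^{\pm}:=u_{j,0}+i\int_0^{\pm\infty}e^{-is\Hin}\sum_{k=1}^N\mathcal F\big(u_j(s),u_k(s)\big)\,ds .
\end{equation*}
As $e^{it\Hin}$ preserves the $H^2$-norm, the statement \eqref{eq:scattering0} is equivalent to
\begin{equation*}
\lim_{t\to\pm\infty}\Big\|\int_t^{\pm\infty}e^{-is\Hin}\sum_{k=1}^N\mathcal F\big(u_j(s),u_k(s)\big)\,ds\Big\|_{H^2}=0 ,
\end{equation*}
so the whole problem reduces to proving that this Duhamel tail vanishes in $H^2$ (which in particular shows that the integrals defining $u_{j,0}^{\pm}$ converge in $H^2$).

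\emph{Reduction to a global space-time bound.} By the inhomogeneous Strichartz estimate carrying two derivatives, the tail is dominated by the dual Strichartz norm of $\sum_k\mathcal F(u_j,u_k)$ over $[t,\pm\infty)$; hence it suffices to show that $\sum_k\mathcal F(u_j,u_k)$ lies in a dual Strichartz space globally in time, with norm over $[t,\pm\infty)$ tending to $0$. Two a priori facts feed the estimate. First, the conservation of mass \eqref{eq.mass} and energy \eqref{eq:Ener}, together with the defocusing signs $b,b_{jk}\geq0$ and the positive-definiteness of the Riesz kernel $|x|^{-(d-\gamma_2)}$ (which renders the direct-minus-exchange Hartree-Fock contribution nonnegative), yields the uniform bound $\sup_{t}\sum_{j}\|u_j(t)\|_{H^2}\leq C$. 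Second, the decay \eqref{eq:desol0} of Theorem \ref{thm:desl} supplies a genuinely vanishing factor $\|u_j(t)\|_{L^r}\to0$ for every admissible $r$.

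\emph{Controlling the nonlinearity.} The heart of the matter is to bound the dual Strichartz norm of \eqref{eq.nonlinHF}. Fixing a biharmonic-admissible pair $(q,r)$, I apply the Hardy-Littlewood-Sobolev inequality to the Riesz convolutions $|x|^{-(d-\gamma_\kappa)}\ast(\cdot)$, absorb the singular weights $|x|^{-\rho_\kappa}$ through Hardy and weighted Sobolev inequalities, and distribute the two derivatives by a fractional Leibniz rule; a H\"older splitting in space then factorizes each term of $\mathcal F$ into a product of one $L^r$-type norm and several $H^2$-type norms. The admissibility conditions \eqref{eq:bs}--\eqref{eq:bsn} are exactly what makes all H\"older and Hardy-Littlewood-Sobolev exponents close, while the mass-supercriticality \eqref{eq:bsII} of the Choquard term and $p_{2*}(d)\leq2$ for the quadratic Hartree-Fock term leave a \emph{strictly positive} power of a $\|u_j\|_{L^r}$-factor. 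Interpolating that vanishing factor against the uniform $H^2$ bound turns the pointwise decay \eqref{eq:desol0} into smallness of the tail of the dual Strichartz norm, so the Duhamel tail vanishes and \eqref{eq:scattering0} follows. The two cases are handled by the same scheme: in case~1 ($b=0$, $\rho_\kappa=0$, $\sigma_2=0$, $p>2$, $d\geq3$) only the pure Choquard power survives and the strict inequality $p>2$ provides the margin; in case~2 ($d\geq5$, with $p=2$ or $(\rho_1,\rho_2,\sigma_2)\neq(0,0,0)$) the weights, the potential and the exchange term are all treated by the same H\"older and Hardy-Littlewood-Sobolev estimates, the restriction $d\geq5$ together with \eqref{eq:bsn} guaranteeing that the weighted inequalities and the admissibility range are nonempty.

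\emph{Main obstacle.} The principal difficulty is precisely this nonlinear estimate: placing \emph{two} derivatives on the nonlocal, weighted and (for the $b$-term) non-symmetric expression \eqref{eq.nonlinHF} while simultaneously extracting a strictly positive power of a decaying $L^r$-norm. The two distinct kernels $|x|^{-(d-\gamma_\kappa)}$ and weights $|x|^{-\rho_\kappa}$ demand a delicate bookkeeping of H\"older exponents, and the Fock exchange term, lacking the pointwise positivity of $|u_j|^2$, must be handled through its Hardy-Littlewood-Sobolev/Cauchy-Schwarz structure rather than by modulus. Checking that \eqref{eq:bsII}, \eqref{eq:bsIII}, \eqref{eq:bsn} and $p_{2*}(d)\leq2$ jointly leave room for one admissible choice of exponents is where the proof is most technical.
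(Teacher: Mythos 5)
Your overall architecture (Duhamel formula, wave-operator construction, Cauchy criterion, Strichartz duality, smallness fed by Theorem \ref{thm:desl}) is the same as the paper's, but there is a genuine gap at the decisive step. You claim that the dual Strichartz norm of $\mathcal F$ over $[t,\pm\infty)$ becomes small by ``interpolating the vanishing factor $\|u_j\|_{L^r}\to 0$ against the uniform $H^2$ bound.'' That cannot work as written: a dual Strichartz norm is an $L^{q'}_t$-norm with $q'<\infty$ over an \emph{infinite} time interval, while the only time-wise inputs you permit yourself are $\sup_t\|u_j(t)\|_{H^2}\le C$ and the \emph{qualitative}, rate-free decay \eqref{eq:desol0}. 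A function of $t$ that is bounded and merely tends to zero (for instance like $(\log t)^{-1}$) need not belong to $L^{q'}(t_0,\infty)$ at all, let alone have a small tail; so from your two a priori facts one cannot even conclude that $\mathcal F$ lies in a dual Strichartz space globally in time. The H\"older factorization must leave at least one factor measured in a global-in-time Strichartz norm of the solution itself, and the finiteness of that norm is exactly what is not known a priori.

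This is the hole the paper fills with Lemma \ref{StriHFC4}, which is proved \emph{before} the scattering argument: working in the spaces $X_{(t_0,\infty)}$, $\widetilde X_{(t_0,\infty)}$ of \eqref{eq.1st2}--\eqref{eq.1st3}, the nonlinear estimates \eqref{eq.finest1} and \eqref{eq.finest4} take the schematic form
\[
\sum_j\|(1-\Delta_x)u_j\|_{X_{(t_0,\infty)}}\lesssim\sum_j\|u_{j,0}\|_{H^2_x}+\varepsilon(t_0)\,\Bigl(\sum_k\|(1-\Delta_x)u_k\|_{X_{(t_0,\infty)}}\Bigr)^{\gamma},\qquad \gamma>1,
\]
where the coefficient $\varepsilon(t_0)$ is a positive power of $\sup_j\|u_j\|_{L^\infty((t_0,\infty),L^{r_i}_x)}$ and hence, by Theorem \ref{thm:desl}, can be made small by taking $t_0$ large; a continuity/bootstrap argument then yields $(u_j)_{j=1}^N\in L^q(\R,W^{2,r}(\R^d)^N)$ for all biharmonic-admissible pairs. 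Only with these global space-time bounds in hand does the Cauchy-criterion computation \eqref{eq:stric1} --- your ``tail'' estimate --- go through, because tails of finite space-time norms vanish. Your proposal correctly identifies the technical ingredients (the Kato-type rule \eqref{eq:HiOrDer}, Hardy--Littlewood--Sobolev, the weighted estimates for $|x|^{-\rho_\kappa}$, the structure of the exchange term), but without the bootstrap step --- or some substitute producing a priori global space-time integrability of the solution --- the argument does not close.
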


\begin{remark}[Decay]\label{decay}
The decay of the solutions to \eqref{eq:HFC4} in $L^r$ spaces requires less restriction on the ranges of the parameters involved in the equation, which is a consequence of the fact that we just need satisfied that $(u_j(t,x))_{j=1}^N \in C(\R, H^2(\R^d)^N)$, assured by the Proposition \ref{CoLa} below. As well, all the covered different cases can be summarized as follow.
\begin{itemize}
\item[] \emph{Case $1)$ Theorem \ref{thm:desl}}. It embraces  the fourth-order Hartree-Fock equation (with $\sigma_{1}=1$, $N\geq2$, $b_{jk}=0$), the fourth-order Schr\"odinger-Choquard system ($b_{jj}\neq 0$, $b=0$, $N\geq 1$), the more general fourth-order HFC4, this means that the full interaction between the two terms in \eqref{eq.nonlinHF} is permitted. 
\item[] \emph{Case $2)$ Theorem \ref{thm:desl}}. It comprehends the fourth-order Hartree-Fock  equation (with $\sigma_{1}\geq 0)$, the fourth-order  inhomogeneous Schr\"odinger-Choquard system ($N\geq 1$) and  the more general fourth-order inhomogeneous HFC4 equation ($N\geq2$), both in the unperturbed and perturbed by a potential regime as well as if
$( \rho_{1},\rho_{2})\neq(0,0)$. Here the space dimension is $d\geq 5$ because \eqref{eq:HFC4} is not translation invariant.
\end{itemize}
\end{remark}

\begin{remark}[Scattering]\label{scatter}
The scattering of the solutions to \eqref{eq:HFC4} in $H^{2}(\R^{d})$ needs more summability given by  the Strichartz norms. This reflect to have more constrains on the parameters associated to the equation. Thus, the cases involved are the following.
\begin{itemize}
\item[] \emph{Case $1)$ Theorem \ref{thm:scattHFC4}}. It investigates the fourth-order Schr\"odinger-Choquard system only. 
\item[] \emph{Case $2)$ Theorem \ref{thm:scattHFC4}}. It embraces all the other models itemized in the previous Remark \ref{decay}.
\end{itemize}
\end{remark}

Surveying the literature, which is not so wide according to our knowledge, we are unaware of alike results, with the exception of the aforementioned \cite{Saa1} and \cite{Saa2}. The author prove, in the former, scattering for the solution to \eqref{eq:HFC4}, with $\rho_{1}=0,$ by using the radial concentration-compactenss method of \cite{Guo}, while in the latter establishes the radial decay of the solution to \eqref{eq:HFC4}, with $\rho_{1}>0,$ and the corresponding scattering. Additionally, we look to the interaction Morawetz estimates, which were first displayed for the Hartree-fock equations in the recent works \cite{CGMZ} and \cite{TarVenk}. \\
The paper is organized as follows. After the preliminary Section \ref{Prl}, we afford both the standard and tensor  Morawetz inequalities and their localized analogues in Section \ref{TMoraw}. Sections \ref{DecSolu} and \ref{NFC4scat} are dedicated to the proofs of the Theorems \ref{thm:desl} (decay) and \ref{thm:scattHFC4} (scattering), respectively. The final Section  \ref{appendix} is the Appendix, where an equivalence result between bilinear and tensor Morawetz identities is acquired.

\section{Preliminaries}\label{Prl}
\subsection{Notations.} We denote by $L_x^r$ the Lebesgue space $L^r(\R^{d})$, with $r\geq 1$ and respectively by $W^{s,r}_x$ and  $H^s_x$ the  inhomogeneous Sobolev spaces $W^{s,r}(\R^{d})=(1-\Delta)^{-\frac s2}L^r(\R^{d})$ and $H^2(\R^{d})=(1-\Delta)^{-\frac s2}L^2(\R^{d})$, with $s\geq 1$ (for more details see \cite{Ad}). We indicate by $B^{d}_{x}(r)$, the $d$-dimensional ball centered at $x\in\R^{d}$. Moreover, we set $A^N=A \times \dots \times A$, $N$-times, for any general set $A$ and integer $N\geq 1$.
We also utilize the symbol $\mathcal D_x$ (resp. $\mathcal D_y$) to unfold the dependence w.r.t. $x$ (resp. $y$) variable of a general differential operator $\mathscr D$. We adopt in the sequel the following notation: for any two positive real numbers $a, b,$ we write $a\lesssim b$  (resp. $a\gtrsim b$) to denote $a\leq C b$ (resp. $Ca\geq b$), with $C>0.$
\subsection{Inequalities.}
One can recall from \cite{FWY}, \cite{Pa1}, \cite{PaSh}, \cite{PaX}, the following

\begin{definition}\label{Sadm}
An exponent pair $(q, r)$ is biharmonic-admissible, in short $(q, r) \in \mathcal {B}$, if $2\leq q,r\leq \infty,$  $(q, r, d ) \neq (2,\infty, 4),$ and
\begin{align}\label{StrBharmP}
\frac 4q +\frac d {r}=\frac d2.
\end{align}
We say  that $(\tilde q', \tilde r')\in  \mathcal {B}'$  if it is the H\"older conjugate of the pair $(q, r)\in \mathcal {B}.$
\end{definition} 
Aggiungere motivazione
\begin{proposition}\label{Stri}
Let be two biharmonic-admissible pairs $(q,r)$, $(\widetilde q,\widetilde r)$ and  $V$ as in \eqref{eq.hopot}. Then we have, for $s=0,1$, that the following estimates 
\begin{align}
\label{eq:StriBiharm}
 \| \Delta_{x}^s e^{ it(\Delta^2_x-\sigma_{1} \Delta_x+\sigma_{2}V)} f\|_{L^q_t L^r_x} + \left \|\Delta_{x}^s\int_0^t e^{i (t-\tau) (\Delta^2_x-\sigma_{1} \Delta_x+\sigma_{2}V)} F(\tau)
d\tau\right\|_{L^q_t L^r_x}&\\\nonumber
\leq C\big (\|\Delta_{x}^s f\|_{ L^2_x}+\|\Delta_{x}^s F\|_{L^{\widetilde q'}_t
L^{\widetilde r'}_x}\big ),&
 \end{align}
 is fulfilled in the following cases:
 \begin{itemize}
 \item[a)] for $d\geq 3$, $\sigma_1 \geq 0$ and $\sigma_2=0$;
 \item[b)] for $d\geq 5$, $\sigma_{1}=0$ and $\sigma_2\neq 0.$
 \end{itemize}
 We retrieve also, for $d\geq 5$, the estimates
  \begin{align}\label{eq.casStrex}
  \|\Delta_{x}  e^{ it(\Delta^2_x+\sigma_{2} V)} f\|_{L_{t}^{q}L_{x}^{r}} + \left \| \Delta_{x}\int_0^t e^{ i (t-\tau) (\Delta^2_x+\sigma_{2} V)} F(\tau)
d\tau\right\|_{L^q_t L^r_x}&\\\nonumber\leq C\left(\left\|\Delta_{x} f\right\|_{L^{2}}+\|\nabla_{x} F\|_{L_{t}^{2}L_{x}^{\frac{2 d}{d+2}}}+\|\nabla_{x} F\|_{L_{t}^{2}L_{x}^{\frac{2 d}{d+4}}}\right)
\end{align}
and
\begin{align}\label{eq.casStrexDu}
\left\|(\Delta^2_x+ V)^{\frac{1}{2}} \int_{\mathbb{R}} e^{-i \tau (\Delta^2_x+ V)} F(\tau) d s\right\|_{L_{x}^{2}} \leq C\|\nabla_{x} F\|_{L_{t}^{2}L_{x}^{\frac{2 d}{d+2}}}.
\end{align}
 \end{proposition}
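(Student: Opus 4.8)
The plan is to read \eqref{eq:StriBiharm} as a Keel--Tao statement and to feed the abstract machinery the two ingredients it requires---mass conservation (unitarity of the propagator on $L^2_x$) and a dispersive decay of order $|t|^{-d/4}$---treating the free regime a) and the perturbed regime b) separately, and then to derive the auxiliary bounds \eqref{eq.casStrex}--\eqref{eq.casStrexDu} from the resulting estimates together with a smoothing input. The homogeneous and inhomogeneous halves of \eqref{eq:StriBiharm}, for the whole admissible family \eqref{StrBharmP}, follow at once from the general theorem, the exclusion $(q,r,d)\neq(2,\infty,4)$ being precisely the forbidden Keel--Tao endpoint.

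First I would settle the free regime a), where $\sigma_2=0$ and $\Lin=\Delta^2_x-\sigma_1\Delta_x$ is a constant-coefficient Fourier multiplier with symbol $|\xi|^4+\sigma_1|\xi|^2$. For $\sigma_1=0$ the frequency-localized decay $\|P_N e^{it\Delta^2_x}f\|_{L^\infty_x}\lesssim|t|^{-d/4}\|f\|_{L^1_x}$ is classical (see \cite{Pa1},\cite{PaSh},\cite{PaX}) and yields the admissible Strichartz family through Keel--Tao; for $\sigma_1>0$ a Littlewood--Paley decomposition reduces matters to the same dispersive bound, the quartic part governing the phase at high frequencies while the low-frequency piece enjoys the faster Schr\"odinger-type decay, so that the same admissible family survives. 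The key simplification here is that $\Delta_x^s$ commutes with $e^{it\Lin}$, so the case $s=1$ is obtained by applying the case $s=0$ to $\Delta_x f$ and $\Delta_x F$, and the derivative costs nothing.

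Next I would treat the perturbed regime b), with $d\geq5$, $\sigma_1=0$ and $\Lin=\Delta^2_x+\sigma_2 V$. Hypothesis \eqref{eq.hopot} is tailored to this step: the bound $\|V\|_{L^{d/4}}<\infty$ makes $V$ a form-small perturbation, so that $\Lin$ is self-adjoint and nonnegative with domain $H^4(\R^d)$ and, via Sobolev embedding, $\|\Delta_x g\|_{L^2_x}\sim\|\Lin^{1/2}g\|_{L^2_x}$; the Kato-type condition $\sup_y\int V(x)|x-y|^{-(d-4)}\,dx<\infty$ rules out a zero-energy eigenvalue or resonance and supplies the resolvent and local-smoothing bounds underpinning the perturbed Strichartz inequality, which I would import from \cite{FWY}. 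The delicate point is the $s=1$ derivative, since $\Delta_x$ no longer commutes with $\Lin$: writing $\Delta_x e^{it\Lin}f=(\Delta_x\Lin^{-1/2})\,e^{it\Lin}\,\Lin^{1/2}f$, the $L^2$ endpoint is handled by the norm equivalence just mentioned, but the full $L^r_x$ bound requires the boundedness of the Riesz-type operator $\Delta_x\Lin^{-1/2}$ on $L^r_x$, a square-function estimate for $\Lin$ that is the genuinely technical ingredient.

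Finally, the auxiliary bounds \eqref{eq.casStrex}--\eqref{eq.casStrexDu}, needed only for $d\geq5$, I would obtain by combining the Strichartz estimate above with a Kato-smoothing input and a derivative-distribution trick, following \cite{FWY}: estimate \eqref{eq.casStrexDu} is the dual smoothing estimate for $(\Delta^2_x+V)^{1/2}$, derived from the resolvent bound afforded by the Kato condition together with duality, the exponent $2d/(d+2)$ being fixed by matching the biharmonic scaling through Sobolev embedding; \eqref{eq.casStrex} then follows by inserting this into Duhamel's formula and splitting $\Delta_x=\nabla_x\cdot\nabla_x$ so as to place one derivative on the propagator and one on $F$, which produces the two conjugate exponents $2d/(d+2)$ and $2d/(d+4)$ on the right. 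The main obstacle throughout is the potential case b): securing the survival of the dispersive and Strichartz estimates under the perturbation (self-adjointness, absence of zero-energy resonances) and, above all, the non-commutation of $\Delta_x$ with $\Lin$ in the $s=1$ bound---this is exactly why \eqref{eq.casStrex}--\eqref{eq.casStrexDu}, which keep $\nabla_x F$ on the right and thereby avoid commuting any derivative through $e^{it\Lin}$, are introduced.
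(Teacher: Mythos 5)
Your sketch is correct and sits at the same level as the paper's own treatment: the paper does not prove this proposition at all, but recalls it from \cite{FWY}, \cite{Pa1}, \cite{PaSh}, \cite{PaX}, and your argument — Keel--Tao with the $|t|^{-d/4}$ dispersive bound and the commutation of $\Delta_x^s$ with the propagator in case a); importing the perturbed Strichartz, Kato-smoothing and Riesz-type equivalence bounds of \cite{FWY} in case b), the last of which is exactly the content of Proposition \ref{NormEquivF} via \eqref{eq.SobEquiv} with $s^*=1$ — is a faithful reconstruction of what those references do, including the identification of $(q,r,d)\neq(2,\infty,4)$ as the forbidden Keel--Tao endpoint and of \eqref{eq.casStrex}--\eqref{eq.casStrexDu} as the dual-smoothing/Duhamel estimates that avoid commuting $\Delta_x$ through $e^{it(\Delta^2_x+V)}$. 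The only caveat is that the $L^r_x$-boundedness of $\Delta_x(\Delta^2_x+V)^{-1/2}$ and the smoothing bound behind \eqref{eq.casStrexDu} are imported rather than proved, but this is precisely the level of delegation the paper itself adopts.
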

 It is also useful the following (see \cite{FWY})
\begin{proposition}\label{NormEquivF}
Let be $V$ as in \eqref{eq.hopot}. For any $1<r<\infty$ and $0<s^{*}<2$, one gets
 \begin{align}\label{eq.SobEquiv}
\left\|(\Delta^2_x)^{\frac{s^{*}}{2}} f\right\|_{L^{r}_{x}}\lesssim \left\|(\Delta^2_x+ V)^{\frac{s^{*}}{2}} f\right\|_{L^{r}_{x}} \lesssim\left\|(\Delta^2_x)^{\frac{s^{*}}{2}} f\right\|_{L^{r}_{x}}.
\end{align}
 \end{proposition}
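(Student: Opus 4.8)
The plan is to reduce the two-sided estimate \eqref{eq.SobEquiv} to the $L^r_x$-boundedness of the two operators
\[
T:=(\Delta^2+V)^{\frac{s^*}{2}}(\Delta^2)^{-\frac{s^*}{2}},\qquad S:=(\Delta^2)^{\frac{s^*}{2}}(\Delta^2+V)^{-\frac{s^*}{2}}.
\]
Indeed, setting $g=(\Delta^2)^{\frac{s^*}{2}}f$ turns the right-hand inequality into $\|Tg\|_{L^r_x}\lesssim\|g\|_{L^r_x}$, while setting $h=(\Delta^2+V)^{\frac{s^*}{2}}f$ turns the left-hand inequality into $\|Sh\|_{L^r_x}\lesssim\|h\|_{L^r_x}$. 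Since $S$ and $T$ are formally inverse to one another, it suffices to prove that each is bounded on $L^r_x$ for every $1<r<\infty$.

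First I would record that, under \eqref{eq.hopot0}--\eqref{eq.hopot}, the operator $H:=\Delta^2+V$ is a nonnegative self-adjoint operator on $L^2_x$ whose quadratic form is comparable to that of $H_0:=\Delta^2$. The Kato-type bound $\sup_y\int V(x)|x-y|^{-(d-4)}\,dx<\infty$ in \eqref{eq.hopot} is precisely the statement that $V$ is form-small relative to $\Delta^2$, since for $d>4$ the kernel $|x|^{-(d-4)}$ is, up to a constant, the kernel of $H_0^{-1}$. The key analytic input is then a Davies--Gaffney-type bound for the fourth-order heat semigroup: the free kernel satisfies $|e^{-tH_0}(x,y)|\lesssim t^{-d/4}\exp\!\big(-c(|x-y|^4/t)^{1/3}\big)$, and by expanding $e^{-tH}$ in the Duhamel perturbation series and absorbing each factor of $V$ through the Kato condition, the same subexponential bound is inherited by $e^{-tH}(x,y)$. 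This is where the integrability and decay assumptions on $V$ are genuinely used.

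Granted these generalized Gaussian estimates, both $H$ and $H_0$ fall within the scope of the spectral-multiplier and imaginary-power theory for operators with heat-kernel decay, so $\|H^{i\tau}\|_{L^r_x\to L^r_x}+\|H_0^{i\tau}\|_{L^r_x\to L^r_x}\lesssim e^{c|\tau|}$ for every $1<r<\infty$ and $\tau\in\R$. Since $0<s^*/2<1$, I would then view $z\mapsto H^{z}H_0^{-z}$ as an analytic family on the strip $0\le \operatorname{Re} z\le 1$, bounded on $L^r_x$ along the two boundary lines by the imaginary-power estimates and by the form-comparison at the endpoints, and apply Stein interpolation to obtain the boundedness of $T$ at $z=s^*/2$; the same argument with the roles of $H$ and $H_0$ exchanged yields the boundedness of $S$. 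Combining the two gives \eqref{eq.SobEquiv}, in agreement with \cite{FWY}. The main obstacle is the second step: because the biharmonic semigroup is neither positivity-preserving nor endowed with a true Gaussian bound, the classical positive heat-kernel machinery is unavailable, and the convergence of the Duhamel series together with the verification that the perturbed kernel retains the sharp $(|x-y|^4/t)^{1/3}$ decay must be carried out by hand using exactly the hypotheses \eqref{eq.hopot}.
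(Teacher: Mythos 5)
You should first be aware that the paper contains no proof of Proposition \ref{NormEquivF} at all: it is imported verbatim from \cite{FWY} (``It is also useful the following (see \cite{FWY})''), so your sketch has to be judged against the standard literature rather than against anything in this text. Your general architecture is the right one and matches that literature: reducing \eqref{eq.SobEquiv} to the $L^r_x$-boundedness of $T=(\Delta^2_x+V)^{s^*/2}(\Delta^2_x)^{-s^*/2}$ and $S=(\Delta^2_x)^{s^*/2}(\Delta^2_x+V)^{-s^*/2}$, obtaining generalized Gaussian (sub-exponential) bounds for $e^{-t(\Delta^2_x+V)}$ by Duhamel iteration from the Kato-type condition in \eqref{eq.hopot}, and then feeding these into spectral-multiplier/imaginary-power theory are exactly the ingredients used for results of this kind (note only that finiteness of the Kato norm gives form-boundedness, not form-smallness; since $V\ge 0$ this is harmless here).

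The genuine gap is at the right-hand boundary of your Stein interpolation. On the line $\Re z=1$ you need $H^{1+i\tau}H_0^{-1-i\tau}=H^{i\tau}\bigl(HH_0^{-1}\bigr)H_0^{-i\tau}$ bounded on $L^{r_1}_x$, i.e.\ you need $HH_0^{-1}=I+V(\Delta^2_x)^{-1}$ bounded on $L^{r_1}_x$; the ``form-comparison'' you invoke cannot supply this, because comparability of quadratic forms is a purely $L^2$ statement. Under \eqref{eq.hopot} the operator $V(\Delta^2_x)^{-1}$, with kernel $c_dV(x)|x-y|^{4-d}$, is bounded on $L^1_x$ (the Kato condition is exactly the column-sum Schur test) and, by H\"older plus Hardy--Littlewood--Sobolev, on $L^{r_1}_x$ for $1<r_1<d/4$; but it fails for $r_1\ge d/4$ --- indeed $(\Delta^2_x)^{-1}f$ can be identically infinite for some $0\le f\in L^{r_1}_x$, since $\int_{\R^d}|x-y|^{4-d}\,dy=\infty$. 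Consequently interpolating between $\Re z=0$ (all exponents) and $\Re z=1$ (only $r_1<d/4$) yields boundedness of $T$ merely for $1<r<d/(2s^*)$, not for the full range $1<r<\infty$ claimed in the proposition, and the same defect afflicts $S$. The cure is to never pass through the endpoint $s=2$, which is precisely what the hypothesis $0<s^*<2$ invites: for instance, use the subordination formula $\lambda^{s^*/2}=c_{s^*}\int_0^\infty\lambda(\lambda+t)^{-1}t^{s^*/2-1}\,dt$ to write
\begin{equation*}
H^{s^*/2}-H_0^{s^*/2}=c_{s^*}\int_0^\infty (t+H_0)^{-1}\,V\,(t+H)^{-1}\,t^{s^*/2}\,dt,
\end{equation*}
and estimate the integrand through resolvent kernel bounds (which follow from your heat-kernel step); this route, rather than interpolation through $z=1$, is how the full range $1<r<\infty$ is reached.
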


\subsection{Well-posedness in energy space.} We summarize some of the well-posedness results for \eqref{eq:HFC4}, already appeared in \cite{CLM}, \cite{FWY}, \cite{Saa1}, \cite{Saa2}, which can be obtained by standard energy method (see Theorem 3.3.9 and Remark 3.3.12 in \cite{Ca}) and remain valid for the HFC4 systems. This is done in the following

\begin{proposition}\label{CoLa}
Assume  \eqref{eq.nonlinHF} is such as in Theorem \eqref{thm:desl}. Then for $u_{j,0}\in H^2_x$, with $j=1,\dots,N$, there exists a unique global solution $(u_j)_{j=1}^N \in  C(\R,H^2(\R^{d})^{N}) $ to \eqref{eq:HFC4}, moreover 
\begin{align}
  &M(u_j)(t)=\norma{u_j(0)}_{L^2_x} 
   \label{eq:mcon}
  \end{align}
 and 
  \begin{align}
  &E(u_1(t),\dots,u_N(t))=E(u_1(0),\dots,u_N(0))
  \label{eq:econs},
\end{align}
with $E(u_1(t),\dots,u_N(t))$ as in \eqref{eq:Ener}.
\end{proposition}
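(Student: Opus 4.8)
The plan is to establish Proposition \ref{CoLa} by reducing the system \eqref{eq:HFC4} to a semilinear evolution governed by the linear propagator $e^{it(\Delta^2-\sigma_1\Delta+\sigma_2 V)}$ and applying the abstract $H^2$-theory of Cazenave (Theorem 3.3.9 and Remark 3.3.12 in \cite{Ca}) componentwise. First I would verify that the linear operator $H=\Delta^2-\sigma_1\Delta+\sigma_2 V$ is self-adjoint on $L^2_x$ with form domain $H^2_x$: under \eqref{eq.hopot} the potential term is a small form perturbation (by the Hardy-type bound $\sup_y\int V(x)|x-y|^{-(d-4)}\,dx<\infty$ controlling $V$ relative to $\Delta^2$), so $H$ generates a unitary group on $L^2_x$ whose Sobolev-space mapping properties are compatible with $H^2_x$, and by Proposition \ref{NormEquivF} the norm $\|(\Delta^2+V)^{1/2}\cdot\|$ is equivalent to the flat $H^2$-norm. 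This fixes the functional-analytic framework in which the Duhamel formula
\begin{equation*}
u_j(t)=e^{itH}u_{j,0}-i\int_0^t e^{i(t-\tau)H}\Big(\textstyle\sum_{k=1}^N\mathcal F(u_j,u_k)\Big)(\tau)\,d\tau
\end{equation*}
makes sense in $\mathcal C(\R,H^2_x)$.

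Next I would verify that the nonlinearity $(u_j)_{j=1}^N\mapsto\big(\sum_k\mathcal F(u_j,u_k)\big)_{j=1}^N$ maps $H^2_x{}^N$ into itself and is locally Lipschitz there, so that the contraction-mapping fixed point argument of \cite{Ca} applies on a short time interval and yields local existence and uniqueness. This is where the intercriticality conditions \eqref{eq:bs}, \eqref{eq:bsIII}, \eqref{eq:bsn} enter: I would estimate each Choquard-type term by combining the Hardy--Littlewood--Sobolev inequality for the Riesz potential $|x|^{-(d-\gamma_\kappa)}$ with a weighted fractional Hardy inequality (or a Hölder splitting with the weights $|x|^{-\rho_\kappa}$) and the Sobolev embedding $H^2_x\hookrightarrow L^{2d/(d-4)}_x$ for $d\geq5$, checking that the upper restriction $p<p^*(d)$ keeps all exponents in the admissible Sobolev range and that $0\le\rho_\kappa$ together with \eqref{eq:bsIII}, \eqref{eq:bsn} keeps the weights locally integrable and subcritical. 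The exchange (Fock) term in \eqref{eq.nonlinHF} is handled identically since it has the same scaling structure as the direct term.

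To promote the local solution to a global one, I would invoke the two conservation laws \eqref{eq:mcon} and \eqref{eq:econs}. The mass conservation follows from multiplying the $j$-th equation by $\overline{u_j}$, integrating, and taking imaginary parts; the energy conservation follows by pairing with $\partial_t\overline{u_j}$ and summing over $j$, noting that the nonlinear terms are the Gateaux derivative of the potential-energy functionals in \eqref{eq:Ener} (the factors $1/(2p)$ and $b/2$ are precisely the normalizing constants that make $\mathcal F$ the variational gradient, and the symmetry of the convolution kernel makes the direct and exchange terms self-adjoint so their time derivatives telescope). Since the system is defocusing, all nonlinear contributions to $E$ are nonnegative, whence the conserved energy controls $\sum_j\|u_j(t)\|_{H^2_x}^2$ uniformly in $t$ (using Proposition \ref{NormEquivF} when $\sigma_2\ne0$ to pass between $\|(\Delta^2+V)^{1/2}u_j\|$ and $\|\Delta u_j\|$); combined with mass conservation this gives an a priori bound that prevents blow-up, so the solution extends to all of $\R$.

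The main obstacle I anticipate is the rigorous justification of the energy conservation identity rather than the a priori bound it delivers. The formal computation requires pairing the equation with $\partial_t\overline{u_j}$, but for merely $H^2$-valued solutions $\partial_t u_j$ lives only in a negative-order space, so the manipulation is not literally legitimate; one must instead regularize (e.g. by a frequency truncation or by working with a smoothed equation), prove conservation for the approximants where the pairing is licit, and pass to the limit using the continuous dependence supplied by the fixed-point construction. A secondary technical delicacy is confirming that the nonlinearity is $C^1$ as a map on $H^2_x{}^N$ uniformly on bounded sets, which the conditions \eqref{eq:bs}, \eqref{eq:bsIII}, \eqref{eq:bsn} are designed to guarantee; this is routine but the weight exponents $\rho_\kappa$ must be tracked carefully so that no borderline Sobolev embedding is violated. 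Because all of these steps are standard modulo the scaling bookkeeping, I expect the bulk of the proof to amount to citing \cite{Ca} for the abstract machinery and verifying the nonlinear estimates under the stated intercriticality hypotheses.
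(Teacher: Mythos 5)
Your proposal is correct and takes essentially the same approach as the paper: Proposition \ref{CoLa} is stated there without proof, as a summary of standard well-posedness results (from \cite{CLM}, \cite{FWY}, \cite{Saa1}, \cite{Saa2}) obtainable by the energy method of Theorem 3.3.9 and Remark 3.3.12 in \cite{Ca}, which is exactly the machinery you invoke. Your fleshed-out version (local theory via the intercritical nonlinear estimates, conservation laws justified by regularization, global extension from the nonnegativity of the defocusing nonlinear energy, including the direct-minus-exchange positivity for the Hartree--Fock part) matches what those references supply.
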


\section{Morawetz and Tensor Morawetz identities}\label{TMoraw}
The main aim of this section is to provide the Morawetz-type identities and inequalities. From now on we hide the $t$-variable for easiness, spreading it out only when it is required. We have
\begin{proposition}[Morawetz]\label{lem:mor}
Let $(u_j(t,x))_{j=1}^N \in C(\R, H^2(\R^d)^{N})$ be a global solution to the system \eqref{eq:HFC4},
 let $a=a(x): \R^d \rightarrow \R$ be a sufficiently regular function and introduce the $j$-action given, for any $j=1,\dots, N$, by
\begin{equation}\label{eq.singMor}
M_j(t)
=2 \Im\int_{\R^d} \overline{u_j}(x)\nabla u_j(x) \cdot\nabla a(x)\,dx.
\end{equation}
The following identity holds:
\begin{align}\label{eq:mor2}
 \sum _{j=1}^N\dot M_j(t)
=  \sum _{j=1}^N\int_{\R^d} (-\Delta^3a(x)+\sigma_1 \Delta^2a(x))|u_j(x)|^2+2\Delta^2a(x)|\nabla u_j(x)|^2\,dx&
\\
+
4  \sum _{j=1}^N\Re\int_{\R^d}\nabla u_j(x)D^2(\Delta-\sigma_1)a(x)\cdot\nabla \overline u_j(x)\,dx
\nonumber
\\
-8 \sum _{j=1}^N \Re\int_{\R^d}D^2 u_j(x)D^2a(x)D^2 \overline u_j(x)\,dx+\sigma_2 \sum _{j=1}^N\int_{\R^d}\nabla a(x) \cdot \nabla V(x)|u_j(x)|^2\,dx
\nonumber
\\
-\frac{2(p-2)}{p}\sum _{j,k=1}^Nb_{jk}
\int_{\R^d} \Delta a(x)\squad{|x|^{-\rho_{1}}\round{|x|^{-(d-\gamma_1)}*(|x|^{-\rho_{1}}|u_k|^p)}} | u_j(x)|^{p}\,dx
\nonumber\\
+
\frac 4 p\sum_{j,k=1}^N b_{jk}\int_{\R^d}\nabla a(x)\cdot \nabla \squad{|x|^{-\rho_{1}}\round{|x|^{-(d-\gamma_1)}*(|x|^{-\rho_{1}}| u_k|^p)}} | u_j(x)|^{p}\,dx
\nonumber\\
 -2b\sum_{j,k=1}^N\int_{\R^d}\Delta a(x)\squad{|x|^{-(d-\gamma_2)}*(|x|^{-\rho_2}| u_k|^2) }|x|^{-\rho_2}|u_j(x)|^2\,dx
\nonumber \\
  +2b\sum_{j,k=1}^N\int_{\R^d}\Delta a(x)\squad{|x|^{-(d-\gamma_2)}*(|x|^{-\rho_2}\overline u_k u_j ) }|x|^{-\rho_2} u_k(x)  \overline u_j(x)\,dx
 \nonumber\\
+2b\sum_{j,k=1}^N\int_{\R^d}\nabla a(x)\cdot\nabla\squad{|x|^{-\rho_{2}}\round{|x|^{-(d-\gamma_2)}*(-|x|^{\rho_{2}}| u_k|^2 )}} |u_j(x)|^2\, dx
\nonumber
 \\
 -2b\sum_{j,k=1}^N\int_{\R^d}\nabla a(x)\cdot \nabla \squad{|x|^{-\rho_{2}}\round{|x|^{-(d-\gamma_2)}*({|x|^{-\rho_{2}}\overline u_k u_j  }}} u_k(x)  \overline u_j(x) \, dx,
\nonumber
\end{align}
with $\sigma_{1}, \sigma_{2}=0,1$, $D^2a\in\mathcal M_{d\times d}(\R^d)$ is the hessian matrix of $a$, $\Delta^2a=\Delta(\Delta a)$
and $\Delta^3a=\Delta(\Delta^2a)$.
\end{proposition}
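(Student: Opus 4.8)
The plan is to differentiate the $j$-action $M_j(t)$ from \eqref{eq.singMor} in time and then insert the evolution law \eqref{eq:HFC4}. Since $\partial_t u_j = i\Delta^2 u_j - i\sigma_1\Delta u_j + i\sigma_2 V u_j + i\sum_{k=1}^N\mathcal F(u_j,u_k)$, differentiating under the integral sign gives
\begin{equation*}
\dot M_j(t) = 2\Im\int_{\R^d}\left[\,\overline{\partial_t u_j}\,\nabla u_j + \overline{u_j}\,\nabla\partial_t u_j\,\right]\cdot\nabla a\,dx,
\end{equation*}
and, using $2\Im(iz) = 2\Re(z)$, the right-hand side splits linearly into the contributions of the four operators of \eqref{eq:HFC4}: the biharmonic operator $\Delta^2$, the Laplacian $-\sigma_1\Delta$, the potential $\sigma_2 V$, and the nonlinearity $\mathcal F$. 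I would sum over $j$ and treat each block separately, reassembling \eqref{eq:mor2} at the end.

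For the \emph{linear blocks} the guiding rule is that, each operator being self-adjoint, the pieces in which the first-order multiplier $\nabla a\cdot\nabla$ commutes through cancel pairwise, leaving only commutator terms. The potential is immediate: the two terms carrying $V\overline{u_j}\nabla u_j$ cancel and only the one in which $\nabla$ lands on $V$ survives, producing the gradient-of-potential term $\sigma_2\int\nabla a\cdot\nabla V\,|u_j|^2$ of \eqref{eq:mor2}. The Laplacian $-\sigma_1\Delta$ is the classical second-order virial computation and yields $\sigma_1\int\Delta^2 a\,|u_j|^2$ together with $-4\sigma_1\Re\int\nabla u_j\,D^2a\cdot\nabla\overline{u_j}$, which fuse with the Hessian term coming from $\Delta^2$ into the operator $D^2(\Delta-\sigma_1)a$ of \eqref{eq:mor2}. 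The heart of the argument is the biharmonic block, where one must establish the identity
\begin{align*}
2\Re\int_{\R^d}\left[\,\overline{u_j}\,\nabla\Delta^2 u_j - \Delta^2\overline{u_j}\,\nabla u_j\,\right]\cdot\nabla a\,dx
&= -\int\Delta^3 a\,|u_j|^2 + 2\int\Delta^2 a\,|\nabla u_j|^2 \\
&\quad + 4\Re\int\nabla u_j\,D^2\Delta a\cdot\nabla\overline{u_j} - 8\Re\int D^2 u_j\,D^2 a\,D^2\overline{u_j},
\end{align*}
obtained by integrating by parts four times and bookkeeping carefully which derivatives fall on the weight $a$.

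For the \emph{nonlinear block} I would separate the direct (Hartree) terms from the exchange (Fock) term. The $b_{jk}$-term and the $|u_k|^2$-part of the $b$-term are of nonlocal-potential type, i.e. of the form $W\,|u_j|^{p-2}u_j$ with $W$ real; substituting one finds the contribution $2\int(\nabla W\cdot\nabla a)|u_j|^p + \frac{2(p-2)}{p}\int W\,\nabla(|u_j|^p)\cdot\nabla a$, and a single integration by parts on the last term produces the $\Delta a$-factor, giving the coefficients $-2(p-2)/p$ and $4/p$ displayed in \eqref{eq:mor2} (and the $\mp 2b$ coefficients for the $p=2$ part of the $b$-term). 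The exchange term carries the cross structure $\overline{u_k}u_j\,u_k$ and is \emph{not} of potential type: here, after substitution and integration by parts, the decisive step is to symmetrize the double sum under $j\leftrightarrow k$ and to exploit the evenness of the Riesz kernel $|x|^{-(d-\gamma_2)}$, so that the remaining pieces collapse into the last $\Delta a$- and $\nabla a\cdot\nabla[\cdots]$-contributions of \eqref{eq:mor2}.

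The main obstacle is the biharmonic computation: controlling the large number of integration-by-parts terms, of order up to six in $a$ and four in $u_j$, and verifying that they collapse to exactly the four stated terms with the signs and the coefficients $-1,2,4,-8$. A secondary difficulty is the exchange term, whose cross structure forces the $j\leftrightarrow k$ symmetrization. Finally, since a solution in $\mathcal C(\R,H^2(\R^d)^N)$ has insufficient regularity to justify the manipulations involving $\nabla\Delta^2 u_j$, I would first prove \eqref{eq:mor2} for smooth, rapidly decaying solutions and then recover the general case by a density and approximation argument based on the well-posedness and the conservation laws of Proposition \ref{CoLa}.
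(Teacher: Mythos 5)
Your proposal is correct and follows essentially the same route as the paper: pass to smooth decaying solutions (density argument at the end), differentiate the action, substitute the equation, and treat the biharmonic, Laplacian, potential, Hartree and exchange blocks by the integrations by parts you describe; the intermediate identities you state (the four-term biharmonic identity with coefficients $-1,2,4,-8$, the cancellation leaving $\nabla a\cdot\nabla V$, and the $-2(p-2)/p$, $4/p$ coefficients for the potential-type nonlinearities, with $j\leftrightarrow k$ symmetrization for the exchange term) are exactly what the paper's derivation produces. The only difference is cosmetic: the paper first performs one integration by parts to put $\dot M_j$ in the multiplier form $2\Re\int i\partial_t u_j\,[\Delta a\,\overline{u_j}+2\nabla a\cdot\nabla\overline{u_j}]\,dx$ and then quotes \cite{Ta} and \cite{TarVenk} for the block computations, whereas you keep the raw form of the derivative and spell out the target identities explicitly.
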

\begin{proof} We can assume without loosing generality that the solution of  \eqref{eq:HFC4} is smooth and decaying, switching to the general case by using in a final density argument in the space $ C(\R,H^2(\R^d)^N).$ Then, an integration by parts implies

  \begin{align}\label{eq.smor1}
 \sum _{j=1}^N \partial_t M_j(t)  =-2 \sum _{j=1}^N\Im  \int_{\R^d} \partial_t u_j(x) [\Delta a(x)\overline u_j(x)+2\nabla a(x)\cdot \nabla \overline u_j(x)]\,dx& \\
    =2 \sum _{j=1}^N\Re \int_{\R^d} i \partial_t u_j(x) [\Delta a(x)\overline u_j(x)+2\nabla a(x)\cdot \nabla \overline u_j(x)]\,dx& \nonumber
    \\ 
    =2 \sum _{j=1}^N\Re \int_{\R^d} [-\Delta^2 u_j(x) +\sigma_1\Delta u_j(x)
    ][(\Delta a(x)\overline u_j(x)+2\nabla a(x)\cdot \nabla \overline u_j(x)]\,dx&
 \label{eq.smor1a}
 \\
 -2\sigma_2 \sum _{j=1}^N\Re \int_{\R^d} V(x)u_j(x)
    [(\Delta a(x)\overline u_j(x)+2\nabla a(x)\cdot \nabla \overline u_j(x)]\,dx&
   \nonumber
    \\
   -2\sum_{j,k=1}^Nb_{jk}\int_{\R^d} \Delta a(x)\squad{|x|^{-(d-\gamma_1)}*(|x|^{-\rho_1}| u_j|^p)}|x|^{-\rho_1} | u_k(x)|^{p}\,dx
 \label{eq.smor1b}
\\
-4\sum_{j,k=1}^Nb_{jk}\Re\int_{\R^d} \nabla a(x)\squad{|x|^{-(d-\gamma_1)}*(|x|^{-\rho_1} |u_k|^p)}|x|^{-\rho_1} | u_j(x)|^{p-2}u_j(x)\nabla \overline u_j(x)\,dx
 \label{eq.smor1c}
    \\
 -2b\sum_{j,k=1}^N\int_{\R^d}\Delta a(x)\squad{|x|^{-(d-\gamma_2)}*(|x|^{-\rho_2}| u_k|^2) }|x|^{-\rho_2}|u_j(x)|^2\,dx
 \label{eq.smor1d}
 \\
  +2b\sum_{j,k=1}^N\int_{\R^d}\Delta a(x)\squad{|x|^{-(d-\gamma_2)}*(|x|^{-\rho_2}\overline u_k u_j)  }|x|^{-\rho_2} u_k(x)  \overline u_j(x)\,dx
 \nonumber\\
 -4b\sum_{j,k=1}^N\Re\int_{\R^d}\nabla a(x)\squad{|x|^{-(d-\gamma_2)}*(|x|^{-\rho_2}| u_k|^2) }|x|^{-\rho_2}u_j(x)\nabla\overline u_j(x)\, dx
 \label{eq.smor1e}
 \\
 +4b\sum_{j,k=1}^N\Re\int_{\R^d}\nabla a(x)\squad{|x|^{-(d-\gamma_2)}*(|x|^{-\rho_2}\overline u_k u_j)  }|x|^{-\rho_2} u_k(x) \nabla \overline u_j(x) \, dx.
\nonumber
\end{align}
  By proceeding as in \cite{Ta} in combination with a further integrations by parts of the term involving $V(x)$, we get 
    \begin{align}\label{eq.smor2}
 \eqref{eq.smor1a}=  \sum _{j=1}^N \int_{\R^d} (-\Delta^3a(x)+\sigma_1 \Delta^2a(x))|u_j(x)|^2+2\Delta^2a(x)|\nabla u_j(x)|^2\,dx&
\\
+
4 \sum _{j=1}^N \int_{\R^d}\nabla u_j(x)D^2(\Delta-\sigma_1)a(x)\cdot\nabla \overline u_j(x)\,dx
-8 \sum _{j=1}^N\int_{\R^d}D^2 u_j(x)D^2a(x)D^2 \overline u_j(x)\,dx.
\nonumber
\\
+\int_{\R^d}\nabla a(x) \cdot \nabla V(x)|u_j(x)|^2\,dx.
    \nonumber
  \end{align}
  By arguing as in \cite{TarVenk} we have 
  \begin{align}\label{eq.smor3}
 \eqref{eq.smor1b}+\eqref{eq.smor1c}
 \\
 = -2\sum_{j,k=1}^Nb_{jk}\int_{\R^d} \Delta a(x)\squad{|x|^{-(d-\gamma)}*(|x|^{-\rho_1}| u_j|^p)} |x|^{-\rho_1}| u_j(x)|^{p}\,dx
\nonumber\\
-4\sum_{j,k=1}^Nb_{jk}\int_{\R^d} \nabla a(x)\squad{|x|^{-(d-\gamma)}*(|x|^{-\rho_2}| u_k|^p)} |x|^{-\rho_1}| u_j(x)|^{p-2}\frac 12\nabla |u_j(x)|^2\,dx
    \nonumber
    \\
    =-\frac{2(p-2)}{p}\sum _{k=1}^Nb_{jk}
\int_{\R^d} \Delta a(x)\squad{|x|^{-(d-\gamma_1)}*(|x|^{-\rho_1}|u_k|^p)} |x|^{-\rho_1}| u_j(x)|^{p}\,dx 
\nonumber
    \\
    +\frac 4p\sum_{j,k=1}^Nb_{jk}\int_{\R^d} \nabla a(x)\nabla\squad{|x|^{-\rho_1}\round{ |x|^{-(d-\gamma)}*(|x|^{-\rho_1}| u_k|^p)}} | u_j(x)|^{p}\,dx
    \nonumber
    \end{align}
and
   \begin{align}\label{eq.smor4}
\eqref{eq.smor1e}
 \\
-4b\sum_{j,k=1}^N\Re\int_{\R^d}\nabla a(x)\squad{|x|^{-(d-\gamma_2)}*(|x|^{-\rho_2}| u_k|^2 )}|x|^{-\rho_2}u_j(x)\nabla\overline u_j(x)\, dx
\nonumber
 \\
 +4b\sum_{j,k=1}^N\Re\int_{\R^d}\nabla a(x)\squad{|x|^{-(d-\gamma_2)}*(|x|^{-\rho_2}\overline u_k u_j)  }|x|^{-\rho_2} u_k(x) \nabla \overline u_j(x) \, dx
\nonumber
\\
=-2b\sum_{j,k=1}^N\Re\int_{\R^d}\nabla a(x)\squad{|x|^{-(d-\gamma_2)}*(|x|^{-\rho_2}| u_k|^2 )}|x|^{-\rho_2}\nabla |u_j(x)|^2
\nonumber
 \\
 +2b\sum_{j,k=1}^N2\Re\int_{\R^d}|x|^{-\rho_2}|y|^{-\rho_2}\nabla a(x)\frac{\overline u_k(y)   u_k(x) \nabla u_j(y)  \overline u_j(x) }{|x-y|^{d-\gamma_2}}   \, dxdy=
\nonumber
\\
=-\eqref{eq.smor1d}+2b\sum_{j,k=1}^N\int_{\R^d}\nabla a(x)\cdot \nabla \squad{|x|^{-\rho_2}\round{|x|^{-(d-\gamma_2)}*(|x|^{-\rho_2}| u_k|^2 )}} |u_j(x)|^2\, dx
\nonumber
 \\
 -2b\sum_{j,k=1}^N\int_{\R^d}\nabla a(x)\cdot \nabla \squad{|x|^{-\rho_2}\round{|x|^{-(d-\gamma_2)}*(|x|^{-\rho_2}\overline u_k u_j)} } |x|^{-\rho_2} u_k(x)  \overline u_j(x) \, dx.
\nonumber
 \end{align}
Finally, by taking into account \eqref{eq.smor2}, \eqref{eq.smor3}, \eqref{eq.smor4}, the identity \eqref{eq.smor1} gives rise to \eqref{eq:mor2}.
  \end{proof}

By an application of the above lemma, we can now go over to the proof of the tensor Morawetz identities. More precisely, we have

\begin{lemma}[Tensor Morawetz]\label{lem:tenmor}
Assume $d\geq 3$ and let $(u_j(t,x))_{j=1}^N \in C(\R, H^2(\R^d)^N)$ be a global solution to system \eqref{eq:HFC4}. Further, let us denote by $z_{j,\ell}(x,y)=u_j(x)u_{\ell}(y)$, $a(x,y)=|x-y|$ and set the $j,\ell$-tensor action
\begin{equation}\label{eq:tenmor1}
  \mathcal M_{j,\ell}(t) =2\Im\int_{\R^{2d}}\overline{z_{j,\ell}}(x,y)(\nabla_{x},\nabla_{y})z_{j,\ell}(x,y)  \cdot(\nabla_{x},\nabla_{y})a(x,y)\,dxdy.
\end{equation}
Then the following inequality holds:
  \begin{align}\label{eq:tenmor2}
  \sum_{j,\ell=1}^N\mathcal{ \dot{M}}_{j,\ell}(t)
  \\
  \lesssim 2\sum_{j,\ell=1}^N\int_{\R^{2d}}
  \round{\Xi(x,y)+\sigma_{1} \Delta^2_xa(x,y)| u_j(x)|^2|u_{\ell}(y)|^2} \,dxdy
 \nonumber
 \\
- \sum_{j,\ell, k}^N \widetilde b_{jk}\int_{\R^{2d}}\Delta_x a(x,y) \squad{|x|^{-(d-\gamma_1)}* (|x|^{-\rho_{1}}| u_k(x)|^p)}|x|^{-\rho_{1}}|u_j(x)|^{p}|u_{\ell}(y)|^2\,dxdy
\nonumber
 \\
+ \mathcal R(t),
\nonumber
\end{align}
with $ \widetilde b_{jk}=4b_{jk}(p-2)/p,$
\begin{equation}\label{eq.leadtherm}
\Xi(x,y)=
\begin{cases}
 \Delta^2_xa(x,y) \nabla_x| u_j(x)|^2\nabla_y|u_{\ell}(y)|^2 ,& \text{if} \ \ \ d\geq3,\cr
\\
- \Delta^3_xa(x,y) | u_j(x)|^2|u_{\ell}(y)|^2, & \text{if} \ \ \ d\geq5
\end{cases}
\end{equation}
and 
  \begin{align}\label{eq:tenmor3}
 \mathcal R(t)
  \\
 =2\sigma_2\sum_{j,\ell=1}^N \int_{\R^{2d}}\nabla_{x} a(x,y) \cdot \nabla V(x)| u_j(x)|^2|u_{\ell}(y)|^2\,dxdy&
 \nonumber
 \\
+ \sum_{j, k, \ell}^Nb_{jk}\int_{\R^{2d}}\nabla_x a(x,y) \cdot \nabla_x |x|^{-\rho_{1}} \squad{|x|^{-(d-\gamma_1)}* (|x|^{-\rho_{1}}| u_k(x)|^p)}|u_j(x)|^{p}|u_{\ell}(y)|^2\,dxdy
\nonumber
\nonumber\\
+2b\sum_{j,k, \ell=1}^N\int_{\R^{2d}}\nabla a(x,y)\cdot\nabla |x|^{-\rho_{2}}\squad{|x|^{-(d-\gamma_2)}*(|x|^{-\rho_{2}}| u_k|^2 )} |u_j(x)|^2|u_{\ell}(y)|^2\,dxdy\nonumber
 \\
 -2b\sum_{j,k, \ell=1}^N\int_{\R^{2d}}\nabla a(x,y)\cdot \nabla |x|^{-\rho_{2}}\squad{|x|^{-(d-\gamma_2)}*({|x|^{-\rho_{2}}\overline u_k u_j  )}}u_k(x)  \overline u_j(x) |u_{\ell}(y)|^2\,dxdy.
\nonumber
\end{align}
\end{lemma}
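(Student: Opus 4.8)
The plan is to tensorize the single-particle computation of Proposition \ref{lem:mor} onto the doubled space $\R^{2d}$. I regard $z_{j,\ell}(x,y)=u_j(x)u_\ell(y)$ as a function of $(x,y)\in\R^{2d}$; since $u_j$ solves \eqref{eq:HFC4} in $x$ and $u_\ell$ in $y$, the product satisfies, on $\R^{2d}$, the Schr\"odinger-type equation $i\partial_t z_{j,\ell}=-(\Delta_x^2+\Delta_y^2)z_{j,\ell}+\sigma_1(\Delta_x+\Delta_y)z_{j,\ell}-\sigma_2(V(x)+V(y))z_{j,\ell}-\mathcal N$, where $\mathcal N$ gathers the nonlinear contributions of $u_j$ in $x$ and of $u_\ell$ in $y$. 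Exactly as in the proof of Proposition \ref{lem:mor}, I first reduce by density to smooth, decaying solutions in $C(\R,H^2(\R^d)^N)$, then differentiate the action \eqref{eq:tenmor1} in time and integrate by parts to obtain $\dot{\mathcal M}_{j,\ell}=-2\Im\int_{\R^{2d}}\partial_t z_{j,\ell}[\Delta_{x,y}a\,\overline{z_{j,\ell}}+2\nabla_{x,y}a\cdot\nabla_{x,y}\overline{z_{j,\ell}}]\,dxdy$ with $a(x,y)=|x-y|$, into which I substitute the expression for $i\partial_t z_{j,\ell}$.

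The crucial point, and the source of most of the work, is that the fourth-order operator does not tensorize, i.e.\ $\Delta_x^2+\Delta_y^2\neq(\Delta_x+\Delta_y)^2$, so I cannot simply invoke Proposition \ref{lem:mor} in dimension $2d$. Instead I split the biharmonic multiplier contributions into \emph{diagonal} pieces (the $x$-operator tested against the $x$-part of the multiplier, and symmetrically in $y$) and \emph{off-diagonal} pieces (the $x$-operator tested against the $y$-part of the multiplier, and vice versa). Using the factorized form of $z_{j,\ell}$, the diagonal pieces reproduce, in each variable, the linear part of \eqref{eq:mor2} weighted by the other particle's density, thereby generating the mass term $-\Delta_x^3a\,|u_j|^2|u_\ell|^2$ of \eqref{eq.leadtherm} (whose sign is definite precisely for $d\geq5$) together with the genuinely fourth-order Hessian terms $-8\int D^2z\,D^2a\,D^2\overline z$; since $a(x,y)=|x-y|$ is convex on $\R^{2d}$ its Hessian is positive semidefinite, so this contraction is nonnegative and is discarded to turn the identity into the inequality \eqref{eq:tenmor2}. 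The off-diagonal pieces, after integrating by parts in both variables, collapse the mixed derivatives of $a$ into $\Delta_x^2a$ multiplying the momentum--momentum correlation $\nabla_x|u_j|^2\cdot\nabla_y|u_\ell|^2$, which is the leading term $\Xi$ valid for all $d\geq3$; the $\sigma_1$-contribution is handled along the same lines and leaves the term $\sigma_1\Delta_x^2a\,|u_j|^2|u_\ell|^2$.

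For the nonlinear terms I tensorize the computation \eqref{eq.smor3}--\eqref{eq.smor4}: each nonlocal quantity of $u_j$ in $x$ gets multiplied by the density $|u_\ell(y)|^2$ and integrated over $\R^{2d}$, and the indices are symmetrized upon summing over $j,\ell,k$. The defocusing sign $b_{jk}\geq0$ of the Choquard coupling lets me keep the $\Delta_xa$-term with the sign and constant $\widetilde b_{jk}=4b_{jk}(p-2)/p$ displayed in \eqref{eq:tenmor2}, while the gradient-type nonlocal terms (which carry no definite sign) and the potential term are collected into the remainder $\mathcal R(t)$ of \eqref{eq:tenmor3}; the potential contribution $2\sigma_2\int\nabla_xa\cdot\nabla V\,|u_j|^2|u_\ell|^2$ is the exact doubled analogue of the term in \eqref{eq:mor2}, and its sign will later be controlled through assumption \eqref{eq.hopot0}.

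I expect the main obstacle to be the sign bookkeeping of the fourth-order terms in the doubled variables. Because $\Delta_x^2+\Delta_y^2$ is not the $2d$-dimensional bi-Laplacian, the mixed contributions do not organize themselves automatically: one must verify directly that the fourth-order Hessian terms (the analogues of $-8\int D^2z\,D^2a\,D^2\overline z$ and $4\int\nabla z\,D^2(\Delta a)\cdot\nabla\overline z$ in \eqref{eq:mor2}) combine with the favorable sign allowing them to be dropped, and that the surviving leading part is precisely $\Xi$. A further delicate point is the low-dimensional regime $d=3,4$, where $\Delta_x^2|x-y|$ fails to be a locally integrable function and these derivatives must be read distributionally; this is exactly why the density and regularization argument set up at the start of the proof is needed, and why the two alternative forms of $\Xi$ in \eqref{eq.leadtherm} are recorded, the momentum correlation covering $d\geq3$ and the mass correlation being available once $d\geq5$.
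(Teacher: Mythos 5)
Your plan follows the paper's strategy in outline -- tensorization of the one--variable Morawetz computation, differentiation of the action \eqref{eq:tenmor1}, Fubini, and discarding of signed terms -- but it omits the two positivity arguments that are the actual mathematical content of the lemma, so as written it does not yield the inequality with the remainder $\mathcal R(t)$ of \eqref{eq:tenmor3}. The most serious gap is in the nonlinear terms. You propose to keep the $\widetilde b_{jk}\,\Delta_xa$--term and to ``collect the gradient-type nonlocal terms'' into $\mathcal R(t)$. But $\mathcal R(t)$ in \eqref{eq:tenmor3} contains only the contributions in which $\nabla_x$ falls on the weights $|x|^{-\rho_1},|x|^{-\rho_2}$; the contributions in which $\nabla_x$ falls on the convolution kernels $|x|^{-(d-\gamma_\kappa)}$ are not there, and they could not be tolerated there, since they carry the more singular kernels $|x|^{-(d-\gamma_\kappa)-1}$ and are not controlled by the single--particle Morawetz bound with which Proposition \ref{higcorrest} later estimates $\int|\mathcal R(t)|\,dt$. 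The paper eliminates them with a sign: using $\nabla_xa=(x-y)/|x-y|$ and symmetrizing in $(x,z)$ produces the kernel $K_\ell(x,z)$ of \eqref{eq.Kern}, which is nonnegative by the two--point inequality \eqref{eq.ineq}, and since $b^*_{jk}=-8b_{jk}(d-\gamma_1)/p\le 0$ these contributions are non-positive and are dropped in \eqref{eq:nnlna11}; for the Hartree--Fock direct-minus-exchange part one needs in addition the Cauchy--Schwarz bound $|\varkappa(x,z)|^2\le\varkappa(x,x)\varkappa(z,z)$ for the density matrix $\varkappa(x,z)=\sum_j u_j(x)\overline u_j(z)$, which gives \eqref{eq.HFNonl}. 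Neither argument appears in your proposal, and without them the stated $\mathcal R(t)$ is not reached.

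The second gap is the sign analysis of the fourth-order terms, which you flag as needing verification but never verify, and which your main text gets wrong: convexity of $a$ does handle the contraction $-8\Re\int D^2z\,D^2a\,D^2\overline z$, but the one--variable identity \eqref{eq:mor2} also produces $+4\Re\int\nabla z\,D^2(\Delta a)\cdot\nabla\overline z$, and $D^2_x\Delta_x|x-y|$ is indefinite, as \eqref{eq.quad2} makes explicit; only the \emph{combination} of the two terms is signed, via the pointwise estimates \eqref{eq.quad1} and \eqref{eq.quad2}, whose sum collapses to a negative multiple of $|x-y|^{-3}|\nabla^{\bot}_{x-y}u_j|^2$. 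Similarly, your assertion that the off-diagonal pieces ``collapse'' exactly into $\Delta^2_xa\,\nabla_x|u_j|^2\cdot\nabla_y|u_\ell|^2$ is stated, not computed: already at second order the paper's Appendix (Lemma \ref{eq.MorEqiv}) shows that the cross terms generate momentum--momentum correlations $\Im(\overline u_j\nabla_xu_j)\,D^2_{xy}a\,\Im(\overline u_\ell\nabla_yu_\ell)$, which must be absorbed into the quadratic forms $H_{j\ell},G_{j\ell}$ and signed by convexity of $a$; the fourth-order analogue of this bookkeeping (taken in the paper from \cite{MWZ} and \cite{Ta}, and reflected in \eqref{eq:2iniz} together with the identity \eqref{eq:eqval} that converts the mass form of $\Xi$ into the momentum form) is precisely what your plan leaves unexamined.
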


\begin{proof}
As before, we prove the identities for a smooth, decaying solution to \eqref{eq:HFC4}.
 First, we observe that,
 \begin{align}\label{eq.tenid}
  i \partial_t z_{j,\ell}(x,y)+ (\Delta_{x,y}^2-\sigma\Delta_{x,y})z_{j,\ell} (x,y)
=- (V(x)+V(y))u_{j}(x) u_{\ell} (y) \\
-\sum_{k=1}^N\mathcal F(u_j(x) , u_k(x))u_{\ell}(y)-\sum_{k=1}^N\mathcal F(u_{\ell}(y) , u_k(y))u_{j}(x),
  \nonumber
 \end{align}
with $\Delta^s_{x,y}=\Delta^s_{x}+\Delta^s_{y}$, for $s=1,2$. Then, we differentiate the tensor action w.r.t. time variable, achieving
  \begin{align}\label{eq.TenMor1}
  \mathcal {\dot M}_{j,\ell}(t)\\
  =2\Re\int_{\R^{2d}} i \partial_t z_{j,\ell}(x,y) [\Delta_{x,y} a(x,y)\overline z_{j,\ell}(x,y)+2(\nabla_{x},\nabla_{y}) a(x,y)\cdot (\nabla_{x},\nabla_{y}) \overline z_{j,\ell}(x,y)]\,dx&
    \nonumber\\ 
    =2\Re\int_{\R^{2d}} [\Delta_{x,y}^2 z_{j,\ell}(x,y) -\sigma_1\Delta_{x,y}  z_{j,\ell}(x,y)]
    [(\Delta_{x,y} a(x,y)\overline z_{j,\ell}(x,y)]\,dxdy& 
    \nonumber\\
   + 4\Re\int_{\R^{2d}}  [\Delta_{x,y}^2 z_{j,\ell}(x,y) -\sigma_1\Delta_{x,y}  z_{j,\ell}(x,y)]
     [(\nabla_{x},\nabla_{y}) a(x,y)\cdot (\nabla_{x},\nabla_{y}) \overline z_{j,\ell}(x,y)]\,dxdy& 
    \nonumber\\
    +\mathcal N^p_{j,\ell}(t)+\mathcal N_{j,\ell}(t)+ \mathcal N^V_{j,\ell}(t),
     \nonumber
    \end{align}
   where, by the identity \eqref{eq.tenid} and after exploiting the
  symmetry of $a(x,y)$ in combination with the Fubini's Theorem, we have
     \begin{align}\label{eq.imoranl}
\sum_{j, \ell=1}^N \mathcal   N^p_{j,\ell}(t)
 \\
=  -4\sum_{j, k, \ell=1}^Nb_{k}\int_{\R^{2d}}\left( \Delta_x a(x)\squad{|x|^{-(d-\gamma_{1})}*(|x|^{-\rho_{1}}| u_j|^p)} |x|^{-\rho_{1}}| u_k(x)|^{p}|u_{\ell}(y)|^2\right . 
\nonumber\\
\left . -2\Re\nabla_x a(x)\squad{|x|^{-(d-\gamma_1)}*(|x|^{-\rho_{1}}| u_k|^p)} \cdot |x|^{-\rho_{1}}| u_j(x)|^{p-2}u_j(x)\nabla_x \overline u_j(x)|u_{\ell}(y)|^2\,dxdy\right),
\nonumber
   \end{align}
      \begin{align}\label{eq.imorbnl}
\sum_{j, \ell=1}^N \mathcal   N_{j,\ell}(t)
 \\
 =   -4b\sum_{j, k, \ell=1}^N\Re\int_{\R^{2d}}\Delta_x a(x)\squad{|x|^{-(d-\gamma_2)}*(|x|^{-\rho_{2}}| u_k|^2 )}|x|^{-\rho_{2}}|u_j(x)|^2|u_{\ell}(y)|^2\,dxdy
\nonumber \\
  +4b\sum_{j, k, \ell=1}^N\Re\int_{\R^{2d}}\Delta_x a(x)\squad{|x|^{-(d-\gamma_2)}* (|x|^{-\rho_{2}}\overline u_k u_j)  }|x|^{-\rho_{2}} u_k(x)  \overline u_j(x)|u_{\ell}(y)|^2\,dxdy
  \nonumber
\\
 -8b\sum_{j, k, \ell=1}^N\Re\int_{\R^{2d}}\nabla_x a(x)\squad{|x|^{-(d-\gamma_2)}*(|x|^{-\rho_{2}}| u_k|^2) }|x|^{-\rho_{2}}u_j(x)\nabla\overline u_j(x)|u_{\ell}(y)|^2\,dxdy
\nonumber
 \\
 +8b\sum_{j, k, \ell=1}^N\Re\int_{\R^{2d}}\nabla_x a(x)\squad{|x|^{-(d-\gamma_2)}*(|x|^{-\rho_{2}}\overline u_k u_j)  }|x|^{-\rho_{2}} u_k(x) \nabla \overline u_j(x) |u_{\ell}(y)|^2\,dxdy
\nonumber   
\end{align}
and
      \begin{align}
\sum_ {j,\ell=1}\mathcal   N^V_{j,\ell}(t)
 \\
 =-\sigma_2\sum_{j, \ell=1}\Re \int_{\R^{2d}}V(x)u_{j}(x) 
    [(\Delta_{x} a(x,y)\overline u_{j}(x)+2\nabla_{x} a(x,y)\cdot \nabla_{x}\overline u_{j}(x)]|u_{\ell}(y)|^2\,dxdy.& 
    \nonumber
 \end{align}
\emph{Bilaplacian's terms.} We start by handling the terms in \eqref{eq.tenid} involving $\Delta^2_{x,y}$. We apply  \eqref{eq:mor2} (see \cite{MWZ} and \cite{Ta}), then the Fubini's Theorem allows us to write
  \begin{align}\label{eq:2iniz}
I_1(t)=  2\sum_{j,\ell=1}^N\Re \int_{\R^{2d}} \Delta_{x,y}^2 z_{j,\ell}(x,y) 
    [(\Delta_{x,y} a(x,y)\overline z_{j,\ell}(x,y)]\,dxdy& \\
    + 4 \sum_{j,\ell=1}^N\Re \int_{\R^{2d}} \Delta_{x,y}^2 z_{j,\ell}(x,y) 
    [(\nabla_{x},\nabla_{y}) a(x,y)\cdot (\nabla_{x},\nabla_{y}) \overline z_{j,\ell}(x,y)]\,dxdy& 
    \nonumber\\
=
  2\sum_{j,\ell=1}^N\int_{\R^{2d}}
 \Delta^2_xa(x,y)\round{\nabla_x|u_j(x)|^2\nabla_y|u_{\ell}(y)|^2+2|\nabla_x u_j(x)|^2|u_{\ell}(y)|^2}\,dxdy&
   \nonumber
  \\
   \,8 \sum_{j,\ell=1}^N\Re\int_{\R^{2d}}
\nabla_xu_{j}(x)D^2_x\Delta_xa(x,y)\nabla_x\overline u_{j}(x) |u_{\ell}(y)|^2\,dxdy &
 \nonumber\\
 -16 \sum_{j,\ell=1}^N\Re\int_{\R^{2d}}D_x^2 u_{j}(x)D_x^2a(x,y)D_x^2 \overline u_{j}(x) |u_{\ell}(y)|^2\,dxdy,&
\nonumber
   \end{align}
  where in the third line of the above \eqref{eq:2iniz} we applied the identity 
  \begin{equation}\label{eq:eqval}
    \begin{split}
    -\int_{\R^{2d}} \Delta_x^3a(x,y) | u_j(x)|^{2}|u_{\ell}(y)|^2\,dxdy=
    \\
    \int_{\R^{2d}}\nabla_x | u_j(x)|^{2}\cdot\nabla_y|u_{\ell}(y)|^2\Delta_x^2a(x,y)\,dxdy.
    \end{split}
  \end{equation}
  Let us exploit  the fact that $a(x,y)=|x-y|$. We achieve 
  \begin{equation}\label{eq.delta1}
\Delta_x|x-y|=
\begin{cases}
\frac{d-1}{|x-y|}    &\text{if} \ \ \ d\geq2,\cr
\\
c_{1}\delta_{x=y} &\text{if} \ \ \ d=1,
\end{cases}
\end{equation}
\begin{equation}\label{eq.delta2}
\Delta^2_x|x-y|=
\begin{cases}
-\frac{(d-1)(d-3)}{|x-y|^3}   &\text{if} \ \ \ d\geq4,\cr
\\
-c_{2}\delta_{x=y} &\text{if} \ \ \ d=3
\end{cases}
\end{equation}
and
\begin{equation}\label{eq.delta3}
\Delta^3_x|x-y|=
\begin{cases}
\frac{(d-3)(d-5)}{|x-y|^5}  &\text{if} \ \ \ d\geq6,\cr
\\
c_{3}\delta_{x=y}, &\text{if} \ \ \ d=5,
\end{cases}
\end{equation}
for some $c_{1}, c_{2},c_{3}>0$. Furthermore, if one sets $\nabla_{v}^{\bot}f=\nabla f- (v\cdot \nabla f)v/|v|^2$, for $v\in\R^{d}$, it fulfills
\begin{equation}\label{eq.quad1}
D^2_x u_j(x)D^2_x|x-y|D^2_x \overline u_j(x)\geq 
\frac{(d-1)}{|x-y|^3}|\nabla u(x)-\nabla_{x-y}^{\bot}u(x)|^2,
\end{equation}
for $d\geq2$ (we refer to \cite{LS}) and  
\begin{equation}\label{eq.quad2}
\nabla_x u(x)D_x^2\Delta^2_x|x-y|\nabla_x \overline u(x)
=-\frac{(d-1)}{|x-y|^3}(|\nabla_{x-y}^{\bot}u(x)|^2- 2|\nabla u(x)-\nabla_{x-y}^{\bot}u(x)|^2),
\end{equation}
(see \cite{MWZ} for more details). We have also, by the Fourier transform and Plancherel's identity, that
\begin{align}\label{eq.equiv2}
\sum_{j,\ell=1}^N\int_{\R^{2d}}
 \Delta^2_xa(x,y)\nabla_x|u_j(x)|^2\nabla_y|u_{\ell}(y)|^2\,dxdy&\\
 =-\round{\Delta_x\sum_{j,\ell=1}^N|u_j(x)|^2 ,(-\Delta_x)^{\frac{1-d}2} \Delta_x \sum_{j,\ell=1}^N|u_{\ell}(x)|^2}\leq 0,
  \nonumber
\end{align}
for any $d\geq 1$. Finally, By gathering \eqref{eq.quad1}, \eqref{eq.quad2}, \eqref{eq.equiv2} and \eqref{eq:2iniz} we attain $I_1(t)\leq 0.$\\
\emph{Laplacian's terms.} We will consider the terms involving $\Delta_{x,y}$. The approach of \cite{CGT}, \cite{Ta} and the Fubini's Theorem, bring to
  \begin{align}\label{eq.laplt}
 I_2(t)= -2\sum_{j,\ell=1}^N\sigma_1\Re \int_{\R^{2d}} \Delta_{x,y} z_{j,\ell}(x,y) 
    [(\Delta_{x,y} a(x,y)\overline z_{j,\ell}(x,y)]\,dxdy& \\
    - 4\sum_{j,\ell=1}^N\sigma_1\Re \int_{\R^{2d}} \Delta_{x,y} z_{j,\ell}(x,y) 
    [(\nabla_{x},\nabla_{y}) a(x,y)\cdot (\nabla_{x},\nabla_{y}) \overline z_{j,\ell}(x,y)]\,dxdy& 
    \nonumber\\
=
  2\sum_{j,\ell=1}^N\int_{\R^{2d}}
 \Delta^2_xa(x,y) | u_j(x)|^2|u_{\ell}(y)|^2\,dxdy&
\nonumber\\
    - 4\sigma_1\sum_{j,\ell=1}^N \Re \int_{\R^{2d}}  \nabla_{x}u_{j}(x)D^2_{xy}a(x,y)\nabla_{x}\overline u_{j}(x)|u_{\ell}(y)|^2\,dxdy&
    \label{eq.laplta}\\
 -4\sigma_1\sum_{j,\ell=1}^N\Re\int_{\R^{2d}} \nabla_{y}u_{\ell}(y)D^2_{xy}a(x,y)\nabla_{y}\overline u_{\ell}(y)|u_{\ell}(x)|^2\,dxdy&
 \label{eq.lapltb}\\
  +8\sigma_1\sum_{j,\ell=1}^N\int_{\R^{2d}}
  \Im(\overline u_{j}(x)\nabla_{x}u_{j}(x))D^2_{xy}a(x,y)
  \Im(\overline u_{\ell}(y)\nabla_{y}u_{\ell}(y))\,dxdy.&
  \label{eq.lapltc}
   \end{align}
One can verifies that (see Lemma \ref{eq.MorEqiv}, we refer also to \cite{TarVenk}) 
  \begin{align}\label{eq.lapl2}
  \eqref{eq.laplta}+\eqref{eq.lapltb}+\eqref{eq.lapltc}
  \\=-4\sigma_1\int_{\R^{2d}}
 \left(H_{jk} D^2_{x}\phi(|x-y|)\overline{H}_{j\ell}+G_{j\ell} D^2_{x}\phi(|x-y|)\overline{G}_{jk}\right)\,dxdy&,  
 \nonumber
\end{align}
where
   \begin{align*}
    H_{j\ell} 
    &
    :=u_{j}(t,x)\nabla_{y}\overline{u_{j}(t,y)}+\nabla_{x}u_{\ell}(t,x)\overline{u_{\ell}(t,y)},
    \\
    G_{j\ell}
    &
    :=u_{j}(t,x)\nabla_{y}u_{\ell}(t,y)-\nabla_{x}u_{j}(t,x)u_{\ell}(t,y).
  \end{align*}
  Thus, since $a(x,y)$ is a convex function satisfying \eqref{eq.delta2}, one gets the chain of inequalities
  \begin{equation}\label{eq.laplt3}
  I_{2}(t)\leq  2\sum_{j,\ell=1}^N\int_{\R^{2d}}
 \Delta^2_xa(x,y) | u_j(x)|^2|u_{\ell}(y)|^2\,dxdy \leq 0.
 \end{equation}
\emph{Nonlinear terms.} We will start by treating the term $\mathcal   N^p_{j,\ell}(t)$. We have in fact
\begin{align}  \label{eq:nnlna}
 N^p_{j,\ell}(t)
\\
=\sum_{k=1}^N\left (-\widetilde b_{jk}\int_{\R^{2d}}\Delta_xa(x,y) \squad{|x|^{-(d-\gamma_1)}*(|x|^{-\rho_{1}}| u_k(x)|^p)}|x|^{-\rho_{1}}|u_j(x)|^{p}|u_{\ell}(y)|^2\,dxdy\right.&
 \nonumber\\
\left.+\frac {8b_{jk}}p\int_{\R^{2d}} \nabla_xa(x,y) \cdot \nabla_x \round{|x|^{-\rho_{1}}\squad{|x|^{-(d-\gamma_1)}*(|x|^{-\rho_{1}}| u_k|^p)}}|u_j(x)|^{p}|u_{\ell}(y)|^2\,dxdy\right),&
 \nonumber
\end{align}
where $ \widetilde b_{jk}$ is as in \eqref{eq:tenmor2}.
We notice also that, by means of
\begin{equation}\label{eq.dMor}
\nabla_xa(x,y)=\frac{x-y}{|x-y|},
\end{equation}
we can handle the last term in \eqref{eq:nnlna} as
\begin{align}  \label{eq:nonlin2}
\sum_{j,k,\ell=1}^N\left(b_{jk}^*\int_{\R^{2d}}
 \frac{(x-y)\cdot(x-z)|u_j(x)|^{p}| u_k(z)|^{p}}{|x-y||x-z|^{d-\gamma_1+2}}|x|^{-\rho_{1}}|y|^{-\rho_{1}} |u_{\ell}(y)|^2 \,dxdydz\right.
 \\
\left.+\frac {8b_{jk}}p\int_{\R^{2d}}
 \nabla_xa(x,y) \cdot \nabla_x |x|^{-\rho_{1}}\squad{|x|^{-(d-\gamma_1)}*(|x|^{-\rho_{1}}| u_k|^p)}|u_j(x)|^{p}|u_{\ell}(y)|^2\,dxdy\right)
 \nonumber\\
=\sum_{\substack{j,k, \ell=1}}^N\left(\frac 12 b_{jk}^*\int_{\R^{2d}}
 \frac{|x|^{-\rho_{1}}|y|^{-\rho_{1}}}{|x-z|^{d-\gamma_1+2}}|u_j(x)|^{p}| u_k(z)|^{p}K_{\ell}(x,z)\,dxdz\right.
 \nonumber\\
 \left.+\frac {8b_{jk}}p\int_{\R^{2d}}
 \nabla_xa(x,y) \cdot \nabla_x |x|^{-\rho_{1}}\squad{|x|^{-(d-\gamma_1)}*(|x|^{-\rho_{1}}| u_k|^p)}|u_j(x)|^{p}|u_{\ell}(y)|^2\,dxdy\right),
 \nonumber
\end{align}
with $b_{jk}^*=-8b_{jk}(d-\gamma_1)/p$ and where
\begin{equation}\label{eq.Kern}
 K_{\ell}(x,z)=(x-z)\cdot \int_{\R^d}|u_{\ell}(y)|^2\round{\frac{x-y}{|x-y|}-\frac{z-y}{|z-y|}}\, dy.
\end{equation}
Then, the inequality 
\begin{align}\label{eq.ineq}
(x-z) \cdot \round{\frac{x-y}{|x-y|}-\frac{z-y}{|z-y|}}
\\
=\round{|x-y||z-y|-(x-y)\cdot(z-y)}\round{\frac{|x-y|+|z-y|}{|x-y||z-y|}}\geq 0,
\nonumber
\end{align}
enhances to
\begin{align}\label{eq.inf}
\inf_{(x,y)\in\R^{2d} }K_{\ell}(x,z)\geq 0.
\end{align}
By gathering the above \eqref{eq.inf} with \eqref{eq:nonlin2} we obtain then, for any $t\in\R$,
\begin{align}  \label{eq:nnlna11}
\sum_{j,\ell=1}^N\mathcal N^p_{j,\ell}(t)
\\
\leq -\sum_{j,k,\ell=1}^N\left(\widetilde b_{jk}\int_{\R^{2d}}\Delta_xa(x,y) \squad{|x|^{-(d-\gamma_1)}*(|x|^{-\rho_{1}}| u_k(x)|^p)}|x|^{-\rho_{1}}|u_j(x)|^{p}|u_{\ell}(y)|^2\right.
\nonumber
\\
+\left.\frac {8 b_{jk}}{p} \nabla_xa(x,y)\cdot \nabla_{x} |x|^{-\rho_{1}} \squad{|x|^{-(d-\gamma_1)}*(|x|^{-\rho_{1}}| u_k(x)|^p)}|u_j(x)|^{p}|u_{\ell}(y)|^2\,dxdy\right).
\nonumber
\end{align}
 We consider now the term 
\begin{align} 
\sum_{j,\ell=1}^N\mathcal N_{j,\ell}(t)&
\\
=-2\beta(d-\gamma_2)\sum_{\ell=1}^N\int_{\R^{2d}}
 \frac{|x|^{-\rho_{2}}|y|^{-\rho_{2}}}{|x-z|^{d-\gamma_2+2}}\round{\varkappa(x,x) \varkappa(z,z)-|\varkappa(x,z)|^2 }K_{\ell}(x,z)\,dxdy
 \nonumber
 \\
 +2b\sum_{j,k,\ell=1}^N\int_{\R^{2d}}\nabla_{x} a(x)\cdot\nabla_{x}|x|^{-\rho_{2}}\squad{|x|^{-(d-\gamma_2)}*(|x|^{-\rho_{2}}| u_k|^2 )} |u_j(x)|^2|u_{\ell}(y)|^2\,dxdy
\nonumber
 \\
 -2b\sum_{j,k,\ell=1}^N\int_{\R^{2d}}\nabla_{x} a(x)\cdot \nabla_{x} |x|^{-\rho_{2}}\squad{|x|^{-(d-\gamma_2)}*({|x|^{-\rho_{2}}\overline u_k u_j  )}} u_k(x)  \overline u_j(x) |u_{\ell}(y)|^2\,dxdy,
\nonumber
\end{align}
with $K(x,z)$ as in \eqref{eq.Kern} and 
\begin{align}
\varkappa(x,z)=\sum_{j=1}^N u_j(x)\overline u_j(z), \, \, \,  \, \, \,\varkappa(x)=\varkappa(x,x).
\end{align}
The Cauchy-Schwartz inequality bears to the bound $|\eta(x,z)|^2\leq \varkappa(x)\varkappa(z)$, for any $x,y\in\R^d$. This fact and  \eqref{eq.inf}, imply 
\begin{align}\label{eq.HFNonl}
\sum_{j,\ell=1}^N\mathcal N_{j,\ell}(t)&
 \\
 \leq 2b\sum_{j,k,\ell=1}^N\int_{\R^{2d}}\nabla_{x} a(x)\cdot\nabla_{x}|x|^{-\rho_{2}}\squad{|x|^{-(d-\gamma_2)}*(|x|^{-\rho_{2}}| u_k|^2 )} |u_j(x)|^2|u_{\ell}(y)|^2\,dxdy
\nonumber
 \\
 -2b\sum_{j,k,\ell=1}^N\int_{\R^{2d}}\nabla_{x} a(x)\cdot \nabla_{x} |x|^{-\rho_{2}}\squad{|x|^{-(d-\gamma_2)}*({|x|^{-\rho_{2}}\overline u_k u_j  )}} u_k(x)  \overline u_j(x) |u_{\ell}(y)|^2\,dxdy.
\nonumber
\end{align}

Finally we consider $N^V_{j,\ell}(t)$. An easy calculations yields 
  \begin{align}\label{eq.intmorpot}
\sum_{j,\ell=1}^N \mathcal   N^V_{j,\ell}(t)
 \\
 =2\sigma_2\sum_{j,\ell=1}^N \int_{\R^{2d}}\nabla_{x} a(x) \cdot \nabla_{x} V(x)|u_j(x)|^{2}|u_{\ell}(y)|^2\,dxdy.& 
    \nonumber
 \end{align}
By combining now \eqref{eq:eqval}, \eqref{eq.laplt3}, \eqref{eq:nnlna11}, \eqref{eq.HFNonl} and \eqref{eq.intmorpot}, we attain the inequality \eqref{eq:tenmor2}.
 \end{proof}
 We need also the following 
 
 \begin{corollary} \label{singmor}
Assume $d\geq 5$ and let $V: \mathbb{R}^{d} \rightarrow \mathbb{R}$ be a function satisfying \eqref{eq.hopot0}. If $(u_j(t,x))_{j=1}^N \in C(\R, H^2(\R^d)^N)$ be a global solution to system \eqref{eq:HFC4}  and $a(x,0)=a(x)$, then it holds that,
\begin{align}\label{eq.singmor}
\sum_{j=1}^N \dot M_j(t)\leq \sum_{j=1}^N \int_{\R^d} (-\Delta^3a(x)+\sigma_1 \Delta^2a(x))|u_j(x)|^{2}\,dx &\\
+ 2\sum_{j=1}^N  \int_{\mathbb{R}^{d}} \nabla a(x)\cdot \nabla V(x)|u_j(x)|^{2} d x&
\nonumber
\\
-\frac{2\beta_{1}}{(d-1)p}\sum _{j,k=1}^Nb_{jk}
\int_{\R^d}\Delta a(x)\squad{|x|^{-(d-\gamma_1)}*(|x|^{-\rho_{1}}|u_k|^p)} |x|^{-\rho_{1}} | u_j(x)|^{p}\,dx
\nonumber\\
 -\frac {2b\beta_{2}}{d-1}\sum_{j,k=1}^N\int_{\R^d}\Delta a(x)\squad{|x|^{-(d-\gamma_2)}*(|x|^{-\rho_{2}}| u_k|^2)}|x|^{-\rho_{2}}|u_j(x)|^2\,dx
\nonumber \\
  +\frac {2b\beta_{2}}{d-1}\sum_{j,k=1}^N\int_{\R^d}\Delta a(x)\squad{|x|^{-(d-\gamma_2)}*(|x|^{-\rho_{2}}\overline u_k u_j)  } |x|^{-\rho_{2}}u_k(x)  \overline u_j(x)\,dx,
 \nonumber
 \end{align}
with $\beta_{1}=(p-2)(d-1)+2\rho_{1}$ and $\beta_{2}=d-1+2\rho_{2}$.
\end{corollary}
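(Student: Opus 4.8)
The plan is to start from the exact single Morawetz identity \eqref{eq:mor2} of Proposition \ref{lem:mor} and specialize it to the radial weight $a(x)=a(x,0)=|x|$, whose iterated Laplacians are given explicitly by \eqref{eq.delta1}--\eqref{eq.delta3} with $y=0$. The whole point is to turn that identity into the claimed inequality by discarding every term carrying a favourable sign and by rewriting each nonlinear gradient term as a multiple of the corresponding $\Delta a$-term, which is precisely where the constants $\beta_1,\beta_2$ are manufactured. Two elementary facts drive everything: since $\Delta a=(d-1)/|x|$ one has $|x|^{-1}=\Delta a/(d-1)$, and since $\nabla a=x/|x|$ one has $\nabla a\cdot x/|x|=1$. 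These are the source of the factor $1/(d-1)$ and of the appearance of $\rho_\kappa$ in $\beta_\kappa$.

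For the biharmonic block I would treat the quadratic terms $4\,\nabla u_j\,D^2(\Delta-\sigma_1)a\cdot\nabla\overline u_j-8\,D^2u_j\,D^2a\,D^2\overline u_j$ exactly as in the bilaplacian step of Lemma \ref{lem:tenmor}: the pointwise inequalities \eqref{eq.quad1} and \eqref{eq.quad2}, together with the convexity of $a=|x|$ (so that $D^2a\geq0$ and the $-\sigma_1D^2a$ part is nonpositive), make this whole block $\leq0$, and it may be dropped. Likewise $2\,\Delta^2a\,|\nabla u_j|^2\leq0$, because $\Delta^2|x|=-(d-1)(d-3)/|x|^3\leq0$ for $d\geq3$. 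What remains is the leading term $(-\Delta^3a+\sigma_1\Delta^2a)|u_j|^2$, and this is exactly where $d\geq5$ enters: by \eqref{eq.delta3} one has $\Delta^3|x|\geq0$ (a genuine function for $d\geq6$, a nonnegative multiple of $\delta_{x=0}$ for $d=5$, after the usual regularization of the singularity of $|x|$ at the origin), so the retained term has the correct sign. The potential term $\sigma_2\int_{\R^d}\nabla a\cdot\nabla V\,|u_j|^2$ is kept (up to the harmless constant in front of the statement), its sign being controlled by the monotonicity hypothesis \eqref{eq.hopot0} through $\nabla a\cdot\nabla V=(x\cdot\nabla V)/|x|\leq0$.

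The heart of the matter, and the step I expect to be the main obstacle, is the nonlinear block. For each nonlocal term I would write the gradient contribution as a double integral and split $\nabla_x$ acting on the nonlinear potential into two pieces. The piece coming from the outer weight, $\nabla_x|x|^{-\rho_\kappa}=-\rho_\kappa|x|^{-\rho_\kappa-2}x$, contracts with $\nabla a=x/|x|$ and, via $|x|^{-1}=\Delta a/(d-1)$, produces an explicit multiple $-\tfrac{\rho_\kappa}{d-1}\Delta a$ of the associated potential; this is the mechanism feeding the $\rho_\kappa$-dependent part of $\beta_\kappa$ into the final coefficient. The remaining piece, coming from $\nabla_x|x-z|^{\gamma_\kappa-d}=(\gamma_\kappa-d)|x-z|^{\gamma_\kappa-d-2}(x-z)$, is symmetrized in $(x,j)\leftrightarrow(z,k)$ under the double sum; it collapses onto the nonnegative kernel $(x-z)\cdot\bigl(\tfrac{x}{|x|}-\tfrac{z}{|z|}\bigr)\geq0$ of \eqref{eq.ineq}--\eqref{eq.inf} (the second Morawetz point frozen at the origin), and since $\gamma_\kappa-d<0$ this term is nonpositive and may be discarded.

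Adding these weight-derivative contributions to the genuine $\Delta a$-terms already present in \eqref{eq:mor2}—of coefficient $-\tfrac{2(p-2)}{p}$ for the Choquard term and $\mp2b$ for the Hartree direct and exchange terms—reproduces the stated numerators $\beta_1=(p-2)(d-1)+2\rho_1$ and $\beta_2=d-1+2\rho_2$; note that $\beta_1$ collapses to $(p-2)(d-1)$ and $\beta_2$ to $d-1$ when $\rho_\kappa=0$, so that the inhomogeneous weights are the only source of the $\rho_\kappa$-terms. The delicate points are exactly the sign-definiteness of the discarded nonlocal pieces—which for the Hartree exchange term additionally requires the Cauchy--Schwarz bound $|\varkappa(x,z)|^2\leq\varkappa(x)\varkappa(z)$ used in Lemma \ref{lem:tenmor}—and the careful homogeneity bookkeeping, in particular the interplay of the two weights $|x|^{-\rho_\kappa}$ and $|z|^{-\rho_\kappa}$ under the convolution symmetrization, which must be tracked precisely in order to land on the exact coefficients $\beta_1,\beta_2$.
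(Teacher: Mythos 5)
Your proposal follows essentially the same route as the paper's own proof: specialize the Morawetz identity \eqref{eq:mor2} to $a(x)=|x|$, drop the sign-definite quadratic-derivative terms via \eqref{eq.quad1}--\eqref{eq.quad2} with $y=0$, keep the leading $(-\Delta^3 a+\sigma_1\Delta^2 a)|u_j|^2$ and potential terms, and for each nonlocal term split $\nabla_x$ into the weight piece (absorbed into the $\Delta a$-coefficient through $|x|^{-1}=\Delta a/(d-1)$, producing the $\rho_\kappa$-contributions to $\beta_\kappa$) and the kernel piece (discarded after symmetrization by \eqref{eq.ineq}, with Cauchy--Schwarz for the exchange term). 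If anything, you are more explicit than the paper, which compresses the Hartree terms into ``we can manage in the same way the last four terms'' without spelling out the Cauchy--Schwarz step you correctly identify as necessary there.
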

\begin{proof}
Let us directly apply the Morawetz identity \eqref{eq:mor2} discarding some negative terms. We observe first that 
\begin{equation}\label{}
 \sum_{j=1}^N \int_{\R^d} (-\Delta^3a(x)+\sigma_1 \Delta^2a(x))|u_j(x)|^{2}\,dx\leq 0,
\end{equation}
from \eqref{eq.delta2} and \eqref{eq.delta3}.  Then we get
\begin{align*}
-\frac{2(p-2)}{p}\sum _{j,k=1}^Nb_{jk}
\int_{\R^d} \Delta a(x)\squad{|x|^{-\rho_{1}}\round{|x|^{-(d-\gamma_1)}*(|x|^{-\rho_{1}}|u_k|^p)}} | u_j(x)|^{p}\,dx
\\
+
\frac 4 p\sum_{j,k=1}^N b_{jk}\int_{\R^d}\nabla a(x)\cdot \nabla \squad{|x|^{-\rho_{1}}\round{|x|^{-(d-\gamma_1)}*(|x|^{-\rho_{1}}| u_k|^p)}} | u_j(x)|^{p}\,dx\\
=-\sum_{j,k=1}^N\frac{2(p-2)}{p} b_{jk}\int_{\R^d}\frac{d-1}{|x|}\ \squad{|x|^{-(d-\gamma_1)}*(|x|^{-\rho_{1}}| u_k(x)|^p)}|x|^{-\rho_{1}}|u_j(x)|^{p}\,dx&
 \nonumber\\
-\sum_{j,k=1}^N\frac {4b_{jk}}p\int_{\R^d} \frac{\rho_{1}}{|x|} |x|^{-\rho_{1}}\squad{|x|^{-(d-\gamma_1)}*(|x|^{-\rho_{1}}| u_k|^p)}|u_j(x)|^{p}\,dx&
 \nonumber\\
 +\frac 14\sum_{\substack{j,k=1}}^Nb_{jk}^*\int_{\R^{2d}}
 \frac{|x|^{-\rho_{1}}|y|^{-\rho_{1}}}{|x-z|^{d-\gamma_1+2}}|u_j(x)|^{p}| u_k(z)|^{p}(x-z) \cdot \round{\frac{x}{|x|}-\frac{z}{|z|}}\,dxdz
 \nonumber\\
 \leq 
 -\frac{2\beta_{1}}{(d-1)p}\sum _{j,k=1}^Nb_{jk}
\int_{\R^d}\Delta a(x)\squad{|x|^{-\rho_{1}}\round{|x|^{-(d-\gamma_1)}*(|x|^{-\rho_{1}}|u_k|^p)}} | u_j(x)|^{p}\,dx,
\nonumber
\end{align*}
wieh $b_{jk}^{*}$ as in \eqref{eq:nonlin2} and where the last inequality is obtained by \eqref{eq.ineq}. Then we can manage in the same way the last four terms in \eqref{eq:mor2}. In addition the first two terms are non-negative because of \eqref{eq.quad1} and \eqref{eq.quad2}, which are still fulfilled with $y=0$. This guarantees the proof of the corollary.
\end{proof}
The direct consequence of Proposition \eqref{CoLa}, Lemma \eqref{lem:tenmor} and Corollary \eqref{singmor} are the linear and nonlinear Morawetz estimates localized on the space-time slabs $\R\times B_{\tilde x}^d(r)$. We have

\begin{proposition}\label{lowcorrest}
 Assume $d\geq 3$, $p\geq 2$, $(b_{jj},\sigma_1)\neq(0,0),$ for all $j=1, \dots, N$, and $(\rho_{1},\rho_{2}, \sigma_{2})=(0,0,0)$. Let $(u_j)^N_{j=1}\in C(\R,H^2(\R^d)^N)$ be as in Proposition \ref{CoLa}. Then, selecting $a(x,y)=|x-y|$, $r>0$ and $\tilde x\in\R^d$, one retreives the following localized estimates. For $d\geq4$,
\begin{align} \label{eq:MorLoc1}
 \sum_{j,k,\ell=1}^N\widetilde b_{jk}\int_{\R}\sup_{\tilde x\in\R^d}\int_{B_{\tilde x}^d(r)^3}|u_j(t,x)|^{p}| |u_\ell(t,y)|^{2}|u_k(t,z)|^{p}
\,dxdydzdt
  \\
  +\sigma_{1}  \sum_{j,\ell=1}^N\int_{\R}\sup_{\tilde x\in\R^d}\int_{B_{\tilde x}^d(r)^2}
 |u_j(t,x)|^{2} |u_\ell(t,y)|^{2}
\,dxdydt
 \leq C\sum_{j=1}^N\|u_{j,0}\|^4_{H^2_x},
 \nonumber
  \end{align}
  where $ \widetilde b_{jk}=4b_{jk}(p-2)/p$. For $d=3$, 
  \begin{align} \label{eq:MorLoc2}
\sum_{j, k, \ell=1}^N\widetilde b_{jk}\int_{\R}\sup_{\tilde x\in\R}\int_{B_{\tilde x}^3(r)^2}|u_j(t,x)|^{p}|u_\ell(t,x)|^{2}|u_k(t,z)|^{p}
 \,dx dzdt 
  \\
  +\sigma_{1}  \sum_{j,\ell=1}^N\int_{\R}\sup_{\tilde x\in\R^3}\int_{B_{\tilde x}^3(r)}
 |u_j(t,x)|^{2} |u_\ell(t,x)|^{2}\, dxdt
 \leq C\sum_{j=1}^N\|u_{j,0}\|^4_{H^2_x}.
 \nonumber
  \end{align}
\end{proposition}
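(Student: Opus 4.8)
The plan is to derive both inequalities from the tensor Morawetz bound \eqref{eq:tenmor2} of Lemma \ref{lem:tenmor} with $a(x,y)=|x-y|$, specialized to the present translation-invariant regime $(\rho_1,\rho_2,\sigma_2)=(0,0,0)$. First I would check that in this regime the right-hand side of \eqref{eq:tenmor2} collapses to its two essential terms. Indeed every summand of the remainder $\mathcal R(t)$ in \eqref{eq:tenmor3} carries either the factor $\nabla V$ (killed by $\sigma_2=0$) or $\nabla_x|x|^{-\rho_\kappa}$ (killed by $\rho_\kappa=0$), so $\mathcal R(t)\equiv 0$; the exchange contribution is non-positive by \eqref{eq.HFNonl} read with $\rho_2=0$ and is discarded; and the leading term $\Xi$ is non-positive by \eqref{eq.equiv2}. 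What survives is
\begin{align*}
\sum_{j,\ell=1}^N\dot{\mathcal M}_{j,\ell}(t)
&\lesssim 2\sigma_1\sum_{j,\ell=1}^N\int_{\R^{2d}}\Delta^2_xa\,|u_j(x)|^2|u_{\ell}(y)|^2\,dxdy\\
&\quad-\sum_{j,k,\ell=1}^N\widetilde b_{jk}\int_{\R^{2d}}\Delta_xa\,\squad{|x|^{-(d-\gamma_1)}*|u_k|^p}|u_j(x)|^p|u_{\ell}(y)|^2\,dxdy,
\end{align*}
and by \eqref{eq.delta1}--\eqref{eq.delta2} both terms on the right are non-positive, since $\Delta_xa=(d-1)/|x-y|>0$ while $\Delta^2_xa\le 0$.

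Next I would integrate this differential inequality over $t\in\R$. The left-hand side telescopes to the boundary values $\sum_{j,\ell}\mathcal M_{j,\ell}(t)\big|_{-\infty}^{+\infty}$, and since $|(\nabla_x,\nabla_y)a|\le\sqrt2$, the Cauchy--Schwarz inequality applied to \eqref{eq:tenmor1} together with the conservation laws of Proposition \ref{CoLa} yield the uniform bound $\sup_{t}|\mathcal M_{j,\ell}(t)|\lesssim \|u_{j,0}\|_{H^2_x}^2\|u_{\ell,0}\|_{H^2_x}^2$. Summing in $j,\ell$ and moving the two non-positive right-hand terms to the left produces the global weighted space--time estimate
\begin{align*}
&\sigma_1\sum_{j,\ell=1}^N\int_{\R}\!\int_{\R^{2d}}\frac{|u_j(x)|^2|u_{\ell}(y)|^2}{|x-y|^3}\,dxdydt\\
&\quad+\sum_{j,k,\ell=1}^N\widetilde b_{jk}\int_{\R}\!\int_{\R^{3d}}\frac{|u_j(x)|^p|u_{\ell}(y)|^2|u_k(z)|^p}{|x-y|\,|x-z|^{d-\gamma_1}}\,dxdydzdt
\lesssim\sum_{j=1}^N\|u_{j,0}\|_{H^2_x}^4,
\end{align*}
valid for $d\ge 4$, where the $\sigma_1$--weight equals $(d-1)(d-3)/|x-y|^3$.

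The last step is localization. For $d\ge 4$ I would simply restrict each integrand to concentric balls: on $B^d_{\tilde x}(r)^2$ one has $|x-y|\le 2r$, whence $|x-y|^{-3}\ge(2r)^{-3}$, and on $B^d_{\tilde x}(r)^3$ one moreover has $|x-z|\le 2r$, so the full kernel is bounded below by a positive constant $c(r,d,\gamma_1)$. As all integrands are non-negative, each localized integral is at most $C(r)$ times the corresponding global one, uniformly in the centre $\tilde x$; taking $\sup_{\tilde x}$ and keeping the $t$--integral yields \eqref{eq:MorLoc1}.

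The genuinely different case---and the one I expect to be the main obstacle---is $d=3$. There the factor $(d-1)(d-3)$ vanishes and, by \eqref{eq.delta2}, $\Delta^2_x|x-y|=-c_2\,\delta_{x=y}$, so the coercive $\sigma_1$--term is supported on the diagonal and collapses to $c_2\sigma_1\sum_{j,\ell}\int_{\R^d}|u_j(x)|^2|u_{\ell}(x)|^2\,dx$, which localizes over the single ball $B^3_{\tilde x}(r)$. For the nonlinear term, however, the weight $\Delta_xa=2/|x-y|$ is still a genuine function, so the same diagonal collapse is not automatic; reproducing the two-variable diagonal form $|u_j(x)|^p|u_{\ell}(x)|^2|u_k(z)|^p$ over $B^3_{\tilde x}(r)^2$ in \eqref{eq:MorLoc2} is exactly the delicate point, and requires one to exploit the distributional identity \eqref{eq.delta2} at the level of the Hartree interaction (pairing the $\delta_{x=y}$ produced by the bilaplacian against the nonlinear factor) and to make this rigorous against the merely $H^2$ solution. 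Verifying along the way that the discarded bilaplacian Hessian terms \eqref{eq.quad1}--\eqref{eq.quad2} and the exchange terms indeed carry the correct sign then completes the argument.
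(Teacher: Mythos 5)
For $d\geq 4$ your argument is precisely the paper's proof: with $(\rho_1,\rho_2,\sigma_2)=(0,0,0)$ the remainder \eqref{eq:tenmor3} vanishes, the term $\Xi$ is discarded thanks to the sign information \eqref{eq.equiv2}, the $\sigma_1$ and Choquard terms acquire the favourable sign via \eqref{eq.delta1}--\eqref{eq.delta2}, the action is bounded uniformly in time by Cauchy--Schwarz and the conservation laws (this is \eqref{eq.lboundB}), and the localization is the translation-invariant kernel lower bound \eqref{eq.lboundA}, which, as you correctly observe, holds pointwise in $t$ with a majorant independent of $\tilde x$, so the supremum over the centre may be taken inside the time integral. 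The $\sigma_1$ part of the $d=3$ case (the Dirac mass in \eqref{eq.delta2} producing the diagonal density term) is also exactly as in the paper.

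The gap is the Choquard part of \eqref{eq:MorLoc2}, and the repair you sketch cannot be carried out. In the tensor inequality \eqref{eq:tenmor2} the distribution $\Delta^2_x|x-y|=-c_2\,\delta_{x=y}$ multiplies only the product $\sigma_1|u_j(x)|^2|u_{\ell}(y)|^2$ (and the discarded term $\Xi$), while the Choquard term is weighted by $\Delta_x|x-y|=2/|x-y|$, which in $d=3$ is a genuine locally integrable function; the delta and the nonlinearity never multiply each other anywhere in the identity, so there is no ``pairing of the $\delta_{x=y}$ against the nonlinear factor'' available at any level of rigour. You should also know that the paper possesses no additional idea here: its proof of the $d=3$ case is word for word the computation you performed, namely \eqref{eq.LocCor2} combined with \eqref{eq.lboundA}--\eqref{eq.lboundB}, and what that computation actually delivers is the \emph{off-diagonal} three-ball estimate, i.e.\ the $d=3$ analogue of \eqref{eq:MorLoc1} with $|u_\ell(t,y)|^2$ integrated over $B^3_{\tilde x}(r)^3$, together with the diagonal $\sigma_1$ term. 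The diagonal factor $|u_\ell(t,x)|^2$ printed in the Choquard part of \eqref{eq:MorLoc2} is not produced by the argument and is best read as an inaccuracy of the statement. Nothing downstream is affected: in the proof of Theorem \ref{thm:desl} for $d=3$ one only needs, after H\"older on $B^3_{x_n}(2)$, a lower bound of order $\varepsilon_1^{2p+2}$, and the off-diagonal form supplies it just as well, since $\round{\int_B |u_j|^p\,dx}^2\int_B|u_j|^2\,dx\gtrsim\round{\int_B|u_j|^2\,dx}^{p+1}$ for $p\geq2$. So your $d\geq4$ proof stands as the paper's, and your $d=3$ obstruction is real with respect to the literal statement; but the resolution is not a missing estimate --- it is that the off-diagonal bound is what is provable and what the paper actually uses.
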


\begin{proof}
We treat the general frame $d\geq 3$. The interaction inequality \eqref{eq:tenmor2} and the fact that $\mathcal R(t)=0$ bear to
\begin{align}\label{eq.LocCor1}
  \sum_{j,\ell=1}^N\mathcal{ \dot{M}}_{j,\ell}(t)
  \\
  \lesssim 2\sum_{j,\ell=1}^N\int_{\R^{2d}}
  \Delta^2_xa(x,y)\round{\nabla _{x}| u_j(x)|^2\nabla_{y}|u_{\ell}(y)|^2+\sigma_{1} | u_j(x)|^2|u_{\ell}(y)|^2} \,dxdy
 \nonumber
 \\
- \sum_{j,\ell, k}^N \widetilde b_{jk}\int_{\R^{3d}}\Delta_x a(x,y) \frac{1}{|x-z|^{d-\gamma_1}}
|u_j(x)|^{p}|u_{\ell}(y)|^2| u_k(z)|^p\,dxdydz,
\nonumber
\end{align}
where in the second line we used the first in \eqref{eq.leadtherm}. This implies, after integrating in time over the interval $J=[t_{1}, t_{2}]$ with $t_{1},t_{2}\in \R$ and looking at \eqref{eq.delta1}, \eqref{eq.delta2},
\begin{align}\label{eq.LocCor2}
\sum_{j,\ell=1}^N\sup_{t\in J}|\mathcal{M}_{j,\ell}(t)|
  \gtrsim -2\sigma_{1}\sum_{j,\ell=1}^N\int_{J}\int_{\R^{2d}}
  \Delta^2_xa(x,y) | u_j(x)|^2|u_{\ell}(y)|^2 \,dxdydt
 \\
 +\sum_{j,\ell, k}^N\widetilde b_{jk}\int_{J}\int_{\R^{3d}}\Delta_x a(x,y) \frac{1}{|x-z|^{d-\gamma_1}}
|u_j(x)|^{p}|u_{\ell}(y)|^2| u_k(z)|^p\,dxdydzdt.
\nonumber
\end{align}
Both the inequalities \eqref{eq:MorLoc1} and s\eqref{eq:MorLoc2} follow finally from the bounds
\begin{align}\label{eq.lboundA}
\inf_{x,y,z\in B_{\tilde x}^d(r)}\round{\frac{1}{|x-y|}, \frac{1}{|z-y|}}=\inf_{x,y,z\in B_0^d(r)}\round{\frac{1}{|x-y|}, \frac{1}{|z-y|}}\gtrsim 1,\\
\label{eq.lboundB}
\sum_{j,\ell=1}^N\sup_{t\in J}|\mathcal{M}_{j,\ell}(t)|\lesssim \sum_{j, \ell=1}^{N}\sup_{t\in J}\|\nabla_{x}u_{j}\|_{L^{2}_{x}}^{2}\|u_{\ell}\|_{L^{2}_{x}}^{2}\lesssim \sum_{j=1}^{N}\|u_{j,0}\|_{H^{2}_{x}}^{4} ,
\end{align}
once one lets $t_{1}\rightarrow -\infty$ and $t_{2}\rightarrow \infty$.
 \end{proof}
 We have also
 
 \begin{proposition}\label{higcorrest}
 Assume $d\geq 5$, $p=2$ or $(\rho_{1},\rho_{2},\sigma_2)\neq(0,0,0)$. Let $(u_j)^N_{j=1}\in C(\R,H^2(\R^d)^N)$ be as in Proposition \ref{CoLa}. Then, selecting $a(x,y)=|x-y|$, $r>0$ and $\tilde x\in\R^d$, one gets the following localized estimates. For $d\geq6$ 
\begin{align} \label{eq:MorLoc3}
 \sum_{j,\ell=1}^N\int_{\R}\sup_{\tilde x\in\R^d}\int_{B_{\tilde x}^d(r)^2}
 |u_j(t,x)|^{2} |u_\ell(t,y)|^{2}
\,dxdydt
 \leq C\sum_{j=1}^N\|u_{j,0}\|^4_{H^2_x},
  \end{align}
  For $d=5$, 
  \begin{align} \label{eq:MorLoc4}
\sum_{j, \ell=1}^N\int_{\R}\sup_{\tilde x\in\R}\int_{B_{\tilde x}^5(r)}  |u_j(t,x)|^{2} |u_\ell(t,x)|^{2}
 \,dxdt \leq C\sum_{j=1}^N\|u_{j,0}\|^4_{H^2_x}.
  \end{align}
\end{proposition}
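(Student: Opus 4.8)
The plan is to apply the tensor Morawetz inequality \eqref{eq:tenmor2} of Lemma \ref{lem:tenmor} with $a(x,y)=|x-y|$, but now exploiting the \emph{second} form of the leading coefficient in \eqref{eq.leadtherm}, namely $\Xi(x,y)=-\Delta^3_x a(x,y)\,|u_j(x)|^2|u_\ell(y)|^2$, which is the one available precisely when $d\geq 5$. This is exactly why the admissible dimension jumps from $d\geq 3$ (as in Proposition \ref{lowcorrest}) to $d\geq 5$: only for $d\geq 5$ does the triple Laplacian of $|x-y|$ furnish a \emph{positive} bilinear kernel. First I would integrate the resulting differential inequality over a slab $J=[t_1,t_2]$ and, at the very end, let $t_1\to-\infty$ and $t_2\to+\infty$.

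The leading term is read off from \eqref{eq.delta3}: for $d\geq 6$ one has $\Delta^3_x|x-y|=(d-3)(d-5)/|x-y|^5>0$, while for $d=5$ it degenerates to $c_3\,\delta_{x=y}$. Hence the contribution $-2\sum_{j,\ell}\int\Xi$ produces, after time integration, the nonnegative spacetime quantity $\sum_{j,\ell}\int_\R\int_{\R^{2d}}|u_j(x)|^2|u_\ell(y)|^2/|x-y|^5\,dx\,dy\,dt$ when $d\geq 6$, and the diagonal quantity $\sum_{j,\ell}\int_\R\int_{\R^d}|u_j(x)|^2|u_\ell(x)|^2\,dx\,dt$ when $d=5$. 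Every term on the right-hand side of \eqref{eq:tenmor2} that is not part of $\mathcal R(t)$ is nonpositive and may be discarded: the term $2\sigma_1\int\Delta^2_x a\,|u_j|^2|u_\ell|^2$ is $\leq 0$ since $\Delta^2_x|x-y|=-(d-1)(d-3)/|x-y|^3<0$ for $d\geq 4$ by \eqref{eq.delta2}, and the Choquard term $-\sum\widetilde b_{jk}\int\Delta_x a\,[\cdots]$ is $\leq 0$ because $\Delta_x a=(d-1)/|x-y|>0$ and the Riesz kernel is positive. Localizing through the elementary bound $1/|x-y|^5\gtrsim r^{-5}$ on $B^d_{\tilde x}(r)\times B^d_{\tilde x}(r)$ (exactly as in \eqref{eq.lboundA}) then yields the off-diagonal estimate \eqref{eq:MorLoc3} for $d\geq 6$ and, directly from the Dirac contribution, the diagonal estimate \eqref{eq:MorLoc4} for $d=5$.

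It remains to bound the two survivors on the opposite side of the inequality. The boundary contribution is handled as in \eqref{eq.lboundB}: since $|(\nabla_x,\nabla_y)a|\leq\sqrt2$, Cauchy--Schwarz gives $\sup_{t\in J}|\mathcal M_{j,\ell}(t)|\lesssim\sup_{t\in J}\|\nabla_x u_j\|_{L^2_x}^2\|u_\ell\|_{L^2_x}^2\lesssim\|u_{j,0}\|_{H^2_x}^4$, using the conservation of mass and of the $H^2$-norm from Proposition \ref{CoLa}. For the remainder $\int_J\mathcal R(t)\,dt$: in the homogeneous, potential-free regime $(\rho_1,\rho_2,\sigma_2)=(0,0,0)$ one has $\mathcal R\equiv 0$, since the weight gradients $\nabla_x|x|^{-\rho_\kappa}$ and the factor $\sigma_2$ all vanish, so nothing more is required. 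In the genuinely non-translation-invariant case $(\rho_1,\rho_2,\sigma_2)\neq(0,0,0)$ I would integrate out the $y$-density first: because $|\nabla_x a|=1$, each term of \eqref{eq:tenmor3} is dominated in absolute value by $M(u_\ell)=\|u_{\ell,0}\|_{L^2_x}^2$ times a single-density weighted expression in $x$ of exactly the type appearing on the right-hand side of the single-particle Morawetz Corollary \ref{singmor}; the potential piece $2\sigma_2\int\nabla_x a\cdot\nabla V(x)\,|u_j|^2|u_\ell|^2$ collapses in the same way to $\int\nabla a\cdot\nabla V\,|u_j|^2$, governed by hypothesis \eqref{eq.hopot0}. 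Integrating Corollary \ref{singmor} in time then gives a bound on these single-density spacetime integrals uniform in $J$, hence on $\int_J\mathcal R(t)\,dt$.

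The main obstacle is precisely this control of $\mathcal R(t)$ when $(\rho_1,\rho_2,\sigma_2)\neq(0,0,0)$. Unlike the leading bilinear kernel, the remainder is \emph{not} sign-definite: for the potential term, symmetrizing in $x\leftrightarrow y$ produces $\tfrac12\int\int\frac{(x-y)\cdot(\nabla V(x)-\nabla V(y))}{|x-y|}\rho(x)\rho(y)$, whose sign would require convexity of $V$ rather than the mere radial monotonicity \eqref{eq.hopot0}, and the inhomogeneous weight terms likewise carry the indefinite factor $(x-y)\cdot x$. The point is therefore to avoid asking for a sign and instead absorb $\mathcal R$ into the conserved quantities, reducing it via mass conservation and $|\nabla_x a|=1$ to the single-particle weighted spacetime integrals already estimated in Corollary \ref{singmor}; it is exactly here that the restriction $d\geq 5$ and the loss of translation invariance become intertwined. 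Once $\int_\R\mathcal R(t)\,dt\lesssim\sum_j\|u_{j,0}\|_{H^2_x}^4$ is secured, letting $t_1\to-\infty$ and $t_2\to+\infty$ closes both \eqref{eq:MorLoc3} and \eqref{eq:MorLoc4}.
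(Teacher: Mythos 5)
Your proposal is correct and follows essentially the same route as the paper's proof: you invoke the tensor Morawetz inequality \eqref{eq:tenmor2} with the second choice of $\Xi$ in \eqref{eq.leadtherm}, exploit the sign of $\Delta^3_x|x-y|$ from \eqref{eq.delta3} (positive kernel for $d\geq 6$, Dirac mass for $d=5$), discard the non-positive terms, bound $\sup_{t\in J}|\mathcal M_{j,\ell}(t)|$ by conservation laws as in \eqref{eq.lboundB}, and control $\int_J|\mathcal R(t)|\,dt$ by factoring out the mass of $u_\ell$ via $|\nabla_x a|\leq 1$ and appealing to the single-particle Morawetz inequality of Corollary \ref{singmor} together with \eqref{eq.singMor}. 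This is precisely the argument in the paper, including your key observation that $\mathcal R$ is not sign-definite and must be absorbed through mass conservation and the time-integrated single-particle estimate rather than discarded.
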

\begin{proof}
By \eqref{eq:tenmor2} and removing the non-positive terms appearing in \eqref{eq.LocCor1}  we can write
\begin{align}\label{eq.LocCor3}
  \sum_{j,\ell=1}^N\mathcal{ \dot{M}}_{j,\ell}(t)
  \lesssim
  2\sum_{j,\ell=1}^N\int_{\R^{2d}}
 - \Delta^3_xa(x,y)| u_j(x)|^2|u_{\ell}(y)|^2 \,dxdy+\mathcal R(t),
\end{align}
where we applied now the second in \eqref{eq.leadtherm}. Then
 because of \eqref{eq.delta1}, \eqref{eq.delta2}, indeed one attains
\begin{align}\label{eq.LocCor4}
\int_{J} |\mathcal R(t)| \, dt +\sum_{j,\ell=1}^N\sup_{t\in J}|\mathcal{M}_{j,\ell}(t)|
\\
  \gtrsim\sum_{j,\ell=1}^N\int_{J}\int_{\R^{2d}}
  \Delta^3_xa(x,y) | u_j(x)|^2|u_{\ell}(y)|^2 \,dxdydt,
  \nonumber
\end{align}
with $J$ as above. It turns up also, by \eqref{eq.dMor} and $\nabla_x a(x,y)\lesssim 1,$ that
\begin{align}\label{eq.LocCor5}
\int_{J} |\mathcal R(t)| \, dt 
\\
\lesssim 2\sigma_2\sum_{j,\ell=1}^N \int_{\R^{2d}}  |\nabla V(x)|| u_j(x)|^2|u_{\ell}(y)|^2\,dxdy&
 \nonumber
 \\
+ \sum_{j, k, \ell}^Nb_{jk}\int_{J}\int_{\R^{2d}} \frac 1{|x|}\squad{|x|^{-(d-\gamma_1)}* (|x|^{-\rho_{1}}| u_k(x)|^p)}|x|^{-\rho_{1}}|u_j(x)|^{p}|u_{\ell}(y)|^2\,dxdydt
\nonumber
\nonumber\\
+2b\sum_{j,k, \ell=1}^N\int_{\R^{2d}}\frac 1{|x|}\left(\squad{|x|^{-(d-\gamma_2)}*(|x|^{-\rho_{2}}| u_k|^2 )} |x|^{-\rho_{2}} |u_j(x)|^2|u_{\ell}(y)|^2\,\right.
\nonumber
 \\
 \left. - \squad{|x|^{-(d-\gamma_2)}*({|x|^{-\rho_{2}}\overline u_k u_j  )}}|x|^{-\rho_{2}} u_k(x)  \overline u_j(x) |u_{\ell}(y)|^2\right)\,dxdydt
\nonumber\\
\lesssim \sum_{j,\ell=1}^N\sup_{t\in J}|M_{j}(t)|\|u_{\ell}\|_{L^{2}_{x}}^{2}\lesssim  \sum_{j=1}^{N}\|u_{j,0}\|_{H^{2}_{x}}^{4} ,
\end{align}
where the inequalities in the last line are achieved by \eqref{eq.singmor} and \eqref{eq.singMor}. The above estimate combined with \eqref{eq.LocCor4}, \eqref{eq.delta3} and the bounds \eqref{eq.lboundA}, \eqref{eq.lboundB} guarantees \eqref{eq:MorLoc3} and \eqref{eq:MorLoc4}. 
 \end{proof}

\section{Decay in energy space: proof of Theorem \ref{thm:desl}}\label{DecSolu}
The proof of  the main theorem concerning the dacay of the solutions to HFC4 equations in the energy space is given in this section.

{\bf Proof of Theorem \ref{thm:desl}.}
We prove \eqref{eq:desol0} for a suitable $2<r<2d/(d-4)$ if $d\geq 5$ (for $2<r<+\infty$, if $d=3,4$), the result for the general case will follow by the conservation of mass \eqref{eq:mcon}, the kinetic energy 
\eqref{eq:econs} and interpolation. We want to prove then 
\begin{equation}\label{eq:potenergy2}
\lim_{t\rightarrow \pm \infty} \|u_{j}(t)\|_{L^{\frac{2d+4}{d}}_x}=0,
\end{equation}
for all $j=1,\dots, N$. Moreover we focus on forward times only, because
the backwards can be managed similarly. 
Assume that \eqref{eq:potenergy2} does not hold (see \cite{CT} and \cite{Vis}). Hence we can find a sequence
$\{t_n\}$ with $t_n \to +\infty$ and a $\delta>0$ such that
\begin{equation}\label{eq:escseq}
 \inf_n \|u_{j}(t_n, x)\|_{\Lin^{\frac{2d+4}{d}}_x}=\delta,
\end{equation}
for some $j\in\{1, \dots, N\}$. The Gagliardo-Nirenberg inequality (see \cite{TeTzVi})
\begin{equation}\label{eq:GNloc3}
\|u_{j}\|_{L^{\frac{2d+4}{d}}_x}^{\frac{2d+4}{d}}\lesssim \left(\sup_{x\in \R^d} \|u_{j}\|_{L^2(B^d_{\widetilde x}(1))}\right)^{\frac{4}{d}} 
\|u_{j}\|^2_{H^2_x},
\end{equation}
allows us to say that there exists $x_n \in \R^d$ and a $\varepsilon_0>0$ such that
\begin{equation}\label{eq:seqspac}
\inf_{n} \|u_j(t_n, x)\|_{\Lin^2(B_{x_n}^d(1))}=\varepsilon_0.
\end{equation}
Fix a cut-off function $\phi(x)\in C^\infty_0(\R^d)$, so as
$\phi(x)=1$ for $B^d_{0}(1)$ and $\phi(x)=0$ for $x\notin B^d_{0}(2)$.
Then by choosing
$a(x)=\phi (x-x_n)$ we get from the relation
\begin{align*}
\partial_{t}|u_{j}|^{2}=-2 \nabla_{x} \Im\left(\overline{u}_{j}\nabla_{x}(\Delta_{x}-\sigma_{1}) u_{j}\right)+2 \nabla_{x}\Im\left(\nabla_{x} \overline{u}_{j} D^{2}_{x} u_{j}\right),
\end{align*}
the following
\begin{equation*}
\begin{split}
\left|\frac d{dt} \int_{\R^d} \phi (x -x_n)  | u_{j}(t,x)|^2 dx \right|\lesssim\sigma_{1}\left|\int_{\R^d} \nabla_x\phi (x -x_n) \Im(\nabla_xu_{j}(t,x)\overline u_{j}(t,x))dx\right|\\
+\left|\int_{\R^d} \Delta_x\phi (x -x_n) \Im(\Delta_xu_{j}(t,x)\overline u_{j}(t,x))dx\right|+\\
+\left|\int_{\R^d} \nabla_x\phi (x -x_n) \Im(\Delta_xu_{j}(t,x)\nabla_x\overline u_{j}(t,x)) dx\right|
\lesssim \sup_t \|u_{j}(t,x)\|_{H^2_x}^2.
\end{split}
\end{equation*}
Consequently, the fundamental theorem of calculus implies 

\begin{equation}
\left|\int_{\R^d} \varphi(x -x_n) |u_j(w_{2}, x)|^2 dx - \int_{\R^d} \varphi(x -x_n) |u_j(w_{1}, x)|^2 dx\right|\leq \widetilde C |w_{1}-w_{2}|,
\end{equation}
for a $ \widetilde C>0$ independent by $n$. Choosing $w_{1}=t_n$ and $w_{2}=t_{n}+ t^*$, with $ t^*>0$,  we infer
\begin{equation}
\int_{\R^d} \varphi(x -x_n) |u_j(t_{n}+ t^*, x)|^2 dx\geq  \int_{\R^d} \varphi(x -x_n) |u_j(t_n, x)|^2 dx - \widetilde Ct^{*},
\end{equation}
which reads, since the support property of $\phi$, 

\begin{equation}
\int_{B_{x_n}^d(2)} |u_j(t_{n}+t^{*}, x)|^2 dx\geq  \int_{B_{x_n}^d(1)} |u_j(t_n, x)|^2 dx - \widetilde Ct^{*}.
\end{equation}
By the previous estimates and \eqref{eq:seqspac}, we inherit  
\begin{equation}\label{eq:infbound1}
 \|u_j(t, x)\|_{\Lin^2(B_{x_n}^d(2))}\geq \varepsilon_1,
\end{equation}
with $\varepsilon_{1}>0$, for all  $t\in (t_n, t_n+ t^*)$, provided that $ t^*>0$ is suitable small in order also to come by the time intervals disjoint.
Furthermore, by means of the lower bound \eqref{eq:infbound1} we have 
\begin{equation}\label{eq:infbound2}
\inf_{n}\round{\inf_{t\in (t_n, t_n+t^*)} \sum_{j=1}^N \|u_j(t)\|^{2}_{L^{2}_x(B_{x_n}^d(2))}} \gtrsim \varepsilon^{2}_1>0.
\end{equation}
Let us now complete the proof by distinguishing several different cases. 
\subsubsection*{Let be $d\geq 3$, $p> 2$, $(b_{jj},\sigma_1)\neq(0,0),$ for all $j=1, \dots, N$, $(\rho_{1},\rho_{2})=(0,0)$ and $\sigma_2=0.$\\
Case $d\geq 4$}  We carry out, by H\"older inequality and \eqref{eq:infbound2},
\begin{align*} 
\sum_{j, k, \ell=1}^N\widetilde b_{jk}\int_{\R}\sup_{\tilde x\in\R^d}\int_{B_{\tilde x}^d(2)^3}|u_j(t,x)|^{p}|u_\ell(t,y)|^{2}|u_k(t,z)|^{p}
 \,dxdydzdt 
   \nonumber \\
    +\sigma_{1}  \sum_{j,\ell=1}^N\int_{\R}\sup_{\tilde x\in\R^d}\int_{B_{\tilde x}^d(r)^2}
 |u_j(t,x)|^{2} |u_\ell(t,y)|^{2}  \,dxdydzdt 
\nonumber \\
   \geq \inf_{j} \widetilde b_{jj} \int_{\R} \round{\int_{B_{x_n}^d(2)}|u_j(t,x)|^{2}
 \,dx}^{\frac{2p+2}2}dt +  \sigma_{1} \int_{\R} \round{\int_{B_{x_n}^d(2)}|u_j(t,x)|^{2}
 \,dx}^{2}dt 
 	\nonumber\\
 \gtrsim    \sum_{n} \int_{t_n}^{t_n+t^*} (\varepsilon^{2p+2}_1+\sigma_{1}\varepsilon^{4}_1)\,dt\gtrsim \sum_{n} t^*(\varepsilon^{2p+2}_1+\sigma_{1}\varepsilon^{4}_1)=\infty,&
\end{align*}
which is in contradiction with \eqref{eq:MorLoc1}. \\
\subsubsection*{Case $d=3$} Arguing in a similar manner as aforesaid, by an application of the H\"older inequality, one obtain the inequality
\begin{equation} 
\begin{split}
\sum_{j, k, \ell=1}^N\widetilde b_{jk}\int_{\R}\sup_{\tilde x\in\R^d}\int_{B_{\tilde x}^3(2)^2}|u_j(t,x)|^{p}|u_\ell(t,
x)|^{2}|u_k(t,z)|^{p}
 \,dxdzdt 
   \nonumber \\
    +\sigma_{1}  \sum_{j,\ell=1}^N\int_{\R}\sup_{\tilde x\in\R^d}\int_{B_{\tilde x}^3(r)}
 |u_j(t,x)|^{2} |u_\ell(t,x)|^{2}  \,dxdt 
  \nonumber\\
  \gtrsim \sum_{n} t^*(\varepsilon^{2p+2}_1+\sigma_{1}\varepsilon^{4}_1)=\infty,
\end{split}
\end{equation}
which denies \eqref{eq:MorLoc2}. 
\subsubsection*{Let be $p=2$ or  $(\rho_{1},\rho_{2},\sigma_{1})\neq(0,0,0)$.\\
Case $d\geq 5$} We will focus on $d\geq 6$ and utilize Proposition \ref{higcorrest}. Then it is possible to accomplish 
\begin{equation*} 
\begin{split}
 \sum_{j, \ell=1}^N\int_{\R}\sup_{\tilde x\in\R^d}\int_{B_{\tilde x}^d(2)^{2}}|u_j(t,x)|^{2}|u_\ell(t,y)|^{2}
 \,dxdydt   \\
  \gtrsim  \sum_{j,\ell=1}^N\sum_{n} \int_{t_n}^{t_n+t^*}\int_{B_{ x_n}^d(2)^{2}}|u_j(t,x)|^{2}|u_\ell(t,y)|^{2}
  \,dxdydt
  \nonumber\\
   \gtrsim   \sum_{n} \int_{t_n}^{t_n+t^*} \varepsilon^{4}_1\,dt= \sum_{n} t^*\varepsilon^{4}_1\,dt=\infty,
\end{split}
\end{equation*}
which is in conflict with \eqref{eq:MorLoc3}. We skip the case $d=5$ because it can be arranged alike by taking advantage of \eqref{eq:MorLoc4}. Hence the proof is now complete.\\

 \section{Scattering for the HFC4}\label{NFC4scat}
We want  to prove Theorem \ref{thm:scattHFC4}. We acquire first the essential space-time summability for the solutions to \eqref{eq:HFC4}, then we display the scattering. We have the following
 \begin{lemma}\label{StriHFC4}
 Assume $(u_j)_{j=1}^N\in  C(\R,H^2(\R^d)^N)$ as in Theorem \ref{thm:scattHFC4}. We have
 \begin{align}\label{eq:StriHFC4}
 (u_j)_{j=1}^N\in L^q(\R, W^{2,r}(\R^d)^N),
 \end{align}
for every pair $(q,r)\in \mathcal B$.
\end{lemma}
\begin{proof}
We consider  the integral operator associated to \eqref{eq:HFC4}, that is
\begin{align}\label{eq:opintHFC4}
\sum_{j=1}^{N} u_{j}(t)=e^ {it  \Delta_x}u_{j,0} +\sum_{j,k=1}^{N}  \int_{0}^{t} e^ {i(t-\tau)(\Delta^2_x-\sigma_{1}\Delta_{x}+\sigma_{2} V)}  \mathcal F(\tau, u_j , u_k) d\tau,
\end{align}
moreover, for $t_{0}>0$ we introduce the auxiliary spaces
\begin{align}\label{eq.1st2}
   \norm{u}{ X_{(t_{0}, \infty)}} =\sup_{(q,r)\in \mathcal B} \left\{\norm{u}{L^{q} ((t_{0}, \infty), L^{r}_x)}\right\}, 
   \\
   \label{eq.1st3}
     \norm{u}{\widetilde X_{(t_{0}, \infty)}} =\inf_{(\tilde q' ,\tilde r')\in \mathcal B^{'}} \left\{\norm{u}{L^{\tilde q'}((t_{0}, \infty), L^{\widetilde r'}_x)}\right\}.
\end{align}
Let us start by dealing with $p\geq 2$, $(b_{jj},\sigma_1)\neq(0,0),$ for all $j=1, \dots, N$ and $(\rho_{1},\rho_{2})=(0,0)$. We will 
restrict to $d\geq 3$ if $p>2$ and $b=0, \sigma_{2}=0$, otherwise to $d\geq 5$.
\\
\\
In order to get the property \eqref{eq:StriHFC4}, we choose now
$( q_1', r_1')$ so that 
\begin{equation}\label{eq:pairHF1}
 ( q_1,r_1):= \left(\frac{8p}{dp-d-\gamma_1},\frac{2dp}{d+\gamma_1}\right)\in \mathcal {B}.
\end{equation}
In this way, Strichartz estimates \eqref{eq:StriBiharm}, H\"older and Hardy-Littlewood-Sobolev inequalities bring to 
\begin{align}\label{eq.nnl1}
\sum_{s=0}^1\left\|\sum_{j,k=1}^N b_{jk}\Delta_{x}^{s}\round{ \squad{|x|^{-(d-\gamma_1)}*| u_k|^p} | u_{j}|^{p-2}  u_j}\right\|_{\widetilde X_{(t_{0}, \infty)}}\\
\lesssim\sum_{s=0}^1\left\|\sum_{j,k=1}^N b_{jk}\Delta_{x}^{s}\round{ \squad{|x|^{-(d-\gamma_1)}*|u_k|^p} | u_j|^{p-2}  u_j}\right\|_{L^{q_1'}((t_{0}, \infty), L^{r_1'}_x)}
\nonumber\\
\lesssim \sum_{s=0}^1\norm{ \sum^N_{j,k=1}b_{jk}\|\Delta_{x}^{s}u_j\|_{L^{r_1}_x}\|u_k\|_{L_x^{r_1}}^{p} \|u_j\|_{L_x^{r_1}}^{p-2}}{L^{q_1'}((t_{0}, \infty))}
\nonumber\\
+\sum_{s=0}^1\left\|\sum_{j,k=1}^N b_{jk}  \squad{|x|^{-(d-\gamma_1)}*\Delta_{x}^{s}| u_k|^p} | u_j|^{p-2}  u_j\right\|_{L^{q_1'}((t_{0}, \infty), L^{r_1'}_x)}
\nonumber\\
\lesssim \sum_{s=0}^1\sum_{j,k=1}^N b_{jk}\norm{\|\Delta_{x}^{s}u_j\|_{L^{r_1}_x}\|u_k\|_{L_x^{r_1}}^{p} \|u_j\|_{L_x^{r_1}}^{p-2}+\|\Delta_{x}^{s} u_k\|_{L_x^{r_1}}\|u_k\|_{L_x^{r_1}}^{p-1}\|u_j\|_{L_x^{r_1}}^{p-1}}{L^{q_1'}((t_{0}, \infty))}
\nonumber\\
\lesssim\sum_{s=0}^1 \sum_{j,k=1}^N\Big\|
    \|\Delta_{x}^{s}u_{k}\|_{L^{r_1}_x}
   \|u_{j}\|_{L_x^{r_1}}^{(2p-2)}\Big\|_{L^{q_1'}((t_{0}, \infty))}.
\nonumber
\end{align} 
Here we utlized the estimate given in \cite{Kat}, that is, if $\nu \geq 1$, then for $1<r_{1}, r_{2}<\infty$ and $1<r_{1} \leqslant \infty$ 
\begin{equation}\label{eq:HiOrDer}
\left\|\Delta_{x}\left(|u|^{\nu} u\right)\right\|_{L_{x}^{r}} \lesssim\|u\|_{L_{x}^{r_{1}}}^{\nu}\|\Delta_{x} u\|_{L_{x}^{r_{2}}}, \qquad \frac{1}{r}=\frac{\nu}{r_{1}}+\frac{1}{r_{2}}.
\end{equation}
 We take $\theta_1=(q_1-q_1')/(2pq_1'-2q'_1)$. Direct calculations unveil that 
$$
\frac 1{q_1'}=\frac{2(p-1)\theta_1+1}{q_1}, \quad 1-\theta_1=\frac {(2p-1)q'_1-q_1}{(2p-2)q_1'}
$$
and  $\theta_{1}\in(0,1)$ by virtue of \eqref{eq:bs}, \eqref{eq:bsII}. Accordingly, the last term of the inequality \eqref{eq.nnl1} is not greater than
 \begin{align}\label{eq.nnl2}
 \sum_{s=0}^1\sum_{j,k=1}^N  \Big\|  \|\Delta_{x}^{s}u_{k}\|_{L^{r_1}_x}
   \|u_{j}\|_{L_x^{r_1}}^{(2p-2)(1-\theta_1)}\|u_{j}\|_{L_x^{r_1}}^{(2p-2)\theta_1}\Big\|_{L^{q_1'}((t_{0}, \infty))}
   \\
    \lesssim 
   \sum_{j,k=1}^N  \Big\|  \|(1-\Delta_{x})u_{k}\|_{L^{r_1}_x} 
   \|u_{j}\|_{L_x^{r_1}}^{(2p-2)(1-\theta_1)}\|(1-\Delta_{x})u_{j}\|_{L_x^{r_1}}^{(2p-2)\theta_1}\Big\|_{L^{q_1'}((t_{0}, \infty))}
 \nonumber \\
   \lesssim 
    \sup_{j=1,\dots, N} \|u_{j}\|_{L^{\infty}((t_{0}, \infty), L^{r_1}_x)}^{(2p-2)(1-\theta_1)} \sum_{j,k=1}^N  \Big\|  \|(1-\Delta_{x})u_{k}\|_{L^{r_1}_x} 
   \|(1-\Delta_{x})u_{j}\|_{L_x^{r_1}}^{(2p-2)\theta_1}\Big\|_{L^{q_1'}((t_{0}, \infty))}
  \nonumber \\
    \lesssim \sup_{j=1,\dots, N} \|u_{j}\|_{L^{\infty}((t_{0}, \infty), L^{r_1}_x)}^{(2p-2)(1-\theta_1)}\round{\sum_{k=1}^N 
  \|(1-\Delta_{x})u_{k}\|_{L^{q_1}((t_{0}, \infty), L^{r_1}_x)}}^{(2p-2)\theta_1+1},
  \nonumber
\end{align}
where in the second inequality we used the Sobolev embedding and the fact that $\Delta_{x}/(1-\Delta_{x})$ is a pseudodifferential operator of order $0$, wihch is $L^{r}_{x}$-bounded, for $1<r<\infty$. We single out $( q_2', r_2')$ by taking $p=2$ in \eqref{eq:pairHF1}, that is 
\begin{equation*}
 ( q_2,r_2):= \left(\frac{8}{d-\gamma_1},\frac{4d}{d+\gamma_1}\right)\in \mathcal {B},
\end{equation*}
hence we get, analogously as above, 
\begin{align}\label{eq.nnl3}
\sum_{s=0}^1 \sum^N_{j,k=1}b\left\| \Delta_{x}^{s}\round{ \squad{|x|^{-(d-\gamma_2)}*| u_k|^2}    u_j-\squad{|x|^{-(d-\gamma_2)}*\overline u_{k} u_j  } u_k}\right\|_{\widetilde X_{(t_{0}, \infty)}}
\nonumber\\
\lesssim\sum_{s=0}^1 \sum^N_{j,k=1}b\left\| \Delta_{x}^{s}\round{ \squad{|x|^{-(d-\gamma_2)}*| u_k|^2}    u_j-\squad{|x|^{-(d-\gamma_2)}*\overline u_{k} u_j  } u_k}\right\|_{L^{q_2'}((t_{0}, \infty), L^{r_2'}_x)}
\nonumber\\
\lesssim\sum_{s=0}^1 \sum^N_{j,k=1}\Big\|
    \|\Delta_{x}^{s}u_{k}\|_{L^{r_2}_x}
   \|u_{j}\|_{L_x^{r_2}}^{2}\Big\|_{L^{q_2'}((t_{0}, \infty))}
\nonumber\\
   \lesssim 
 \sum^N_{j,k=1} \Big\|  
   \|u_{j}\|_{L_x^{r_2}}^{2(1-\theta_2)}\|(1-\Delta_{x})u_{k}\|_{L^{r_2}_x}^{2\theta_2+1}\Big\|_{L^{q_2'}((t_{0}, \infty))}
  \nonumber \\
     \lesssim \sup_{j=1,\dots, N} \|u_{j}\|_{L^{\infty}((t_{0}, \infty), L^{r_2}_x)}^{2(1-\theta_2)}\round{\sum_{k=1}^N 
  \|(1-\Delta_{x})u_{k}\|_{L^{q_2}((t_{0}, \infty), L^{r_2}_x)}}^{2\theta_2+1},
\end{align} 
where $\theta_2=(q_2-q_2')/2q_2'\in (0,1)$ and such that
$$
\frac 1{q_2'}=\frac{2\theta_1+1}{q_2}.
$$
An use of  \eqref{eq.nnl1}, \eqref{eq.nnl2} and \eqref{eq.nnl3}
leads to
\begin{align}\label{eq.finest1}
\sum_{j=1}^N\|(1-\Delta_{x})u_{j}\|_{X_{(t_{0}, \infty)}}
\lesssim \sum_{j=1}^N\|u_{j,0}\|_{H^2_x}
\\
+ \sup_{j=1,\dots, N}\|u_{j}\|_{L^{\infty}((t_{0}, \infty), L^{r_1}_x)}^{(2p-2)(1-\theta_1)}\round{\sum_{k=1}^N 
  \|(1-\Delta_{x})u_{k}\|_{ X_{(t_{0}, \infty)}}}^{(2p-2)\theta_1+1}
 \nonumber \\
  +\sup_{j=1,\dots, N} \|u_{j}\|_{L^{\infty}((t_{0}, \infty), L^{r_2}_x)}^{2(1-\theta_2)}\round{\sum_{k=1}^N 
  \|(1-\Delta_{x})u_{k}\|_{ X_{(t_{0}, \infty)}}}^{2\theta_2+1},
  \nonumber
\end{align}
with 
$$
\lim _{t_{0}\rightarrow +\infty}\round{\|u_{j}\|_{L^{\infty}((t_{0}, \infty), L^{r_1}_x)}+ \|u_{j}\|_{L^{\infty}((t_{0}, \infty), L^{r_2}_x)}}=0,
$$
for any $j=1,\dots,N$, by Theorem \ref{thm:desl}. Then, picking up $t_{0}$ sufficiently large we earn
$(u_j)_{j=1}^N\in L^{q}((t_{0},+\infty), W^{2,r}(\R^d)^N).$ As well, one can  infer $(u_j)_{j=1}^N\in L^{q_1}((-\infty, -t_{0}), W^{2,r_1}(\R^d)^N)$ and in conclusion, by continuity we have  $(u_j)_{j=1}^N\in L^q(\R, W^{2,r}(\R^d)^N)$.\\
\\
Let us manage $d\geq 5$, for $p=2$ or $(\rho_{1},\rho_{2}, \sigma_2) \neq (0,0,0)$.
\\
\\
We choose now the biharmonic-admissible pairs
\begin{equation}\label{eq:pairHF3}
 ( q_3,r_3):= \left(\frac{8p}{dp-d-\gamma_1+2\rho_{1}},\frac{2dp}{d+\gamma_1-2\rho_{1}}\right),
\end{equation}
\begin{equation}\label{eq:pairHF6}
 ( q_4,r_4):= \left(\frac{8(2p-1)}{d(2p-1)-(d+4+2 \gamma_{1}-4 \rho_{1})},\frac{2d(2p-1)}{d+4+2\gamma_{1}-4\rho_{1}}\right)
\end{equation}
and
\begin{equation}\label{eq:pairHF3a}
 ( q_5,r_5):= \left(\frac{8(2p-1)}{d(2p-1)-(d+6+2 \gamma_{1}-4 \rho_{1})},\frac{2d(2p-1)}{d+6+2\gamma_{1}-4\rho_{1}}\right).
\end{equation}
Again we have similarly as above, for $i=3,4, 5$,
\begin{equation}\label{eq:pairHF4}
\frac{d-4}{2d}<\frac 1{r_{i}}<\frac 12, \quad \frac 1{r_{4}}, \frac 1{r_{5}}>\frac 1d,
\end{equation}
with the last inequalities valid for $d\geq 5$ and $(2p-1)q_{i}'>q_{i}$ which is equivalent  to
\begin{equation}\label{eq:pairHF5}
\frac 1{q_{i}}>\frac 1{2p},
\end{equation}
always satisfied by means of \eqref{eq:bs}, \eqref{eq:bsII} and \eqref{eq:bsIII}. Additionally, if we pick up suitable $\epsilon_{i}>0$ such that $\epsilon_{i}\rightarrow 0$, we get by a continuity argument that
\begin{align}\label{eq.index}
\frac 1{r^{\pm}_{i}(\epsilon_{i})}=\frac1{r_{i}\pm \epsilon_{i}}=\frac1{r_{i}}\mp \frac {\epsilon_{1}}{r_{i}(r_{i}\pm \epsilon_{i}) },\quad \frac 1{q^{\pm}_{i}(\epsilon_{i})}
=\frac 1{q_{i}}\pm  \frac {d\epsilon_{i}}{4r_{i}(r_{i}\pm\epsilon_{i}) },
\end{align}
fulfill the same bounds as in \eqref{eq:pairHF4} and \eqref{eq:pairHF5}. We pursue by taking $b=0$ and concentrating on the inhomogeneous Choquard-term in the nonlinearity \eqref{eq.nonlinHF}. Namely, due  to the Strichartz estimates \eqref{eq:StriBiharm}, \eqref{eq.casStrex} and structure the space $\widetilde X_{(t_{0}, \infty)}$ we can write
\begin{align}\label{eq.nnl1inh}
\sum_{j=1}^N\|(1-\Delta_{x})(u_{j}-e^{it(\Delta^{2}_{x}-\sigma_{1}\Delta_{x}+\sigma_{2}V)}u_{j,0})\|_{X_{(t_{0}, \infty)}}\\
\lesssim
\sum_{s=0}^1\left\|\sum_{j,k=1}^N b_{jk}\round{ \squad{|x|^{-(d-\gamma_1)}*(|x|^{-\rho_{1}}\Delta_{x}^{s}| u_k|^p)} |x|^{-\rho_{1}} | u_{j}|^{p-2}  u_j}\right\|_{\widetilde X_{(t_{0}, \infty)}}
\nonumber\\
+
\sum_{s=0}^1\left\|\sum_{j,k=1}^N b_{jk}\round{ \squad{|x|^{-(d-\gamma_1)}*(|x|^{-\rho_{1}}| u_k|^p)} |x|^{-\rho_{1}} \Delta_{x}^{s}(| u_{j}|^{p-2}  u_j)}\right\|_{\widetilde X_{(t_{0}, \infty)}}
\nonumber\\
+\left\|\sum_{j,k=1}^N b_{jk}\round{ \squad{|x|^{-(d-\gamma_1)}*(\nabla_{x}|x|^{-\rho_{1}}| u_k|^p)} |x|^{-\rho_{1}} | u_{j}|^{p-2}  u_j}\right\|_{L^{2}((t_{0}, \infty), L_{x}^{\frac{2d}{d+2}})}
\nonumber\\
+\left\|\sum_{j,k=1}^N b_{jk}\round{ \squad{|x|^{-(d-\gamma_1)}*(|x|^{-\rho_{1}}| u_k|^p)} \nabla_{x}|x|^{-\rho_{1}} | u_{j}|^{p-2}  u_j}\right\|_{L^{2}((t_{0}, \infty), L_{x}^{\frac{2d}{d+2}})}
\nonumber\\
+\left\|\sum_{j,k=1}^N b_{jk}\round{ \squad{|x|^{-(d-\gamma_1)}*(\nabla_{x}|x|^{-\rho_{1}}| u_k|^p)} |x|^{-\rho_{1}} | u_{j}|^{p-2}  u_j}\right\|_{L^{2}((t_{0}, \infty), L_{x}^{\frac{2d}{d+4}})}
\nonumber\\
+\left\|\sum_{j,k=1}^N b_{jk}\round{ \squad{|x|^{-(d-\gamma_1)}*(|x|^{-\rho_{1}}| u_k|^p)} \nabla_{x}|x|^{-\rho_{1}} | u_{j}|^{p-2}  u_j}\right\|_{L^{2}((t_{0}, \infty), L_{x}^{\frac{2d}{d+4}})}.
\nonumber
\end{align} 
At this point we note that 
\begin{align}\label{eq.1st2a}
  \norm{u}{\widetilde X_{(t_{0}, \infty)}}  \leq \norm{u}{ \widetilde X_{(t_{0}, \infty)}(\{|x|\leq1\})}+ \norm{u}{ \widetilde X_{(t_{0}, \infty)}\{(|x|>1\})},
\\
  \norm{u}{L^{2}((t_{0}, \infty), L_{x}^{\frac{2d}{d+2}})} 
   \leq \norm{u}{ L^{2}((t_{0}, \infty), L_{x}^{\frac{2d}{d+2}}(\{|x|\leq1\}))}+ \norm{u}{ L^{2}((t_{0}, \infty), L_{x}^{\frac{2d}{d+2}}(\{|x|>1\}))},
   \nonumber\\
  \norm{u}{L^{2}((t_{0}, \infty), L_{x}^{\frac{2d}{d+4}})} 
   \leq \norm{u}{ L^{2}((t_{0}, \infty), L_{x}^{\frac{2d}{d+4}}(\{|x|\leq1\}))}+ \norm{u}{ L^{2}((t_{0}, \infty), L_{x}^{\frac{2d}{d+4}}(\{|x|>1\}))}
   \nonumber
\end{align}
and\footnote{The embedding $$\norm{u}{ L_{x}^{\frac{2d}{d+4}}(\{|x|\leq1\})}\lesssim \norm{u}{ L_{x}^{\frac{2d}{d+2}}(\{|x|\leq1\})},$$ shows that we need the pair $( q_5,r_5)$ only in the region $|x|>1$. Anyway we  do not pursue this approach for the aim of simplicity.}
\begin{equation}\label{eq.weight}
\begin{split}
\norm{|x|^{-\rho_{1}}}{L^{a}_{x}(|x|\leq 1)}<+\infty\quad\text{if}\quad a\rho_{1}<d,
\\
\norm{|x|^{-\rho_{1}}}{L^{a}_{x}(|x|> 1)}<+\infty\quad\text{if}\quad a\rho_{1}>d.
\end{split}
\end{equation}
\end{proof}
Thus we get, by proceeding as in the proof of \eqref{eq.finest1} and applying \eqref{eq.index}, \eqref{eq.1st2a}, \eqref{eq.weight}, that 
the second term of \eqref{eq.nnl1inh} is not greater than
\begin{align}\label{eq.finest2}
 \sup_{j=1,\dots, N}\|u_{j}\|_{L^{\infty}((t_{0}, \infty), L^{r^{+}_3}_x)}^{(2p-2)(1-\theta^{+}_3)}\round{\sum_{k=1}^N 
  \|(1-\Delta_{x})u_{k}\|_{ X_{(t_{0}, \infty)}}}^{(2p-2)\theta^{+}_3+1}
  \\
  +\sup_{j=1,\dots, N} \|u_{j}\|_{L^{\infty}((t_{0}, \infty), L^{r^{-}_3}_x)}^{(2p-2)(1-\theta^{-}_3)}\round{\sum_{k=1}^N 
  \|(1-\Delta_{x})u_{k}\|_{ X_{(t_{0}, \infty)}}}^{(2p-2)\theta^{-}_3+1}
  \nonumber\\
  + \sup_{j=1,\dots, N}\|u_{j}\|_{L^{\infty}((t_{0}, \infty), L^{r^{+}_4}_x)}^{(2p-2)(1-\theta^{+}_4)}\round{\sum_{k=1}^N 
 \norm{\frac{\nabla_{x}}{(1-\Delta_{x})^{\frac 12}}(1-\Delta_{x})^{\frac 12}u_{k}}{ X_{(t_{0}, \infty)}}}^{(2p-2)\theta^{+}_4+1}
 \nonumber \\
  +\sup_{j=1,\dots, N} \|u_{j}\|_{L^{\infty}((t_{0}, \infty), L^{r^{-}_4}_x)}^{(2p-2)(1-\theta^{-}_4)}\round{\sum_{k=1}^N 
  \norm{\frac{\nabla_{x}}{(1-\Delta_{x})^{\frac 12}}(1-\Delta_{x})^{\frac 12}u_{k}}{ X_{(t_{0}, \infty)}}}^{(2p-2)\theta^{-}_4+1}
  \nonumber\\
    \nonumber\\
  + \sup_{j=1,\dots, N}\|u_{j}\|_{L^{\infty}((t_{0}, \infty), L^{r^{+}_5}_x)}^{(2p-2)(1-\theta^{+}_4)}\round{\sum_{k=1}^N 
 \norm{\frac{\nabla_{x}}{(1-\Delta_{x})^{\frac 12}}(1-\Delta_{x})^{\frac 12}u_{k}}{ X_{(t_{0}, \infty)}}}^{(2p-2)\theta^{+}_5+1}
 \nonumber \\
  +\sup_{j=1,\dots, N} \|u_{j}\|_{L^{\infty}((t_{0}, \infty), L^{r^{-}_5}_x)}^{(2p-2)(1-\theta^{-}_4)}\round{\sum_{k=1}^N 
  \norm{\frac{\nabla_{x}}{(1-\Delta_{x})^{\frac 12}}(1-\Delta_{x})^{\frac 12}u_{k}}{ X_{(t_{0}, \infty)}}}^{(2p-2)\theta^{-}_5+1}
  \nonumber\\
 \lesssim
 \sum_{i=3}^5  \sup_{j=1,\dots, N}\|u_{j}\|_{L^{\infty}((t_{0}, \infty), L^{r^{+}_i}_x)}^{(2p-2)(1-\theta^{+}_i)}\round{\sum_{k=1}^N 
  \|(1-\Delta_{x})u_{k}\|_{ X_{(t_{0}, \infty)}}}^{(2p-2)\theta^{+}_i+1}
 \nonumber \\
  + \sum_{i=3}^5 \sup_{j=1,\dots, N} \|u_{j}\|_{L^{\infty}((t_{0}, \infty), L^{r^{-}_i}_x)}^{(2p-2)(1-\theta^{-}_i)}\round{\sum_{k=1}^N 
  \|(1-\Delta_{x})u_{k}\|_{ X_{(t_{0}, \infty)}}}^{(2p-2)\theta^{-}_i+1},
  \nonumber
\end{align}
with $\theta^{\pm}_i=(q^{\pm \prime}_i-q^{\pm }_i)/(2pq^{\prime\pm}_i-2q^{\prime\pm}_i)\in (0,1)$, where we made an use of the fact that $\nabla_{x}/(1-\Delta_{x})^{\frac12}$ is a pseudo-differential operator of order $0$\footnote{Notice that one has the operators identity  $$ \frac {\nabla_{x}}{(1-\Delta_{x})^{\frac12}}=\frac{\nabla_{x}}{|\nabla_{x}|}\frac {|\nabla_{x}|}{(1-\Delta_{x})^{\frac12}}.$$} together with the natural embedding $W_{x}^{2, r}\subset W_{x}^{1,r}$. Let us take into account  the Hartree-Fock nonlinear term, that is when $b>0.$ Let us pick up for the purpose the biharmonic-admissible pairs
\begin{equation}\label{eq:pairHF7}
 ( q_6,r_6):= \left(\frac{16}{d-\gamma_2+2\rho_{2}},\frac{4d}{d+\gamma_2-2\rho_{2}}\right),
\end{equation}
\begin{equation}\label{eq:pairHF8}
 ( q_{7},r_{7}):= \left(\frac{24}{2d-(4+2 \gamma_{2}-4 \rho_{2})},\frac{6d}{d+4+2\gamma_{2}-4\rho_{2}}\right)
\end{equation}
and
\begin{equation}\label{eq:pairHF8a}
 ( q_{8},r_{8}):= \left(\frac{24}{2d-(6+2 \gamma_{2}-4 \rho_{2})},\frac{6d}{d+6+2\gamma_{2}-4\rho_{2}}\right).
\end{equation}
By arguing in like manner as above we get the extra terms 
\begin{align}\label{eq.finest3}
 \sum_{i=6}^8  \sup_{j=1,\dots, N}\|u_{j}\|_{L^{\infty}((t_{0}, \infty), L^{r^{+}_i}_x)}^{2(1-\theta^{+}_i)}\round{\sum_{k=1}^N 
  \|(1-\Delta_{x})u_{k}\|_{ X_{(t_{0}, \infty)}}}^{2\theta^{+}_i+1}
  \\
  + \sum_{i=6}^8 \sup_{j=1,\dots, N} \|u_{j}\|_{L^{\infty}((t_{0}, \infty), L^{r^{-}_i}_x)}^{2(1-\theta^{-}_i)}\round{\sum_{k=1}^N 
  \|(1-\Delta_{x})u_{k}\|_{ X_{(t_{0}, \infty)}}}^{2\theta^{-}_i+1},
  \nonumber
\end{align}
with $\theta^{\pm}_i\in (0,1)$ defined as above. Coupling \eqref{eq.nnl1inh}, \eqref{eq.finest2} and \eqref{eq.finest3}
we arrive at
\begin{align}\label{eq.finest4}
\sum_{j=1}^N\|(1-\Delta_{x})(u_{j}-e^{it(\Delta^{2}_{x}-\sigma_{1}\Delta_{x}+\sigma_{2}V)}u_{j,0})\|_{X_{(t_{0}, \infty)}}\\
\lesssim
\sum_{i=3}^5  \sup_{j=1,\dots, N}\|u_{j}\|_{L^{\infty}((t_{0}, \infty), L^{r^{+}_i}_x)}^{(2p-2)(1-\theta^{+}_i)}\round{\sum_{k=1}^N 
  \|(1-\Delta_{x})u_{k}\|_{ X_{(t_{0}, \infty)}}}^{(2p-2)\theta^{+}_i+1}
 \nonumber \\
  + \sum_{i=3}^5 \sup_{j=1,\dots, N} \|u_{j}\|_{L^{\infty}((t_{0}, \infty), L^{r^{-}_i}_x)}^{(2p-2)(1-\theta^{-}_i)}\round{\sum_{k=1}^N 
  \|(1-\Delta_{x})u_{k}\|_{ X_{(t_{0}, \infty)}}}^{(2p-2)\theta^{-}_i+1}
  \nonumber
  \\
  +\sum_{i=6}^8  \sup_{j=1,\dots, N}\|u_{j}\|_{L^{\infty}((t_{0}, \infty), L^{r^{+}_i}_x)}^{2(1-\theta^{+}_i)}\round{\sum_{k=1}^N 
  \|(1-\Delta_{x})u_{k}\|_{ X_{(t_{0}, \infty)}}}^{2\theta^{+}_i+1}
\nonumber  \\
  + \sum_{i=6}^8 \sup_{j=1,\dots, N} \|u_{j}\|_{L^{\infty}((t_{0}, \infty), L^{r^{-}_i}_x)}^{2(1-\theta^{-}_i)}\round{\sum_{k=1}^N 
  \|(1-\Delta_{x})u_{k}\|_{ X_{(t_{0}, \infty)}}}^{2\theta^{-}_i+1},
  \nonumber
\end{align} 
where 
$$
\lim _{t_{0}\rightarrow +\infty}\sum_{i=3}^8\round{\|u_{j}\|_{L^{\infty}((t_{0}, \infty), L^{r^{+}_i}_x)}+ \|u_{j}\|_{L^{\infty}((t_{0}, \infty), L^{r^{-}_i}_x)}}=0,
$$
for any $j=1,\dots,N$, by Theorem \ref{thm:desl}. This leads again  to $(u_j)_{j=1}^N\in L^q(\R, W^{2,r}(\R^d)^N)$. This ends the proof of the Lemma.

\begin{proof}[Proof of Theorem \ref{thm:scattHFC4}]
 We exploit the proof of Theorem \ref{thm:scattHFC4} covering all the different cases in a unified fashion. We start 
 by writing $\widetilde  u(t)=e^ {-it \Delta_{x}}u(t)$ and getting then from \eqref{eq:opintHFC4}

\begin{align*}
\sum_{j=1}^{N}  \round{\widetilde u_{j}(t_2)-\widetilde  u_{j}(t_1)}=  i\sum_{j,k=1}^{N} \int_{t_1}^{t_2} e^ {-i\tau \Delta_{x}}  (\Delta^2_x-\sigma_{1}\Delta_{x}+\sigma_{2} V) \mathcal F(\tau, u_j , u_k) d\tau.
\end{align*}
An use of the Strichartz estimates \eqref{eq:StriBiharm}, \eqref{eq.casStrex}, \eqref{eq.casStrexDu} along with with \eqref{eq.SobEquiv} infer
 \begin{align}\label{eq:stric1}
\sum_{j,k=1}^{N}\norm{\int_{t_1}^{t_2} e^ {-i\tau (\Delta^2_x-\sigma_{1}\Delta_{x}+\sigma_{2} V)}  \mathcal F(\tau, u_j , u_k) d\tau}{H^{2}_x}
\\
\lesssim\sum_{j,k=1}^{N}\norm{(\Delta^2_x+\sigma_{2} V)^{\frac12}\int_{t_1}^{t_2} e^ {-i\tau (\Delta^2_x-\sigma_{1}\Delta_{x}+\sigma_{2} V)}  \mathcal F(\tau, u_j , u_k) d\tau}{L^{2}_x}
\nonumber\\
+\sum_{j,k=1}^{N}\norm{\int_{t_1}^{t_2} e^ {-i\tau (\Delta^2_x-\sigma_{1}\Delta_{x}+\sigma_{2} V)}  \mathcal F(\tau, u_j , u_k) d\tau}{L^{2}_x}
\nonumber\\
\lesssim\sum_{s=0}^1\left\|\sum_{j,k=1}^N b_{jk}\Delta_{x}^{s}\round{ \squad{|x|^{-(d-\gamma_1)}*(|x|^{-\rho_{1}}| u_k|^p)} |x|^{-\rho_{1}} | u_{j}|^{p-2}  u_j}\right\|_{\widetilde X_{(t_{1}, t_{2})}}
\nonumber\\
+\rho_{1}\sum_{s=0}^1\left\|\sum_{j,k=1}^N b_{jk}\round{ \squad{|x|^{-(d-\gamma_1)}*(\nabla^{1-s}_{x}|x|^{-\rho_{1}}| u_k|^p)}\nabla^{s} |x|^{-\rho_{1}} | u_{j}|^{p-2}  u_j}\right\|_{L^{2}((t_{1}, t_{2}), L_{x}^{\frac{2d}{d+2}})}
\nonumber\\
+b\sum_{s=0}^1 \left\| \sum^N_{j,k=1}\Delta_{x}^{s}\round {\squad{|x|^{-(d-\gamma_2)}*|x|^{-\rho_{2}}| u_k|^2  } |x|^{-\rho_{2}}u_j}\right\|_{\widetilde X_{(t_{1}, t_{2})}}
\nonumber\\
+b\sum_{s=0}^1 \left\|\sum^N_{j,k=1} \Delta_{x}^{s}\round {\squad{|x|^{-(d-\gamma_2)}* |x|^{-\rho_{2}}\overline u_k u_j  } |x|^{-\rho_{2}} u_k}\right\|_{\widetilde X_{(t_{1}, t_{2})}}
\nonumber\\
+\rho_{2}b\sum_{s=0}^1 \left\| \sum^N_{j,k=1}\round {\squad{|x|^{-(d-\gamma_2)}*(\nabla_{x}^{1-s}|x|^{-\rho_{2}}| u_k|^2)  } \nabla_{x}^{s}|x|^{-\rho_{2}}u_j}\right\|_{L^{2}((t_{1}, t_{2}), L_{x}^{\frac{2d}{d+2}})}
\nonumber\\
+\rho_{2}b\sum_{s=0}^1 \left\|\sum^N_{j,k=1}\round {\squad{|x|^{-(d-\gamma_2)}* (\nabla_{x}^{1-s}|x|^{-\rho_{2}}\overline u_k u_j  }\nabla_{x}^{s} |x|^{-\rho_{2}} u_k}\right\|_{L^{2}((t_{1}, t_{2}), L_{x}^{\frac{2d}{d+2}})}
\nonumber\\
+\rho_{2}b\sum_{s=0}^1 \left\| \sum^N_{j,k=1}\round {\squad{|x|^{-(d-\gamma_2)}*(\nabla_{x}^{1-s}|x|^{-\rho_{2}}| u_k|^2)  } \nabla_{x}^{s}|x|^{-\rho_{2}}u_j}\right\|_{L^{2}((t_{1}, t_{2}), L_{x}^{\frac{2d}{d+4}})}
\nonumber\\
+\rho_{2}b\sum_{s=0}^1 \left\|\sum^N_{j,k=1}\round {\squad{|x|^{-(d-\gamma_2)}* (\nabla_{x}^{1-s}|x|^{-\rho_{2}}\overline u_k u_j  }\nabla_{x}^{s} |x|^{-\rho_{2}} u_k}\right\|_{L^{2}((t_{1}, t_{2}), L_{x}^{\frac{2d}{d+4}})}.
\nonumber
\end{align}
Then it is sufficient to show that
 \begin{align*}
\lim_{t_1, t_2\rightarrow \infty}\sum_{j=1}^{N}\|\widetilde  u_{j}(t_2)-\widetilde  u_{j}(t_1)\|_{H^{2}_x}=0,
\end{align*}
which is guaranteeed by \eqref{eq:stric1} once
 \begin{align*}
\lim_{t_1, t_2\rightarrow \infty}\sum_{j,k=1}^N b_{jk}\left\|\Delta_{x}^{s}\round{ \squad{|x|^{-(d-\gamma_1)}*(|x|^{-\rho_{1}}| u_k|^p)} |x|^{-\rho_{1}} | u_{j}|^{p-2}  u_j}\right\|_{\widetilde X_{(t_{1}, t_{2})}}
\\
=\lim_{t_1, t_2\rightarrow \infty}\sum_{j,k=1}^N b_{jk}\left\|\round{ \squad{|x|^{-(d-\gamma_1)}*(\nabla^{1-s}_{x}|x|^{-\rho_{1}}| u_k|^p)}\nabla^{s} |x|^{-\rho_{1}} | u_{j}|^{p-2}  u_j}\right\|_{L^{2}((t_{1}, t_{2}), L_{x}^{\frac{2d}{d+2}})}
\nonumber\\
=\lim_{t_1, t_2\rightarrow \infty}\sum^N_{j,k=1} \left\| \Delta_{x}^{s}\round {\squad{|x|^{-(d-\gamma_2)}*|x|^{-\rho_{2}}| u_k|^2  } |x|^{-\rho_{2}}u_j}\right\|_{\widetilde X_{(t_{1}, t_{2})}}
\nonumber\\
=\lim_{t_1, t_2\rightarrow \infty}\sum^N_{j,k=1} \left\| \Delta_{x}^{s}\round {\squad{|x|^{-(d-\gamma_2)}* |x|^{-\rho_{2}}\overline u_k u_j  } |x|^{-\rho_{2}} u_k}\right\|_{\widetilde X_{(t_{1}, t_{2})}}
\nonumber\\
=\lim_{t_1, t_2\rightarrow \infty}\sum^N_{j,k=1} \left\| \round {\squad{|x|^{-(d-\gamma_2)}*(\nabla_{x}^{1-s}|x|^{-\rho_{2}}| u_k|^2)  } \nabla_{x}^{s}|x|^{-\rho_{2}}u_j}\right\|_{L^{2}((t_{1}, t_{2}), L_{x}^{\frac{2d}{d+2}})}
\nonumber\\
=\lim_{t_1, t_2\rightarrow \infty}\sum^N_{j,k=1}\left\|\round {\squad{|x|^{-(d-\gamma_2)}* (\nabla_{x}^{1-s}|x|^{-\rho_{2}}\overline u_k u_j  }\nabla_{x}^{s} |x|^{-\rho_{2}} u_k}\right\|_{L^{2}((t_{1}, t_{2}), L_{x}^{\frac{2d}{d+2}})}
\nonumber\\
=\lim_{t_1, t_2\rightarrow \infty}\sum^N_{j,k=1} \left\| \round {\squad{|x|^{-(d-\gamma_2)}*(\nabla_{x}^{1-s}|x|^{-\rho_{2}}| u_k|^2)  } \nabla_{x}^{s}|x|^{-\rho_{2}}u_j}\right\|_{L^{2}((t_{1}, t_{2}), L_{x}^{\frac{2d}{d+4}})}
\nonumber\\
=\lim_{t_1, t_2\rightarrow \infty}\sum^N_{j,k=1}\left\|\round {\squad{|x|^{-(d-\gamma_2)}* (\nabla_{x}^{1-s}|x|^{-\rho_{2}}\overline u_k u_j  }\nabla_{x}^{s} |x|^{-\rho_{2}} u_k}\right\|_{L^{2}((t_{1}, t_{2}), L_{x}^{\frac{2d}{d+4}})}=0,
\end{align*}
for $s=0,1$ and that can be readily performed following the same lines of the proof of Lemma \ref{StriHFC4}. 
Then the proof of the theorem is completed.
\end{proof}
\section{Appendix}\label{appendix}
In this section we shall present an abstract result of independent interest. We will prove the equivalence between the classical 
interaction Morawetz estimates and the tensor Morawetz estimates, appeared systematically in the papers \cite{CGT07} and \cite{CGT}. This result is in fact employed in Section \ref{TMoraw} (identity \eqref{eq.laplt}) and could be useful in order to switch from one to another setting making the bilinear Morawetz inequalities a flexible tool.  
\begin{lemma}\label{eq.MorEqiv}
Let be $z_{j,\ell}(x,y)$ as in Lemma \ref{lem:tenmor}. Then one has the following identity
   \begin{align}\label{eq.EquivMorA}
2\Re \int_{\R^{2d}} \Delta_{x,y} z_{j,\ell}(x,y) 
    [(\Delta_{x,y} a(x,y)\overline z_{j,\ell}(x,y)]\,dxdy& \\
    \label{eq.EquivMorB}
    +4\Re \int_{\R^{2d}} \Delta_{x,y} z_{j,\ell}(x,y) 
    [(\nabla_{x},\nabla_{y}) a(x,y)\cdot (\nabla_{x},\nabla_{y}) \overline z_{j,\ell}(x,y)]\,dxdy& 
    \\
    =-2\int_{\R^{2d}}
 \Delta^2_xa(x,y) | u_j(x)|^2|u_{\ell}(y)|^2\,dxdy&
\nonumber\\
    -4 \int_{\R^{2d}}  \nabla_{x}u_{j}(x)D^2_{xy}a(x,y)\nabla_{x}\overline u_{j}(x)|u_{\ell}(y)|^2\,dxdy&
     \nonumber\\
 -4\int_{\R^{2d}} \nabla_{y}u_{\ell}(y)D^2_{xy}a(x,y)\nabla_{y}\overline u_{\ell}(y)|u_{\ell}(x)|^2\,dxdy&
  \nonumber\\
=2\int_{\R^{2d}}
 \Delta^2_xa(x,y) | u_j(x)|^2|u_{\ell}(y)|^2\,dxdy&
\nonumber\\
    -4 \int_{\R^{2d}}  \nabla_{x}u_{j}(x)D^2_{xy}a(x,y)\nabla_{x}\overline u_{j}(x)|u_{\ell}(y)|^2\,dxdy&
    \nonumber\\
 -4\int_{\R^{2d}} \nabla_{y}u_{\ell}(y)D^2_{xy}a(x,y)\nabla_{y}\overline u_{\ell}(y)|u_{\ell}(x)|^2\,dxdy&
  \nonumber\\
=2\int_{\R^{2d}}
 \Delta^2_xa(x,y) | u_j(x)|^2|u_{\ell}(y)|^2\,dxdy&
\nonumber\\
    -4 \int_{\R^{2d}}  \nabla_{x}u_{j}(x)D^2_{xy}a(x,y)\nabla_{x}\overline u_{j}(x)|u_{\ell}(y)|^2\,dxdy&
    \nonumber\\
 -4\int_{\R^{2d}} \nabla_{y}u_{\ell}(y)D^2_{xy}a(x,y)\nabla_{y}\overline u_{\ell}(y)|u_{\ell}(x)|^2\,dxdy&
 \nonumber\\
  -8 \int_{\R^{2d}}
  \Im(\overline u_{j}(x)\nabla_{x}u_{j}(x))D^2_{xy}a(x,y)
  \Im(\overline u_{\ell}(y)\nabla_{y}u_{\ell}(y))\,dxdy.&
  \nonumber
   \end{align}

\end{lemma}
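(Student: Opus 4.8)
The plan is to read the two integrals \eqref{eq.EquivMorA}--\eqref{eq.EquivMorB} as the \emph{spatial} part of a virial computation for the free Laplacian $\Delta_{x,y}=\Delta_x+\Delta_y$ in $\R^{2d}$ acting on the single function $w:=z_{j,\ell}$, and only at the very end to resolve $w=u_j(x)u_\ell(y)$ into its factors. First I would establish the purely integration-by-parts identity
\begin{align}\label{eq.planIBP}
2\Re\int_{\R^{2d}}\Delta_{x,y}w\,\Delta_{x,y}a\,\overline w\,dxdy
+4\Re\int_{\R^{2d}}\Delta_{x,y}w\,(\nabla_x,\nabla_y)a\cdot(\nabla_x,\nabla_y)\overline w\,dxdy&\\
=\int_{\R^{2d}}\Delta^2_{x,y}a\,|w|^2\,dxdy-4\Re\int_{\R^{2d}}(\nabla_x,\nabla_y)\overline w\cdot D^2_{x,y}a\cdot(\nabla_x,\nabla_y)w\,dxdy,&\nonumber
\end{align}
valid for any sufficiently regular real weight $a$. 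This follows by moving the outer derivative in $\Delta_{x,y}w=\nabla_{x,y}\cdot\nabla_{x,y}w$ onto the remaining factors; the two intermediate occurrences of $\int\Delta_{x,y}a\,|\nabla_{x,y}w|^2$ cancel, using $\Re(\overline w\,\nabla w)=\tfrac12\nabla|w|^2$ and $\Re(\partial_k w\,\partial_{kl}\overline w)=\tfrac12\partial_l|\nabla w|^2$ followed by one more integration by parts.

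Next I would specialise $a(x,y)=|x-y|$ and substitute $w=u_j(x)u_\ell(y)$. Since $a$ is a function of $x-y$ one has $\nabla_ya=-\nabla_xa$, $D^2_xa=D^2_ya=-D^2_{xy}a$ and, crucially, $\Delta^2_{x,y}a=(\Delta_x+\Delta_y)^2a=4\,\Delta^2_xa$. Because $|w|^2=|u_j(x)|^2|u_\ell(y)|^2$, the first term on the right of \eqref{eq.planIBP} becomes $4\int\Delta^2_xa\,|u_j|^2|u_\ell|^2$. For the Hessian term I would expand $(\nabla_x,\nabla_y)w=(u_\ell\nabla_xu_j,\,u_j\nabla_yu_\ell)$ against the block matrix $D^2_{x,y}a$, producing two \emph{diagonal} contributions $-4\int|u_\ell|^2\,\nabla_x\overline{u_j}\,D^2_xa\,\nabla_xu_j$ and $-4\int|u_j|^2\,\nabla_y\overline{u_\ell}\,D^2_xa\,\nabla_yu_\ell$, plus two mixed contributions carrying the factor $-D^2_xa=D^2_{xy}a$.

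The mixed contributions I would then split through $\overline{u}\,\nabla u=\tfrac12\nabla|u|^2+i\,\Im(\overline u\nabla u)$. The product of the two real parts yields the density-gradient term $2\int\nabla_x|u_j|^2\,D^2_xa\,\nabla_y|u_\ell|^2$, while the product of the two imaginary parts yields precisely the current--current interaction $-8\int\Im(\overline{u_j}\nabla_xu_j)\,D^2_{xy}a\,\Im(\overline{u_\ell}\nabla_yu_\ell)$ appearing in the last line of the statement; the remaining real--imaginary cross products vanish upon taking $\Re$. Integrating the density-gradient term by parts in $x$ and then in $y$, and using $\partial_{x_i}(D^2_xa)_{ik}=\partial_{x_k}\Delta_xa$, converts it into $-2\int\Delta^2_xa\,|u_j|^2|u_\ell|^2$; this is exactly the identity \eqref{eq:eqval}. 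Combining this $-2$ with the $+4$ coming from $\Delta^2_{x,y}a=4\Delta^2_xa$ gives the net coefficient $+2$ of $\int\Delta^2_xa\,|u_j|^2|u_\ell|^2$, and the three displayed right-hand sides record successive stages of this reduction (before and after the final integration by parts, and before relabelling $D^2_xa=-D^2_{xy}a$).

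I expect the main obstacle to be the bookkeeping in the expansion of the $2d$-dimensional Hessian quadratic form for the product $w=u_j(x)u_\ell(y)$: one must keep the diagonal blocks, the density-gradient cross term, and the current--current cross term separate, and track consistently the several sign flips introduced by $\nabla_ya=-\nabla_xa$, $D^2_{xy}a=-D^2_xa$ and $\Delta^2_{x,y}a=4\Delta^2_xa$. The analytic content beyond this is only the elementary integrations by parts behind \eqref{eq.planIBP} and \eqref{eq:eqval}.
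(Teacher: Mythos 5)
Your proposal is correct, and it reaches the identity by a genuinely different organization of the computation than the paper's. The paper expands the tensor structure from the very first line: it writes $\Delta_{x,y}z_{j,\ell}=\Delta_x u_j\,u_\ell+u_j\,\Delta_y u_\ell$, splits \eqref{eq.EquivMorA}--\eqref{eq.EquivMorB} into diagonal and cross pieces (the terms $A_1,\dots,A_4$), cancels the cross pieces of \eqref{eq.EquivMorB} against those of \eqref{eq.EquivMorA} by integration by parts, applies the $d$-dimensional one-function virial identity separately in $x$ and in $y$ to the surviving diagonal blocks, and then makes a \emph{second} pass through the cross terms, using $\Re(B_1B_2)=\Re B_1\Re B_2-\Im B_1\Im B_2$ and $\Im(\overline u\nabla u)=-\Im(u\nabla\overline u)$, to recover the current--current term in \eqref{eq.MorEquiv7}. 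You instead prove a single master virial identity for an \emph{arbitrary} function $w$ and weight $a$ on $\R^{2d}$ --- a statement the paper never isolates --- and only afterwards substitute $w=u_j(x)u_\ell(y)$, $a=|x-y|$, exploiting $\nabla_y a=-\nabla_x a$, $D^2_{xy}a=-D^2_x a$ and $\Delta^2_{x,y}a=4\Delta^2_x a$; the lemma then reduces to algebra on the $2d$-dimensional Hessian quadratic form plus the single integration by parts \eqref{eq:eqval}. What your route buys: the analytic work is done once in full generality, the tensorization is pure bookkeeping, and the signs come out unambiguously (your coefficients $+4-2=+2$ on $\int\Delta^2_x a\,|u_j|^2|u_\ell|^2$, $-4$ on the diagonal blocks written with $D^2_x a$, and $-8$ on the $\Im$--$\Im$ term written with the mixed Hessian $D^2_{xy}a=-D^2_x a$, all check). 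What the paper's route buys: it makes explicit which cross terms of the bilinear action cancel, which is the form of the statement invoked in the main text at \eqref{eq.laplt}--\eqref{eq.lapl2}. One caveat on your closing remark: the claim that the three displayed right-hand sides of the lemma ``record successive stages'' of your reduction is too charitable. In your computation the coefficient of $\int\Delta^2_x a\,|u_j|^2|u_\ell|^2$ is $+4$ before the final integration by parts (with the density-gradient term still present) and $+2$ after it; it is never $-2$, so the first displayed right-hand side is not a stage of your argument, and indeed it cannot coincide with the other two as written. That inconsistency (together with the appearance of the $\Im$--$\Im$ term only in the last expression, and the sign flip of that term relative to \eqref{eq.lapltc}) is a defect of the statement itself, not of your proof, which produces the last, internally consistent, expression.
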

\begin{proof}
A computation of $ \Delta_{x,y} z_{j,\ell}(x,y)$ yields

\begin{align}\label{eq.MorEquiv4}
 \eqref{eq.EquivMorA}=2\Re \int_{\mathbb{R}^{2d}} \Delta_{x,y} a(x,y) \overline{u}_{j}(x) \Delta_{x} u_{j}(x)|u_{\ell}(y)|^{2} \,dxdy 
\\
+2\Re \int_{\mathbb{R}^{2d}} \Delta_{x,y} a(x,y) \overline{u}_{\ell}(y) \Delta_{y}u_{\ell}(y)|u_{j}(x)|^{2} \,dxdy
\nonumber\\
=2\Re \int_{\mathbb{R}^{2d}} \Delta_{x} a(x,y) \overline{u}_{\ell}(y) \Delta_{y}u_{\ell}(y)|u_{j}(x)|^{2} \,dxdy
\nonumber\\
+2\Re \int_{\mathbb{R}^{2d}} \Delta_{y} a(x,y) \overline{u}_{\ell}(y) \Delta_{y}u_{\ell}(y)|u_{j}(x)|^{2} \,dxdy
\nonumber\\
+2\Re \int_{\mathbb{R}^{2d}} \Delta_{x} a(x,y) \overline{u}_{j}(x) \Delta_{x} u_{j}(x)|u_{\ell}(y)|^{2} \,dxdy
\nonumber\\
+2\Re \int_{\mathbb{R}^{2d}} \Delta_{y} a(x,y) \overline{u}_{j}(x) \Delta_{x} u_{j}(x)|u_{\ell}(y)|^{2} \,dxdy.
\nonumber\\
\end{align}
In addition we get
\begin{align}
\eqref{eq.EquivMorB}
\\
=4 \Re \int_{\mathbb{R}^{2d}}\nabla_{x} a(x,y)\nabla_{x} \overline{u}_{j}(x) \overline{u}_{\ell}(y)\left(\Delta_{x} u_{j}(x) u_{\ell}(y)+u_{j}(x) \Delta_{y} u_{\ell}(y)\right) \,dxdy 
\nonumber\\
+4 \Re \int_{\mathbb{R}^{2d}}\overline{u}_{j}(x) \nabla_{y} a(x,y)\nabla_{y}  \overline{u}_{\ell}(y)\left(\Delta_{x} u_{j}(x) u_{\ell}(y)+u_{j}(x) \Delta_{y} u_{\ell}(y)\right) \,dxdy 
\nonumber\\
=4 \Re \int_{\mathbb{R}^{2d}}\nabla_{x} a(x,y)\nabla_{x} \overline{u}_{j}(x) \Delta_{x} u_{j}(x)|u_{\ell}(y)|^{2}  \,dxdy 
\nonumber\\
4 \Re \int_{\mathbb{R}^{2d}} \nabla_{x} a(x,y)\nabla_{x} \overline{u}_{j}(x) \overline{u}_{\ell}(y) u_{j}(x) \Delta_{y} u_{\ell}(y) \,dxdy
\nonumber\\
4 \Re \int_{\mathbb{R}^{2d}} \nabla_{y} a(x,y)\nabla_{y} \overline{u}_{\ell}(y) \overline{u}_{j}(x) u_{\ell}(y) \Delta_{x} u_{j}(x) \,dxdy
\nonumber\\
+4 \Re \int_{\mathbb{R}^{2d}}\nabla_{y} a(x,y)\nabla_{y} \overline{u}_{\ell}(y) \Delta_{y} u_{\ell}(y)|u_{j}(x)|^{2}  \,dxdy 
\nonumber\\
=A_{1}+A_{2}+A_{3}+A_{4}.
\end{align}
It is crucial to observe that
$$
A_{2}=-2 \Re \int_{\mathbb{R}^{2d}} \Delta_{x} a(x,y) \overline{u}_{\ell}(y) \Delta_{y} u_{\ell}(y)|u_{j}(x)|^{2} \,dxdy
$$
and
$$
A_{3}=-2 \Re \int_{\mathbb{R}^{2d}} \Delta_{y} a(x,y) \overline{u}_{j}(x) \Delta_{x} u_{j}(x)|u_{\ell}(y)|^{2} \,dxdy.
$$
Then we achieve that
\begin{align}\label{eq.MorEquiv4A}
\eqref{eq.MorEquiv4}+A_{1}+A_{2}+A_{3}+A_{4} \\
 =2\Re \int_{\mathbb{R}^{2d}} (\Delta_{x} a(x,y) \overline{u}_{j}(x)+2\nabla_{x} a(x,y)\nabla_{x} \overline{u}_{j}(x) ) \Delta_{x} u_{j}(x)|u_{\ell}(y)|^{2} \,dxdy 
\nonumber\\
+2\Re \int_{\mathbb{R}^{2d}} (\Delta_{x,y} a(x,y) \overline{u}_{\ell}(y)+2\nabla_{y} a(x,y)\nabla_{y} \overline{u}_{\ell}(y)) \Delta_{y} u_{\ell}(y)|u_{j}(x)|^{2}   \,dxdy
\nonumber\\
 =\int_{\R^{2d}}
 \Delta^2_xa(x,y) | u_j(x)|^2|u_{\ell}(y)|^2\,dxdy+\int_{\R^{2d}}
\Delta^2_ya(x,y) | u_j(x)|^2|u_{\ell}(y)|^2\,dxdy&
\nonumber\\
    -4 \int_{\R^{2d}}  \nabla_{x}u_{j}(x)D^2_{xy}a(x,y)\nabla_{x}\overline u_{j}(x)|u_{\ell}(y)|^2\,dxdy&
 \nonumber\\
 -4\int_{\R^{2d}} \nabla_{y}u_{\ell}(y)D^2_{xy}a(x,y)\nabla_{y}\overline u_{\ell}(y)|u_{\ell}(x)|^2\,dxdy,&
  \nonumber
\end{align}
where the last inequality arises from standard calculations (see \cite{TarVenk}, for example).
Moreover, we have
\begin{align}\label{eq.MorEquiv5}
4 \Re \int_{\mathbb{R}^{2d}} \nabla_{y} a(x,y)\nabla_{y} \overline{u}_{\ell}(y) \overline{u}_{j}(x) u_{\ell}(y) \Delta_{x} u_{j}(x) \,dxdy
\\
=-4 \Re \int_{\mathbb{R}^{2d}} D^2_{xy} a(x,y)u_{\ell}(y) \nabla_{y} \overline{u}_{\ell}(y) \overline{u}_{j}(x) \nabla_{x} u_{j}(x) \,dxdy
\nonumber\\
-4 \Re \int_{\mathbb{R}^{2d}}  \nabla_{y} a(x,y)u_{\ell}(y) \nabla_{y} \overline{u}_{\ell}(y)  |\nabla_{x} u_{j}(x)|^{2} \,dxdy
\nonumber\\
=-4 \Re \int_{\mathbb{R}^{2d}}  \nabla_{y} a(x,y)u_{\ell}(y) \nabla_{y} \overline{u}_{\ell}(y)  |\nabla_{x} u_{j}(x)|^{2} \,dxdy
\nonumber\\
-4  \int_{\mathbb{R}^{2d}} D^2_{xy} a(x,y)\Re(\overline{u}_{\ell}(y) \nabla_{y} u_{\ell}(y))\Re( \overline{u}_{j}(x) \nabla_{x} u_{j}(x)) \,dxdy
\nonumber\\
-4\int_{\mathbb{R}^{2d}} D^2_{xy} a(x,y)\Im(\overline{u}_{\ell}(y) \nabla_{y} u_{\ell}(y))\Im( \overline{u}_{j}(x) \nabla_{x} u_{j}(x)), \,dxdy
\nonumber
\end{align}
by applying $\Im(\overline{u}_{\ell}(y) \nabla_{y} u_{\ell}(y))=-\Im(u_{\ell}(y) \nabla_{y} \overline{u}_{\ell}(y))$ and  
$\Re(B_{1}B_{2})=\Re(B_{1})\Re(B_{2})-\Im(B_{1})\Im(B_{2})$. This last identity enhances also to
\begin{align}\label{eq.MorEquiv6}
2\Re \int_{\mathbb{R}^{2d}} \Delta_{y} a(x,y) \overline{u}_{j}(x) \Delta_{x} u_{j}(x)|u_{\ell}(y)|^{2} \,dxdy
\\
=2\Re \int_{\mathbb{R}^{2d}} D^2_{xy} a(x,y) \nabla_{y}|u_{\ell}(y)|^{2} \overline{u}_{j}(x)\nabla_{x} u_{j} (x)\,dxdy
\nonumber\\
+2 \Re \int_{\mathbb{R}^{2d}}  \nabla_{y} a(x,y) \nabla_{y}|u_{\ell}(y)|^{2}  |\nabla_{x} u_{j}(x)|^{2} \,dxdy
\nonumber\\
=4 \int_{\mathbb{R}^{2d}} D^2_{xy} a(x,y)\Re (\overline{u}_{\ell}(y)\nabla_{y}u_{\ell}(y)) \Re( \overline{u}_{j}(x)\nabla_{x} u_{j} (x)) \,dxdy
\nonumber\\
+4 \Re \int_{\mathbb{R}^{2d}}  \nabla_{y} a(x,y) \Re (\overline{u}_{\ell}(y)\nabla_{y}u_{\ell}(y))  |\nabla_{x} u_{j}(x)|^{2} \,dxdy.
\nonumber
\end{align}
Finally we come to
\begin{align}
\eqref{eq.MorEquiv5}+\eqref{eq.MorEquiv6}
\\=-4\int_{\mathbb{R}^{2d}} D^2_{xy} a(x,y)\Im(\overline{u}_{\ell}(y) \nabla_{y} u_{\ell}(y))\Im( \overline{u}_{j}(x) \nabla_{x} u_{j}(x)) \,dxdy
\nonumber
\end{align}
and  by symmetry to
\begin{align}
2\Re \int_{\mathbb{R}^{2d}} \Delta_{x} a(x,y) \overline{u}_{\ell}(y) \Delta_{y}u_{\ell}(y)|u_{j}(x)|^{2} \,dxdy\\
4 \Re \int_{\mathbb{R}^{2d}} \nabla_{x} a(x,y)\nabla_{x} \overline{u}_{j}(x) \overline{u}_{\ell}(y) u_{j}(x) \Delta_{y} u_{\ell}(y) \,dxdy
\nonumber
\\=-4\int_{\mathbb{R}^{2d}} D^2_{xy} a(x,y)\Im(\overline{u}_{\ell}(y) \nabla_{y} u_{\ell}(y))\Im( \overline{u}_{j}(x) \nabla_{x} u_{j}(x)) \,dxdy.
\nonumber
\end{align}
It follows that
\begin{align}\label{eq.MorEquiv7}
\eqref{eq.MorEquiv4}+A_{1}+A_{2}+A_{3}+A_{4}\\=\eqref{eq.MorEquiv4A}
-8\int_{\mathbb{R}^{2d}} D^2_{xy} a(x,y)\Im(\overline{u}_{\ell}(y) \nabla_{y} u_{\ell}(y))\Im( \overline{u}_{j}(x) \nabla_{x} u_{j}(x)) \,dxdy.
\nonumber
\end{align}
The proof is now  completed.
 \end{proof}

\begin{remark}
We presented our argument only for the terms involving the operator $\Delta_{x}$. Similar calculations can be performed also
for the fourth-order operator  $\Delta^{2}_{x}$, with more complicated steps involved (see \cite{Ta}). It should be not surprising that the above lemma shows that
$$
\int_{\mathbb{R}^{2d}} \Im(\overline{u}_{\ell}(y) \nabla_{y} u_{\ell}(y))D^2_{xy} a(x,y)\Im( \overline{u}_{j}(x) \nabla_{x} u_{j}(x)) \,dxdy=0.
$$
This because the tensor Morawetz identities act as classical Morawetz ones and is not taking into account,
at least in the fundamental steps, of the interactive aspect of the action \eqref{eq:tenmor1}. In fact if $u_{j}, u_{\ell}$ be solutions to \eqref{eq:HFC4} in $d$ spatial dimensions we can define the tensor product $\left(u_{i} \otimes u_{\ell}\right)(t, x,y)$ for $(x,y)$ in
$$
\mathbb{R}^{2d}=\left\{\left(x, y\right): x \in \mathbb{R}^{d}, y \in \mathbb{R}^{d}\right\},
$$
by the formula
$$
\left(u_{j} \otimes u_{\ell}\right)(t, x,y)=u_{j}\left(t, x\right) u_{\ell}\left(t, y\right)
$$
and utilizing \eqref{eq.singMor} along with the equation  \eqref{eq.tenid}. The interactive inequalities are then a byproduct of this approach. At this point there are several ways one can present these estimates: as bilinear generalization of the classical Morawetz estimates 
(see \cite{PlVe}, \cite{TarVenk},  \cite{TzVi}) or as classical Morawetz estimates applied to tensors of solutions  (see \cite{Dinh} \cite{MWZ}, \cite{Ta}). 
\end{remark}

\end{document}